\documentclass[a4paper,11pt]{amsart}

\usepackage{tikz,amsmath,amssymb,amsfonts,amsthm,comment,url,enumitem,color,hyperref}
\usepackage[margin=1in]{geometry}
\usepackage[british]{babel}
\usetikzlibrary{cd}

\theoremstyle{plain}
\newtheorem{prop}{Proposition}[section]
\newtheorem{lem}[prop]{Lemma}
\newtheorem{thm}[prop]{Theorem}
\newtheorem{cor}[prop]{Corollary}
\theoremstyle{definition}
\newtheorem{defn}[prop]{Definition}
\newtheorem*{ack}{Acknowledgements}
\theoremstyle{remark}
\newtheorem*{rmk}{Remark}

\numberwithin{equation}{section}

\makeatletter
\newcommand{\tpmod}[1]{{\@displayfalse\pmod{#1}}}
\newcommand{\where}{{\color{red} WHERE?} \@latex@warning{Reference here!}}
\makeatother
\newcommand{\Stab}{\mathrm{Stab}}
\newcommand{\supp}{\mathrm{supp}}
\newcommand{\Hom}{\mathrm{Hom}}

\title{On equationally Noetherian and residually finite groups}
\author{Motiejus Valiunas}
\address{Instytut Matematyczny, Uniwersytet Wroc{\l}awski, plac Grunwaldzki 2/4, 50-384 Wroc{\l}aw, Poland}
\email{valiunas@math.uni.wroc.pl}
\date{\today}
\subjclass[2010]{20F70, 20E26}
\keywords{Equationally Noetherian groups, residually finite groups}

\begin{document}

\begin{abstract}
The aim of this paper is to compare and contrast the class of residually finite groups with the class of equationally Noetherian groups -- groups over which every system of coefficient-free equations is equivalent to a finite subsystem. It is easy to construct groups that are residually finite but not equationally Noetherian (e.g.\ the direct sum of all finite groups) or vice versa (e.g.\ the additive group $(\mathbb{Q},+)$ of the rationals). However, no explicit examples that are finitely generated seem to appear in the literature.

In this paper, we show that among finitely generated groups, the classes of residually finite and equationally Noetherian groups are similar, but neither of them contains the other. On the one hand, we show that some classes of finitely generated groups which are known to be residually finite, such as abelian-by-polycyclic groups, are also equationally Noetherian (answering a question posed by R.~Bryant in \cite{bryant}). We also give analogous results stating sufficient conditions for a fundamental group of a graph of groups to be equationally Noetherian and to be residually finite. On the other hand, we produce examples of finitely generated non-(equationally Noetherian) groups which are either residually torsion-free nilpotent or conjugacy separable, as well as examples of finitely presented equationally Noetherian groups that are not residually finite.
\end{abstract}
\maketitle
\setcounter{tocdepth}{1}
\tableofcontents

\section{Introduction and main results}

The aim of this paper is to study the relationship between the classes of equationally Noetherian groups (groups in which every system of coefficient-free equations is equivalent to a finite subsystem) and residually finite groups (subgroups of direct products of finite groups); see the next paragraph for precise definitions. Both of these classes of groups have been widely studied separately, although the relationship between them remains fairly unexplored. However, as highlighted below, there are significant parallels between the two classes (especially among finitely generated groups), and many constructions that are known to yield residually finite groups often also give groups that are equationally Noetherian, and vice versa.

To start with, let us fix our terminology. Let $G$ be a group. Given a free group $F_n$ with free basis $\{ X_1,\ldots,X_n \}$ of cardinality $n \geq 1$, an element $s \in F_n * G$ defines a \emph{word map} $s: G^n \to G$ defined by $s(g_1,\ldots,g_n) = \varphi_{g_1,\ldots,g_n}(s)$, where $\varphi_{g_1,\ldots,g_n}: F_n * G \to G$ is the homomorphism sending $X_i$ to $g_i$ and $g$ to $g$ for any $g \in G$. Given any subset $S \subseteq F_n * G$ (called a \emph{system of equations}), the \emph{solution set} of $S$ in $G$ is then defined to be
\[
V_G(S) = \{ (g_1,\ldots,g_n) \in G^n \mid s(g_1,\ldots,g_n) = 1 \text{ for all } s \in S \}.
\]
The group $G$ is said to be \emph{equationally Noetherian} (respectively \emph{strongly equationally Noetherian}) if for every $n \geq 1$ and every $S \subseteq F_n$ (respectively every $S \subseteq F_n * G$), there exists a finite subset $S_0 \subseteq S$ such that $V_G(S_0) = V_G(S)$. Recall also that if $\mathcal{F}$ is a family (or a property) of groups, then $G$ is said to be \emph{residually $\mathcal{F}$} if for every non-trivial element $g \in G$, there exists a group homomorphism $\varphi: G \to Q$ with $Q \in \mathcal{F}$ such that $\varphi(g) \neq 1$. We will mostly be interested in the case when $\mathcal{F}$ is the family of all finite groups; in this case, the notion of a residually $\mathcal{F}$ group coincides with the classical notion of a residually finite group.

\begin{rmk}
Our terminology follows \cite{gh} and is slightly non-standard, in that groups we call `strongly equationally Noetherian' are often called `equationally Noetherian' in the literature (see e.g.\ \cite{bmr}). However, although there are equationally Noetherian groups that are not strongly equationally Noetherian \cite[\S 2.2, Proposition 4]{bmr}, we will mostly be interested in finitely generated groups, and for those the two notions coincide \cite[\S 2.2, Proposition 3]{bmr}.
\end{rmk}

Examples of groups that are residually finite but not equationally Noetherian, or vice versa, are not hard to come by. For instance, any abelian group is equationally Noetherian \cite[\S 2.2, Theorem 1]{bmr}, although there are abelian groups which are not residually finite, such as the quasi-cyclic groups $\mathbb{Z}[\frac1p]/\mathbb{Z}$ or the additive group $(\mathbb{Q},+)$ of the rationals. In the other direction, a direct sum of finite groups is clearly residually finite, but the group $\bigoplus_{n=1}^\infty S_3 \wr C_n$ is not equationally Noetherian \cite[Example 3]{bmro}. However, all of these examples are not finitely generated, and constructing `easy' examples of such finitely generated groups is significantly more complicated.

One of the main goals of this paper is therefore to construct finitely generated groups which are residually finite but not equationally Noetherian, or vice versa. We do this in Proposition~\ref{p:RF=/>EN} and Theorem~\ref{t:EN=/>RF} below. However, before doing this, let us first discuss why such a construction -- and in general the existence of such finitely generated groups -- is not obvious.

\subsection{Equationally Noetherian families}

Let $\mathcal{G}$ be a family of groups. We say that $\mathcal{G}$ is an \emph{equationally Noetherian family} if for all $n \geq 1$ and all $S \subseteq F_n$, there exists a finite subset $S_0 \subseteq S$ such that for all $G \in \mathcal{G}$ we have $V_G(S_0) = V_G(S)$. It is clear that if $\mathcal{G}$ is an equationally Noetherian family, then each group $G \in \mathcal{G}$ is equationally Noetherian. The converse, however, is not true: as D.~Groves and M.~Hull showed in \cite[Example 3.15]{gh}, the family of all finite groups is not equationally Noetherian.

Our concern with equationally Noetherian families stems from the following observation, which is implicit in the work of G.~Baumslag, A.~Myasnikov and V.~Remeslennikov \cite{bmr}.
\begin{lem}[cf {\cite[Theorem B2(3)]{bmr}}] \label{l:RFEN}
Let $\mathcal{F}$ be an equationally Noetherian family, and let $\mathcal{G}$ be a family of residually $\mathcal{F}$ groups. Then $\mathcal{G}$ is an equationally Noetherian family. In particular, every element of $\mathcal{G}$ is equationally Noetherian.
\end{lem}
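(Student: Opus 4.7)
My plan is to show that the same finite subsystem $S_0$ witnessing the equationally Noetherian property for the family $\mathcal{F}$ also works for the family $\mathcal{G}$. Fix $n \geq 1$ and $S \subseteq F_n$, and use the hypothesis on $\mathcal{F}$ to produce a finite $S_0 \subseteq S$ with $V_F(S_0) = V_F(S)$ for every $F \in \mathcal{F}$. I will then argue that for every $G \in \mathcal{G}$, we have $V_G(S_0) = V_G(S)$.

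The inclusion $V_G(S) \subseteq V_G(S_0)$ is immediate from $S_0 \subseteq S$. For the reverse inclusion, I would take a tuple $(g_1,\ldots,g_n) \in V_G(S_0)$ and, arguing by contradiction, suppose there exists $s \in S$ with $s(g_1,\ldots,g_n) \neq 1$. Since $G$ is residually $\mathcal{F}$, there is a homomorphism $\varphi \colon G \to F$ with $F \in \mathcal{F}$ and $\varphi(s(g_1,\ldots,g_n)) \neq 1$. The decisive ingredient is that $s \in F_n$ is coefficient-free, which means the word map commutes with homomorphisms: for any $t \in F_n$ and any tuple $(h_1,\ldots,h_n)$ in $G$, we have $\varphi(t(h_1,\ldots,h_n)) = t(\varphi(h_1),\ldots,\varphi(h_n))$. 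Applying this to each $s' \in S_0$ shows that $(\varphi(g_1),\ldots,\varphi(g_n)) \in V_F(S_0)$, and applying it to $s$ gives $s(\varphi(g_1),\ldots,\varphi(g_n)) \neq 1$, contradicting $V_F(S_0) = V_F(S)$.

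I don't anticipate any serious obstacle: the proof is essentially a transport argument through the residual homomorphisms. The only point requiring a moment of care is that the hypothesis is only that each equation $s \in S$ lies in $F_n$ (no coefficients from $G$), so that word maps are natural with respect to all group homomorphisms out of $G$; if instead $S$ were allowed to contain elements of $F_n * G$, the argument would break because $\varphi$ need not fix the coefficients. This is exactly why the conclusion concerns the equationally Noetherian property rather than the strongly equationally Noetherian one. The last sentence of the lemma then follows by restricting attention to the singleton family $\{G\}$ for each $G \in \mathcal{G}$.
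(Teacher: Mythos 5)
Your proposal is correct and follows essentially the same transport argument as the paper's proof: choose the finite $S_0$ uniformly for $\mathcal{F}$, then push a putative witness of $V_G(S_0) \neq V_G(S)$ through a residual homomorphism to $\mathcal{F}$ and derive a contradiction there, using that coefficient-free word maps commute with homomorphisms. The only cosmetic difference is that the paper phrases the reverse inclusion as a contrapositive ($\mathbf{g} \notin V_G(S) \Rightarrow \mathbf{g} \notin V_G(S_0)$) rather than a contradiction, but the content is identical.
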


We give a proof of Lemma~\ref{l:RFEN} in Section \ref{s:enfam}. We use it to study the property of being equationally Noetherian for classes of groups that satisfy various separability properties. In particular, we consider the following:
\[
\begin{tikzcd}[column sep=small,row sep=tiny]
\parbox{1.7cm}{\centering\color{blue} residually free} \ar[r,Rightarrow] & \parbox{1.9cm}{\centering residually torsion-free nilpotent} \ar[r,Rightarrow,red] & \parbox{2.3cm}{\centering residually a finite $p$-group (any prime $p$)} \ar[r,Rightarrow] &[-0.9cm] \parbox{1.7cm}{\centering residually nilpotent} \ar[r,Rightarrow,red] &[-0.4cm] \parbox{1.7cm}{\centering residually linear} \ar[r,Rightarrow,red] & \parbox{1.7cm}{\centering residually finite} \\
&&& \parbox{3.5cm}{\centering\color{blue} residually (linear of bounded dimension)} \ar[ru,Rightarrow] & \parbox{1.7cm}{\centering conjugacy separable} \ar[ru,Rightarrow] & \parbox{1.7cm}{\centering cyclic subgroup separable} \ar[u,Rightarrow]
\end{tikzcd}
\]
(see Section~\ref{ss:intro-counterex} for definitions of conjugacy separability and cyclic subgroup separability); here red arrows show implications among finitely generated groups, whereas black arrows denote implications for any group. Indeed, (residually) free groups are residually torsion-free nilpotent \cite{magnus}, (residually) finitely generated torsion-free nilpotent groups are residually finite $p$-groups for any prime $p$ \cite{gruenberg}, and `finitely generated nilpotent' implies \cite{jennings} `finitely generated linear' implies \cite{malcevRF} `residually finite'; the other implications are trivial or easy.

In regard to this picture, we show that only groups shown in blue can be proved to be equationally Noetherian, whereas the other classes contain finitely generated non-(equationally Noetherian) examples. Our discussion can be summarised by the following result, which we present here as a corollary to Lemma~\ref{l:RFEN}. We note that most of the statements listed in Corollary~\ref{c:ENfam} are known to experts although do not seem appear in the literature in the precise form stated here; therefore, we prove them in the present paper.

\begin{cor} \label{c:ENfam}
The following families of groups are equationally Noetherian:
\begin{enumerate}[label={\textup{(\roman*)}}]
\item \label{i:cENfam-free} all free groups;
\item \label{i:cENfam-nilp} all $r$-step nilpotent groups (for any fixed integer $r \geq 1$);
\item \label{i:cENfam-lin} subgroups of $GL_r(R)$ for all commutative unital rings $R$ (for any fixed integer $r \geq 1$).
\end{enumerate}
In particular, any residually free group and any group which is residually (linear of bounded dimension) is equationally Noetherian.

The following families of groups are \emph{not} equationally Noetherian:
\begin{enumerate}[label={\textup{(\roman*)}},resume]
\item \label{i:cENfam-tfn} all finitely generated (torsion-free) nilpotent groups;
\item \label{i:cENfam-p} all finite $p$-groups (for any fixed prime $p$).
\end{enumerate}
\end{cor}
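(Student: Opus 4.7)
The strategy is to split the corollary into its positive content ((i)--(iii) and the ``in particular'' statements) and its negative content ((iv)--(v)), and to handle these by rather different methods.

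For the positive part, I would prove (iii) using the Hilbert basis theorem. Define the ``generic matrix'' ring $R_0 = \mathbb{Z}[y^{(k)}_{ij}, t^{(k)}] / (t^{(k)} \det(y^{(k)}) - 1)$ with $1 \leq i, j \leq r$ and $1 \leq k \leq n$; this is a finitely generated, hence Noetherian, commutative ring, and the generic matrices $A_k := (y^{(k)}_{ij}) \in GL_r(R_0)$ have the property that any $n$-tuple in $GL_r(R)$ over any commutative unital ring $R$ is induced by a unique ring map $R_0 \to R$ sending $A_k$ to the corresponding matrix. For each $s \in F_n$, let $I(s) \subseteq R_0$ be the ideal generated by the entries of $s(A_1, \ldots, A_n) - I$. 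By Noetherianity of $R_0$, we have $\sum_{s \in S} I(s) = \sum_{s \in S_0} I(s)$ for some finite $S_0 \subseteq S$, and a given tuple lies in $V_{GL_r(R)}(S)$ iff the induced map $R_0 \to R$ kills $\sum_{s \in S} I(s)$, iff it lies in $V_{GL_r(R)}(S_0)$. Statement (ii) follows because $N_{r,n} := F_n/\gamma_{r+1}(F_n)$ is polycyclic (by iterated central extensions with finitely generated abelian kernels) and hence Noetherian as a group, so the normal closure of $S$ in $N_{r,n}$ is generated, as a normal subgroup, by some finite $S_0 \subseteq S$; any homomorphism $F_n \to G$ with $G$ nilpotent of class $\leq r$ factors through $N_{r,n}$. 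Statement (i) is then a special case of (iii), since every $n$-generated subgroup of a free group is free of rank at most $n$ and hence embeds in $F_n \hookrightarrow SL_2(\mathbb{Z}) \leq GL_2(\mathbb{Z})$. The two ``in particular'' statements follow immediately from Lemma~\ref{l:RFEN} applied to (i) and (iii).

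For the negative statements, the plan is to exhibit, in each family, an explicit infinite system $S \subseteq F_n$ together with a sequence of groups $(G_c)_{c \geq 1}$ of unbounded complexity (e.g.\ nilpotency class) such that, for every finite $S_0 \subseteq S$, one has $V_{G_c}(S_0) \supsetneq V_{G_c}(S)$ whenever $c$ is sufficiently large. For (v), natural candidate groups are finite $p$-groups of unbounded nilpotency class, for instance $G_c = UT_{c+1}(\mathbb{F}_p)$; for (iv), the free torsion-free nilpotent groups $N_{c,2}$ of class $c$. The system must be chosen to be \emph{non-dominated}: the naive system of iterated commutators $[X_1, X_2, X_2, \ldots, X_2]$ of unbounded weight will not do, because the single relation $[X_1, X_2] = 1$ forces all of them, so $V_G(S) = V_G(\{[X_1, X_2]\})$ in every group. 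A more promising strategy is to use words designed so that the solution set splits along the filtration $\gamma_k/\gamma_{k+1}$ into independent components probing distinct basic commutators, for instance via products of $[X_1, X_2^n]$-type words arising from the Hall--Petresco collection formula.

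The main obstacle I anticipate is precisely the explicit construction in (iv) and (v). Verifying non-domination requires careful bookkeeping along the lower central series of $G_c$: for each finite $S_0$, one must exhibit a tuple in $G_c^n$ that satisfies every $s \in S_0$ but fails some $s' \in S \setminus S_0$. Once an appropriate system is identified, the argument should reduce to an explicit commutator calculation -- using torsion-freeness and the independence of basic commutators in $\gamma_k/\gamma_{k+1}$ for (iv), and additional $p$-divisibility information for (v).
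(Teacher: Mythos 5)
Your arguments for parts (i)--(iii) and the ``in particular'' statement follow the paper's approach. For (iii) the paper applies the Hilbert basis theorem to the polynomial ring $\mathbb{Z}[X^{(m)}_{k,\ell}]$, which strictly speaking requires first reducing to positive words (as in Lemma~\ref{l:+0}) so that the matrix entries of $s(A^{(1)},\ldots,A^{(n)})$ really are polynomials; your generic matrix ring $R_0$ with the determinant-inverse variables $t^{(k)}$ sidesteps that reduction, but it is the same Noetherian-ring argument. For (ii), your observation that $F_n/\gamma_{r+1}F_n$ is a Noetherian group is exactly the paper's, and for (i) the reduction to $n$-generated free subgroups embedding in $GL_2(\mathbb{Z})$ is also the paper's route (via restriction to finitely generated subgroups and the linearity of $F_2$). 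The ``in particular'' deduction via Lemma~\ref{l:RFEN} matches as well.

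For (iv) and (v), your proposal stops short, and you say so yourself. The paper does not prove these parts directly inside Section~\ref{s:enfam}: it deduces them from Lemma~\ref{l:RFEN} together with Proposition~\ref{p:RF=/>EN}(i), which constructs a finitely generated subgroup $G_1 \leq \prod_{k\geq 1} UT_{2^k}(\mathbb{Z})$ that is residually (finitely generated torsion-free nilpotent) and residually a finite $p$-group for every $p$, yet not equationally Noetherian; were either family in (iv)--(v) equationally Noetherian, Lemma~\ref{l:RFEN} would force $G_1$ to be equationally Noetherian. The hard content of that proposition is exactly the explicit construction you flag as the main obstacle, and the key device you have not found is a \emph{third} variable acting as a weight detector: the paper's system is $S = \{\,[X_1, \overbrace{X_2,\ldots,X_2}^{n}, X_3] \mid n \geq 0\,\} \subseteq F_3$, with the third letter evaluated at $I + E^{(r-1,r)} \in UT_r(\mathbb{Z})$, so that by Lemma~\ref{l:UT}(ii) the commutator vanishes unless the iterated weight equals $r-3$. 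With a two-letter system on $N_{c,2}$, as in your sketch, the free generators satisfy $s(g_1,g_2)=1$ exactly when $s \in \gamma_{c+1}F_2$, which makes it hard to satisfy a finite subsystem while failing the tail; the extra capping variable is what evades the domination problem you correctly identify, and without it the Hall--Petresco bookkeeping you gesture at would still not obviously close the gap.
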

We prove parts \ref{i:cENfam-free}, \ref{i:cENfam-nilp} and \ref{i:cENfam-lin} of Corollary~\ref{c:ENfam} in Section~\ref{s:enfam}. The `in particular' part of the Corollary then follows from parts \ref{i:cENfam-free}, \ref{i:cENfam-lin} and Lemma~\ref{l:RFEN}, whereas parts \ref{i:cENfam-tfn} and \ref{i:cENfam-p} follow from Lemma~\ref{l:RFEN} and Proposition~\ref{p:RF=/>EN}\ref{i:pRF-ut}.

Part \ref{i:cENfam-free} of Corollary~\ref{c:ENfam} is widely known and only included here for the sake of completeness.

Part \ref{i:cENfam-nilp} highlights the difference between equationally Noetherian and strongly equationally Noetherian groups: for instance, if $r \geq 2$ and $G$ is a direct sum of infinitely many non-abelian $r$-step nilpotent groups (and so $G$ is $r$-step nilpotent), then $G$ is not strongly equationally Noetherian \cite[\S 2.2, Proposition 4(1)]{bmr}, but it is equationally Noetherian.

A similar distinction is made by part \ref{i:cENfam-lin}: this should be compared to \cite[Theorem B1]{bmr}, which says that if $R$ is a commutative unital \emph{Noetherian} ring then $GL_r(R)$ is \emph{strongly} equationally Noetherian. The assumption on $R$ being Noetherian is necessary: in Lemma~\ref{l:nnsen} we give an example of a linear group that is not strongly equationally Noetherian.

Part \ref{i:cENfam-tfn} explores the limitations of parts \ref{i:cENfam-nilp} and \ref{i:cENfam-lin}. In particular, it shows that the bound on $r$ cannot be dropped in \ref{i:cENfam-nilp}; as any finitely generated torsion-free nilpotent group is a subgroup of $GL_r(\mathbb{Z})$ for some $r$ \cite{jennings}, this also shows that the bound on $r$ cannot be dropped in \ref{i:cENfam-lin}.

Finally, part \ref{i:cENfam-p} can be seen as a strengthening of \cite[Example 3.15]{gh}, which says that the family of all finite groups is not equationally Noetherian.

\subsection{Evidence for similarity} Below, we summarise known results on equationally Noetherian groups, and discuss analogous results in the setting of residually finite groups.

\begin{description}

\item[Basic constructions] It follows from the definitions that finite groups are both equationally Noetherian and residually finite, and that both of these classes are preserved under taking subgroups and direct products with finitely many terms.

\item[Abelian and linear groups] Any abelian group $G$ and any subgroup $G \leq GL_n(F)$ (for a field $F$) is strongly equationally Noetherian \cite[\S 2.2, Theorems 1 and B1]{bmr}; compare also with Corollary~\ref{c:ENfam}, parts \ref{i:cENfam-nilp} and \ref{i:cENfam-lin}. If such a group $G$ is finitely generated, then it is also residually finite: if $G$ is abelian then this follows from the Structure theorem for finitely generated abelian groups, whereas if $G$ is linear then this follows by a result of A.~I.~Mal'cev \cite{malcevRF}.

\item[Hopficity] It is well-known that all finitely generated residually finite groups $G$ are Hopfian, i.e.\ any surjective homomorphism $\varphi: G \to G$ is an isomorphism, and the same is true for finitely generated equationally Noetherian groups \cite[Corollary 3.14]{gh}. If the `finitely generated' assumption is dropped, then counterexamples to this claim exist: for instance, the infinitely generated free group $F_\infty$ is non-Hopfian but residually finite and equationally Noetherian (as it is a subgroup of the finitely generated linear group $F_2$).

\item[Finite extensions] It is easy to see that a virtually residually finite group is residually finite. The fact that a virtually (strongly) equationally Noetherian group is (strongly) equationally Noetherian is due to G.~Baumslag, A.~Myasnikov and V.~Roman'kov \cite[Theorem 1]{bmro}.

\item[Wreath products] By a result of K.~W.~Gruenberg \cite[Theorem 3.2]{gruenberg}, a regular wreath product $G \wr H$ is residually finite if and only if both $G$ and $H$ are residually finite and either $G$ is abelian or $H$ is finite. If $H$ is finite and $G$ is (strongly) equationally Noetherian, then $G^H$ is also (strongly) equationally Noetherian and hence so is its finite extension $G \wr H$. If $G$ is non-abelian and $H$ is infinite, then $G \wr H$ is not strongly equationally Noetherian \cite[Proposition 1]{bmro}, and it can be seen to be not equationally Noetherian if $H$ is not locally finite. The case when $G$ is abelian and $H$ is infinite is somewhat more complicated: see Proposition~\ref{p:RF=/>EN}\ref{i:pRF-wp} below.

\item[Abelian-by-polycyclic groups] By a result of P.~Hall \cite[Theorem 1]{hallAbNRF}, all finitely generated abelian-by-nilpotent groups are residually finite, and this has been generalised to finitely generated abelian-by-polycyclic groups by A.~V.~Jategaonkar \cite[Theorem 3]{jategaonkar}. In \cite[Corollary 3.7]{bryant}, R.~M.~Bryant proved that finitely generated abelian-by-nilpotent groups are equationally Noetherian, and Theorem~\ref{t:abpolyc} below generalises this to abelian-by-polycyclic groups.

\item[Rigid soluble groups] A group $G$ is said to be \emph{rigid} if it possesses a chain $G = G_0 \geq G_1 \geq \cdots \geq G_r = \{1\}$ of normal subgroups such that each quotient $G_i/G_{i+1}$ is abelian and torsion-free as a $\mathbb{Z}[G/G_i]$-module. Such groups are equationally Noetherian by a result of N.~S.~Romanovskii \cite[Theorem 1]{roman}. It is also known \cite[Proposition 5.9]{mr} that any finitely generated rigid group embeds in an iterated wreath product $\mathbb{Z}^m \wr (\mathbb{Z}^m \wr (\cdots (\mathbb{Z}^m \wr \mathbb{Z}^m) \cdots ))$, and so finitely generated rigid groups are residually finite by the wreath product case discussed above.

\item[Free products] If $G$ and $H$ are residually finite groups, it is well-known that $G * H$ is residually finite. A significantly more difficult result, due to Z.~Sela \cite[Theorem 9.1]{sela}, shows that if $G$ and $H$ are equationally Noetherian then so is $G * H$.

\item[Graph products] Let $\Gamma$ be a finite simple graph and let $\mathcal{G} = \{ G_v \mid v \in V(\Gamma) \}$ be a collection of groups. If each $G_v$ is residually finite, then, by a result of E.~Green \cite[Corollary 5.4]{green}, so is the graph product $\Gamma\mathcal{G}$. If the graph $\Gamma$ has girth at least $6$ and if each $G_v$ is equationally Noetherian, then a result of the author \cite[Theorem E]{me4} tells that $\Gamma\mathcal{G}$ is equationally Noetherian.

\item[$\mathcal{G}$-limit groups] Even though the family of all finite groups is not equationally Noetherian \cite[Example 3.15]{gh}, it contains large equationally Noetherian subfamilies: see Corollary~\ref{c:ENfam}\ref{i:cENfam-lin}, for instance. Given such a subfamily $\mathcal{G}$, one may consider $\mathcal{G}$-limit groups; see Section~\ref{s:limit} for a precise definition. These (finitely generated) groups will be both equationally Noetherian and residually finite (by Proposition~\ref{p:limit} below), and cover many of the examples given above.

\item[Hyperbolic and relatively hyperbolic groups] A result of Z.~Sela \cite[Theorem 1.22]{selahyp} says that a torsion-free word hyperbolic group $G$ is equationally Noetherian. This result has been generalised to any hyperbolic group $G$ (C.~Reinfeldt and R.~Weidmann \cite[Corollary 6.13]{rw}), and more recently to any group $G$ that is hyperbolic relative to a collection of equationally Noetherian subgroups (D.~Groves and M.~Hull \cite[Theorem D]{gh}).

A very well-known open question, asked by M.~Gromov \cite{gromov87}, is whether all hyperbolic groups are residually finite. By a result of D.~Osin \cite[Corollary 1.8]{osin07}, this is equivalent to asking whether any group hyperbolic relative to a collection of residually finite subgroups is residually finite.

\end{description}

As mentioned above, we generalise a result of R.~M.~Bryant \cite[Corollary 3.7]{bryant} showing that finitely generated abelian-by-nilpotent groups are equationally Noetherian. This also answers a question posed by Bryant in \cite{bryant}.
\begin{thm} \label{t:abpolyc}
Any abelian-by-polycyclic group is equationally Noetherian.
\end{thm}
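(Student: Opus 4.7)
The plan is to extend Bryant's proof for the abelian-by-nilpotent case \cite[Corollary 3.7]{bryant} by substituting Noetherianity of $\mathbb{Z}[N]$ for nilpotent $N$ with Hall's theorem that $\mathbb{Z}[Q]$ is left and right Noetherian whenever $Q$ is polycyclic. First I would reduce to the case that $G$ is finitely generated; this is standard (an infinite strictly descending chain of solution sets $V_G(S_k)$ lives in a countable subgroup, and one can arrange the witnessing subgroup to be finitely generated and still abelian-by-polycyclic, with abelian normal kernel still finitely generated as a $\mathbb{Z}[Q]$-module thanks to Hall's lemma). With $G$ finitely generated, fix an abelian normal $A \trianglelefteq G$ with $Q = G/A$ polycyclic, so that $A$ is a finitely generated $\mathbb{Z}[Q]$-module under conjugation.

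Next, I would decompose the problem along $1 \to A \to G \to Q \to 1$. Fixing a set-theoretic section $\sigma: Q \to G$, every $g \in G$ writes uniquely as $a\,\sigma(q)$ with $a \in A$ and $q \in Q$, and expanding a word $w(X_1,\ldots,X_n) \in F_n$ on such a tuple splits the equation $w=1$ into (i) a word equation $\bar w(\bar g_1,\ldots,\bar g_n)=1$ in $Q$, and, when (i) holds, (ii) a single $\mathbb{Z}[Q]$-linear equation on $(a_1,\ldots,a_n) \in A^n$ whose coefficients depend on the $\bar g_i$. Step (i) is handled by Corollary~\ref{c:ENfam}\ref{i:cENfam-lin}: a polycyclic group is linear (Auslander--Swan), so $Q$ is equationally Noetherian, and finitely many equations $\bar w=1$ already cut out $V_Q(\bar S) \subseteq Q^n$.

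The main step is handling the $A$-equations uniformly across the (generally infinite) set of $Q$-tuples in $V_Q(\bar S)$. The trick is to package the coefficients of all $w \in S$ simultaneously: view them as elements of a finitely generated $\mathbb{Z}[Q^n]$-module built from a suitable power of $A$, with $Q^n$ acting coordinate-wise. Since $Q^n$ is polycyclic, $\mathbb{Z}[Q^n]$ is Noetherian by Hall, so the submodule generated by the coefficient vectors of all $w \in S$ is finitely generated — say by those arising from a finite subset $S_0 \subseteq S$. Combining $S_0$ with the finitely many equations extracted in step (i) produces a finite subsystem with $V_G(S_0)=V_G(S)$. The main obstacle will be setting up this module-theoretic encoding rigorously: because $G$ is non-abelian, evaluating $w$ on $(a_i\sigma(\bar g_i))_i$ requires repeatedly commuting $A$-entries past $\sigma$-elements via the $Q$-action on $A$ and the cocycle of $\sigma$, and one must verify that after these manipulations the residual condition on $(a_1,\ldots,a_n)$ really is $\mathbb{Z}[Q^n]$-linear so that Noetherianity may be invoked.
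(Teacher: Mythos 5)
Your high-level strategy is the right one: split the equation $w=1$ over the extension $1 \to A \to G \to P \to 1$ into a $P$-part (handled by linearity of polycyclic groups, as in Corollary~\ref{c:ENfam}\ref{i:cENfam-lin}) and a $\mathbb{Z}P$-linear $A$-part (to be handled by Hall's theorem that $\mathbb{Z}Q$ is Noetherian for polycyclic $Q$). But the proposal has a genuine gap precisely at the step you flag as the ``main obstacle''. For a fixed positive word $s$ the $A$-condition is $\sum_j a_j\cdot s_j(p_1,\ldots,p_n)=0$, and the coefficient vector $\bigl(s_1(p_1,\ldots,p_n),\ldots,s_n(p_1,\ldots,p_n)\bigr)\in(\mathbb{Z}P)^n$ \emph{depends on the tuple} $(p_1,\ldots,p_n)$. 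Packaging these as elements of ``a finitely generated $\mathbb{Z}[Q^n]$-module built from a suitable power of $A$, with $Q^n$ acting coordinate-wise'' does not work: the coefficients live in $\mathbb{Z}P$, not $A$, and there is no coordinate-wise $Q^n$-action making the assignment $s \mapsto \bigl(s_j(p_1,\ldots,p_n)\bigr)_{j,\,(p_i)}$ land in a finitely generated module, so Noetherianity of $\mathbb{Z}[Q^n]$ cannot be invoked as described. The paper's key ingredient, which is missing from your proposal, is that the $s_j$ are specialisations of universal coefficients $\bar s_j\in\mathbb{Z}F_n$, and that there exists a \emph{single} polycyclic group $Q$ together with a surjection $\varepsilon:F_n\to Q$ through which \emph{every} homomorphism $F_n\to P$ factors (see \cite[Lemma 3.1]{gw1}, \cite[Example 2.2]{gw2}). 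This lets one push the coefficient vectors into $(\mathbb{Z}Q)^n$ uniformly in $(p_1,\ldots,p_n)$; then $\mathbb{Z}Q$ is Noetherian by Hall, the submodule they generate is finitely generated, and the finite subsystem $S_2$ falls out. This factorisation is the nontrivial step, and without it the argument does not close.

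Two smaller points. First, the reduction to $G$ finitely generated is both unnecessary and, as stated, flawed: a non--equationally-Noetherian group need not contain a finitely generated non--equationally-Noetherian subgroup (this is exactly the distinction between a group being equationally Noetherian and a family being equationally Noetherian). You could repair it via \cite[Lemma 3.11]{gh} by proving the stronger claim that a suitable family of finitely generated groups is equationally Noetherian, but the paper's proof simply does not need it: the finite subsystem $S_1\cup S_2$ it produces depends only on $n$, $S$ and $P$, not on $A$. Second, rather than fixing a set-theoretic section (and then carrying a cocycle through the word manipulations), the paper first embeds $G$ into the unrestricted wreath product $A^P\rtimes P$ (Neumann) to reduce to the split case, and then reduces to \emph{positive} coefficient-free equations via Lemma~\ref{l:+0}, which eliminates inverses and makes the rewriting $s(a_1p_1,\ldots,a_np_n)=s(p_1,\ldots,p_n)\cdot\prod \hat a_i^{\hat p_i\cdots\hat p_k}$ clean. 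These are not gaps in your plan, but they explain why the paper's version avoids the bookkeeping you anticipate.
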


For $\mathcal{G}$-limit groups, we prove the following.
\begin{prop} \label{p:limit}
Let $\mathcal{G}$ be an equationally Noetherian family of groups, and let $L$ be a $\mathcal{G}$-limit group. Then $L$ is equationally Noetherian. Moreover, if all the groups in $\mathcal{G}$ are finite, then $L$ is residually finite.
\end{prop}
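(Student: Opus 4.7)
The plan is to reduce both statements to the observation that $L$ is residually $\mathcal{G}$, i.e., that for every $g \in L \setminus \{1\}$ there exists a homomorphism $\varphi \colon L \to G$ with $G \in \mathcal{G}$ and $\varphi(g) \neq 1$. The first step is to verify this property directly from the definition of a $\mathcal{G}$-limit group given in Section~\ref{s:limit}; I expect this to be essentially immediate, since the informal meaning of ``$\mathcal{G}$-limit'' is precisely that $L$ can be approximated by members of $\mathcal{G}$ in a way that detects the non-triviality of individual elements.

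Once residual $\mathcal{G}$-ness has been established, the first assertion of the proposition follows at once from Lemma~\ref{l:RFEN} applied to the singleton family $\{L\}$: this family consists of residually $\mathcal{G}$ groups, so the lemma yields that $\{L\}$ is an equationally Noetherian family and, in particular, that $L$ itself is equationally Noetherian. The ``moreover'' clause is then even more direct: if every member of $\mathcal{G}$ is finite, then being residually $\mathcal{G}$ is literally the definition of being residually finite.

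The main obstacle is therefore the initial reduction to residual $\mathcal{G}$-ness, whose difficulty depends on the precise formulation of the definition in Section~\ref{s:limit}. If $\mathcal{G}$-limit groups are defined via a discriminating sequence of homomorphisms $\varphi_i \colon L \to G_i \in \mathcal{G}$ with $\bigcap_i \ker \varphi_i = \{1\}$, then the property is essentially built into the definition. If instead the definition proceeds via a Chabauty-type topology on the space of marked groups, I would need to extract a suitable homomorphism $L \to G_i$ from the approximating sequence (possibly after replacing $G_i$ by an appropriate quotient), a step that should be routine provided $\mathcal{G}$ is closed under taking an appropriate class of quotients. In either case, the argument is a short unpacking of the definition and does not require additional machinery beyond Lemma~\ref{l:RFEN}.
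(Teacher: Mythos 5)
Your overall plan is structurally sound: if $L$ were residually $\mathcal{G}$, then Lemma~\ref{l:RFEN} would give the equational Noetherianity, and the residual finiteness claim when $\mathcal{G}$ consists of finite groups would be immediate from the definitions. The gap is that establishing residual $\mathcal{G}$-ness is emphatically not ``essentially immediate'' from the definition in Section~\ref{s:limit}, and it is exactly where the equational Noetherianity hypothesis on $\mathcal{G}$ does its work. The definition in the paper is $L = G/\ker^\omega(\varphi_j)$ with $G$ finitely generated and $\varphi_j: G \to G_j$, $G_j \in \mathcal{G}$. This is not the discriminating-sequence formulation you describe: the $\varphi_j$ are defined on $G$, not on $L$, and there is no reason whatever for $\ker^\omega(\varphi_j) \subseteq \ker(\varphi_j)$ for a particular $j$, hence no reason for $\varphi_j$ to factor through the quotient map $\varphi_\infty: G \to L$. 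The Remark immediately after Proposition~\ref{p:limit} shows this is not a pedantic point: for a nonabelian finite group $G$, the group $G \wr \mathbb{Z}$ is a $\mathcal{G}$-limit group for the family $\mathcal{G} = \{G \wr C_n \mid n \geq 1\}$ of finite groups, yet $G \wr \mathbb{Z}$ is not residually finite, hence a fortiori not residually $\mathcal{G}$. So the equational Noetherianity of $\mathcal{G}$ is essential to your first step, and nontrivial machinery is required, contrary to your closing claim that nothing beyond Lemma~\ref{l:RFEN} is needed.

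The result that rescues you is \cite[Theorem~3.6]{gh} (compare Theorem~\ref{t:qae} with $A = \{1\}$): since $\mathcal{G}$ is an equationally Noetherian family and $G$ is finitely generated, one has $\ker(\varphi_\infty) = \ker^\omega(\varphi_j) \subseteq \ker(\varphi_j)$ for $\omega$-almost every $j$. Coupled with the observation that $g \notin \ker^\omega(\varphi_j)$ forces $\varphi_j(g) \neq 1$ $\omega$-almost surely (this uses that $\omega$ is an ultrafilter), finite additivity of $\omega$ yields a single index $j$ for which $\varphi_j$ factors through $\varphi_\infty$ \emph{and} separates $\varphi_\infty(g)$ from the identity; this is precisely the proof of Lemma~\ref{l:limit-RF}. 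With this filled in, your argument is correct. Note, however, that the paper proves the equational Noetherianity half (Lemma~\ref{l:limit-EN}) by a direct ultrafilter computation that bypasses residual $\mathcal{G}$-ness and does not use finite generation of $G$, which is strictly more general, as the Remark following that lemma points out.
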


We prove Theorem~\ref{t:abpolyc} in Section~\ref{s:abpolyc}, and Proposition~\ref{p:limit} in Section~\ref{s:limit}.

\begin{rmk}
The group $L$ in Proposition~\ref{p:limit} would not need be residually finite if we dropped the assumption on $\mathcal{G}$ being equationally Noetherian. For example, if $G$ is a non-abelian finite group then the group $L = G \wr \mathbb{Z}$ is a $\mathcal{G}$-limit group for the family $\mathcal{G} = \{ G \wr C_n \mid n \geq 1 \}$ of finite groups, but $L$ is not residually finite \cite[Theorem 3.2]{gruenberg}. Indeed, it is even possible that a $\mathcal{G}$-limit group (where $\mathcal{G}$ is a family of finite groups) is non-Hopfian: see \cite[Proof of Example 3.15]{gh}.
\end{rmk}

Given the results above, one might be tempted to conjecture that, among finitely generated groups, the classes of equationally Noetherian and residually finite groups coincide. However, the following arguments show that this is not the case.

\subsection{The counterexamples}
\label{ss:intro-counterex}

We now demonstrate how one can construct groups that are residually finite but not equationally Noetherian, or vice versa. To construct a residually finite group that is not equationally Noetherian, we use the following result.

\begin{thm}[{J.~S.~Wilson \cite[Theorem A]{wilson}}] \label{t:wilsonRF}
Every countable residually finite group $G$ embeds in a $2$-generator residually finite group $\widehat{G}$, which may be chosen so that if $G$ is soluble, residually soluble or residually nilpotent, then so is $\widehat{G}$.
\end{thm}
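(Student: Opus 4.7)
The plan is to adapt the classical B.\,H.~Neumann--H.~Neumann embedding theorem, which embeds an arbitrary countable group $G = \{g_0, g_1, g_2, \ldots\}$ into a $2$-generator group by realising it inside the restricted wreath product $W = G \wr \mathbb{Z}$ with $\mathbb{Z} = \langle t \rangle$: one takes $a := t$ together with an element $b \in \bigoplus_{i \in \mathbb{Z}} G$ encoding the enumeration of $G$ (for instance $b(i) = g_i$ for $i \geq 0$), and shows by manipulating the conjugates $a^{-i} b a^i$ that the subgroup $\langle a, b \rangle \leq W$ contains an isomorphic copy of $G$. The first step of the proof is to set up such a construction, or a close variant, so that a $2$-generator overgroup of $G$ is available in principle.

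The principal obstacle is that $W$ itself is \emph{not} residually finite when $G$ is non-abelian, by Gruenberg's theorem cited in the excerpt, so the $2$-generator subgroup $\langle a, b \rangle$ of $W$ cannot serve as $\widehat{G}$ directly. My approach would be to carry out a parallel construction inside an inverse system of finite groups. Since $G$ is residually finite, fix a cofinal family of surjections $\rho_n \colon G \twoheadrightarrow Q_n$ onto finite groups whose kernels intersect trivially; for each $n$, perform a finite analogue of the Neumann--Neumann construction inside $Q_n \wr C_{m_n}$, where $C_{m_n}$ is a cyclic group of order large enough to encode $\rho_n(\{g_0, \ldots, g_{n-1}\})$ faithfully in the base. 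This produces a $2$-generator finite group $H_n$ containing an isomorphic copy of $\rho_n(G)$, and with compatible choices one obtains bonding maps $H_{n+1} \twoheadrightarrow H_n$ so that $\widehat{G}$ can be extracted from $\varprojlim_n H_n$ as a residually finite group into which $G$ embeds diagonally.

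The hardest step is to arrange that the inverse limit really yields a discretely $2$-generated group rather than merely a topologically $2$-generated one, and that the natural map $G \to \widehat{G}$ is injective (which requires every non-trivial $g \in G$ to survive in some $H_n$). This is what makes the proof delicate: the surjections $\rho_n$, the orders $m_n$, and the generating pairs of each $H_n$ must all be coordinated so that the generators lift consistently through the bonding maps and pick out the diagonal copy of $G$.

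For the refinement concerning solubility, residual solubility, and residual nilpotency, one must additionally arrange that each $H_n$ lies in the appropriate class. Wreath products of soluble groups by soluble groups are soluble, so for the soluble and residually soluble cases one chooses the $Q_n$ soluble and the extracted $\widehat{G}$ inherits the property automatically. The residually nilpotent case is more subtle, since wreath products of nilpotent groups are generally not nilpotent; here one would replace the cyclic base $C_{m_n}$ by a suitably chosen nilpotent group (for example a $p$-group, whenever $G$ is residually a finite $p$-group) and exploit the fact that $p$-groups are closed under wreath products, so that each $H_n$ remains nilpotent and $\widehat{G}$ inherits residual nilpotency.
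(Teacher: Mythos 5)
The paper does not prove this result: it is quoted from Wilson's paper \cite{wilson} (Theorem~A there), so there is no in-paper proof to compare your sketch against. Assessing your proposal on its own terms, there is a genuine gap precisely at the step you yourself flag as ``delicate''. You construct a sequence of $2$-generator finite groups $H_n$ with generating pairs $(a_n,b_n)$ and propose to set $\widehat G = \langle \hat a,\hat b\rangle \le \varprojlim_n H_n$. For the diagonal image of a fixed $g \in G$ to lie in $\widehat G$, you would need a \emph{single} word $w_g \in F_2$ with $w_g(a_n,b_n) = \rho_n(g)$ for all large $n$. But in a Neumann--Neumann-type encoding, the word recovering $g_k$ depends on its position in the enumeration, and the finite truncations at level $n$ must encode a growing initial segment $\rho_n(\{g_0,\dots,g_{n-1}\})$ inside a base group $C_{m_n}$ whose order changes with $n$; it is not at all clear, and you do not argue, that the bonding maps $H_{n+1}\twoheadrightarrow H_n$ can simultaneously send $(a_{n+1},b_{n+1})\mapsto(a_n,b_n)$ and cohere with the encodings so that the same word works at every level. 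Without this, the ``natural map $G\to\widehat G$'' you invoke does not exist: the diagonal copy of $G$ sits in $\varprojlim_n H_n$ but need not lie inside the discrete subgroup generated by $\hat a,\hat b$, and the argument collapses at its central point. (By contrast, the concern about ``discretely versus topologically $2$-generated'' is a red herring: $\langle\hat a,\hat b\rangle$ is discretely $2$-generated by fiat; the real problem is whether it contains $G$.)

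There is also a gap in the refinement. You propose to handle residual nilpotency by replacing $C_{m_n}$ with a $p$-group whenever $G$ is residually a finite $p$-group. But the hypothesis is only that $G$ is residually nilpotent (and residually finite), which is strictly weaker than being residually a finite $p$-group for a fixed prime $p$, so this case is not covered. One also cannot simply take the $Q_n$ to be arbitrary finite nilpotent groups, since wreath products of nilpotent groups by nilpotent groups are generally not nilpotent (already $C_2\wr C_3$ fails to be nilpotent). Some more careful device is needed to ensure $\widehat G$ is residually nilpotent when $G$ is, and this too is left unaddressed.
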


Thus, if $G$ is a countable soluble residually finite group that is not equationally Noetherian -- for instance, we could take $\bigoplus_{n=1}^\infty S_3 \wr C_n$ as discussed above -- then, by Theorem~\ref{t:wilsonRF}, $G$ is a subgroup of a $2$-generator soluble residually finite group $\widehat{G}$. As equational Noetherianity is preserved under taking subgroups, it follows that $\widehat{G}$ is not equationally Noetherian.

Here we construct finitely generated groups that are not equationally Noetherian and possess separability properties that are stronger than residual finiteness. Recall that a subset $A \subseteq G$ of a group $G$ is said to be \emph{separable} in $G$ if for any $g \in G \setminus A$, there exists a group homomorphism $\varphi: G \to Q$ such that $Q$ is finite and $\varphi(g) \notin \varphi(A)$. A group $G$ is then said to be \emph{conjugacy separable} (respectively \emph{cyclic subgroup separable}) if any conjugacy class (respectively any cyclic subgroup) is separable in $G$. As a group $G$ is residually finite precisely when $\{1\}$ is separable in $G$, it follows that all conjugacy separable groups and all cyclic subgroup separable groups are residually finite.

\begin{prop} \label{p:RF=/>EN} ~
\begin{enumerate}[label={\textup{(\roman*)}}]
\item \label{i:pRF-ut} There exists a finitely generated subgroup $G_1$ of $\prod_{r=1}^\infty UT_r(\mathbb{Z})$ that is not equationally Noetherian, where $UT_r(\mathbb{Z}) \leq GL_r(\mathbb{Z})$ is the subgroup of upper unitriangular matrices. In particular, $G_1$ is residually torsion-free nilpotent and so residually a finite $p$-group for any fixed prime $p$.
\item \label{i:pRF-wp} The group $G_2 = G \wr H$, where $G$ is a non-trivial finitely generated abelian group and $H$ is a finitely generated group that has an infinite locally finite subgroup, is not equationally Noetherian. In particular, if $H$ is cyclic subgroup separable and conjugacy separable, then so is $G_2$.
\end{enumerate}
\end{prop}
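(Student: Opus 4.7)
The plan for both parts is to exhibit an explicit system of equations $S \subseteq F_n$ in the group in question which has no equivalent finite subsystem.

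For part~\ref{i:pRF-ut}: by Jennings's embedding~\cite{jennings}, for each $c \geq 1$ the free $2$-generated nilpotent group of class $c$ embeds in $UT_{r_c}(\mathbb{Z})$ for some $r_c$; fix such embeddings, giving pairs $(a_c, b_c) \in UT_{r_c}(\mathbb{Z})^2$. The obvious diagonal subgroup $\langle (a_c)_c, (b_c)_c\rangle \leq \prod_c UT_{r_c}(\mathbb{Z})$ is isomorphic to $F_2$ (since $F_2$ is residually nilpotent), and $F_2$ is equationally Noetherian by Corollary~\ref{c:ENfam}\ref{i:cENfam-free}, so the plain diagonal cannot serve as $G_1$. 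The plan is therefore to construct $G_1$ as a finitely generated subgroup of $\prod_r UT_r(\mathbb{Z})$ containing, in addition to such a diagonal, ``coordinate-isolating'' elements whose behaviour in each coordinate encodes a witness pair for a particular commutator equation. Finite generation while preserving enough independence across coordinates typically requires a Wilson-type embedding (as in Theorem~\ref{t:wilsonRF}) refined to preserve residual torsion-free nilpotency, applied to a suitable countable residually torsion-free nilpotent non-equationally Noetherian group. The system $S \subseteq F_2$ should be built from commutator words chosen so that the solution sets $V_{G_1}(s_k)$ are not totally ordered, in order to avoid the immediate collapse $V_{G_1}(s_2) \subseteq V_{G_1}(s_k)$ typical for standard weight-$k$ Engel commutators. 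With such a system, for each finite $S_0 \subseteq S$ one exhibits a witness tuple in $V_{G_1}(S_0) \setminus V_{G_1}(S)$ using the coordinate-isolating elements. The residual statements are then automatic: $G_1 \leq \prod_r UT_r(\mathbb{Z})$ is residually torsion-free nilpotent, hence residually a finite $p$-group for every prime $p$ by~\cite{gruenberg}.

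For part~\ref{i:pRF-wp}: since $L$ is infinite and locally finite, fix an ascending chain $F_1 < F_2 < \cdots$ of finite subgroups of $L$ with $|F_k| \to \infty$ and elements $\ell_k \in F_k \setminus F_{k-1}$. In $G_2 = G \wr H$, the commutator identity $[(f, 1), (0, h)] = (h^{-1} \cdot f - f, 1)$ for a finitely supported $f : H \to G$ translates equations into module-theoretic conditions on $f$ in the base group $G^{(H)}$. A system $S \subseteq F_2$ is built from words encoding $\langle \ell_k\rangle$-invariance conditions on a test function $f$; direct choices such as $s_k = [X, Y^{|F_k|}]$ collapse under divisibility (since $|F_{k-1}| \mid |F_k|$ gives $V(s_1) \subseteq V(s_k)$), so the system must instead exploit the independence of successive ``layers'' $F_k / F_{k-1}$ (for example by choosing $s_k$ to involve products of conjugates that separate $\ell_k$ from the previously used $\ell_1, \ldots, \ell_{k-1}$). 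For each finite $S_0 \subseteq S$ with maximum index $K$, a tuple $(X, Y)$ with $Y$ having $H$-part in $F_K$ and $X$ supported so as to satisfy exactly the first $K$ equations then witnesses $V_{G_2}(S_0) \setminus V_{G_2}(S)$. The ``in particular'' statement on conjugacy and cyclic subgroup separability follows from standard preservation results for wreath products with finitely generated abelian base, exploiting the transparent structure of centralisers and conjugacy classes in $G \wr H$.

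The main obstacle in part~\ref{i:pRF-ut} is achieving finite generation of $G_1$ while avoiding collapse to $F_2$; the plain diagonal is equationally Noetherian, and any additional ``isolating'' generators must be packaged into a finitely generated group, which typically requires a careful Wilson-type embedding. The main obstacle in part~\ref{i:pRF-wp} is the bounded-exponent case of $L$ (where no individual element has arbitrarily large order), in which the system cannot simply use powers of a single element and must genuinely exploit the increasing finite subgroups $F_k$, including avoiding the divisibility-based collapse of naive exponent-based systems.
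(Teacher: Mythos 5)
Your proposal for both parts is a plan rather than a proof, and in each case the key construction is missing; moreover the approach you gesture at for \ref{i:pRF-ut} would not actually establish the residual properties claimed.

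For part~\ref{i:pRF-ut}, the Wilson-type embedding route has a concrete problem: Theorem~\ref{t:wilsonRF} as stated preserves ``soluble'', ``residually soluble'' and ``residually nilpotent'', but \emph{not} ``residually torsion-free nilpotent''. So embedding a countable residually torsion-free nilpotent non-(equationally Noetherian) group into a $2$-generator group via Wilson does not give a group lying in $\prod_r UT_r(\mathbb{Z})$, nor one that is residually a finite $p$-group for every $p$. The paper instead gives a completely explicit $3$-generator subgroup $G_1 = \langle\mathbf{A},\mathbf{B},\mathbf{C}\rangle$ of $\prod_k UT_{2^k}(\mathbb{Z})$, where in the $r=2^k$ coordinate $\mathbf{A}$ is $I+E^{(1,2)}$, $\mathbf{B}$ is $I+\sum_i E^{(i,i+1)}$ and $\mathbf{C}$ is $I+E^{(r-1,r)}$. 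The system is $s_n = [X_1,\underbrace{X_2,\ldots,X_2}_{n},X_3]$, and the witnesses are $(\mathbf{A}_m,\mathbf{B},\mathbf{C})$ where $\mathbf{A}_m = [\mathbf{A},\mathbf{B},\ldots,\mathbf{B}]$ ($m$ copies). The calculation in Lemma~\ref{l:UT} gives $s_n(\mathbf{A}_m,\mathbf{B},\mathbf{C})=1$ iff $n+m+3$ is not a power of $2$, which is exactly the non-monotone, non-totally-ordered behaviour you correctly observe is needed -- but which you do not construct. You also correctly note that the diagonal alone gives $F_2$ and is useless; the three-generator construction with the ``top-right corner tester'' $\mathbf{C}$ is the step that is genuinely missing from your sketch.

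For part~\ref{i:pRF-wp}, your correctly diagnosed obstacle (no single element of the locally finite subgroup has arbitrarily large order, so plain powers collapse) is precisely what the paper overcomes, but your proposal stops short of the actual idea. The paper fixes a strictly ascending chain $\{1\}=K_0 \lneq K_1 \lneq \cdots$ of finite subgroups of the locally finite subgroup $K$ (intersected with a finitely generated subgroup $\overline{H}$), picks $h_n \in K_{n+1}\setminus K_n$, and for $g\in G\setminus\{1\}$ uses the ``characteristic function'' element $g^{(A)} = \prod_{a\in A} a^{-1}ga$ of the base. The key fact is $[g^{(A)},h]=1$ iff $h\in A$ when $A\leq H$ is a subgroup. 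Then the coefficient-free system $S' = \{[X,\varphi_n(Y_1,\ldots,Y_k)]\}$ (writing $h_n = \varphi_n(\bar h_1,\ldots,\bar h_k)$ in the generators of $\overline{H}$) is defeated by the witness $(g^{(K_N)},\bar h_1,\ldots,\bar h_k)$. Your sketch lacks this centraliser computation; without a concrete $X$-witness the argument does not close. Finally, the ``in particular'' claim for~\ref{i:pRF-wp} is not a standard preservation result: cyclic subgroup separability of $G\wr H$ (for $G$ abelian, $H$ cyclic subgroup separable) is precisely Theorem~\ref{t:css} of this paper and requires the substantial Proposition~\ref{p:AGcss}; only the conjugacy separability half is an external result (Remeslennikov).
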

An example of a group $G_2$ satisfying the assumptions of Proposition~\ref{p:RF=/>EN}\ref{i:pRF-wp} is $C_2 \wr (C_2 \wr \mathbb{Z})$. We prove part \ref{i:pRF-ut} of Proposition~\ref{p:RF=/>EN} in Section \ref{ss:RTFNnotEN} and part \ref{i:pRF-wp} in Section \ref{ss:CSnotEN}. For part \ref{i:pRF-wp}, we find necessary and sufficient conditions for a regular wreath product of two groups to be cyclic subgroup separable, as follows.

\begin{thm} \label{t:css}
Let $G$ and $H$ be groups. Then $G \wr H$ is cyclic subgroup separable if and only if both $G$ and $H$ are cyclic subgroup separable and either $G$ is abelian or $H$ is finite.
\end{thm}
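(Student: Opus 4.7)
The $(\Rightarrow)$ direction is immediate: cyclic subgroup separability implies residual finiteness, so Gruenberg's theorem recalled in the introduction forces $G$ abelian or $H$ finite; and $G, H$ embed into $G \wr H$, so both inherit cyclic subgroup separability. For $(\Leftarrow)$ when $H$ is finite, $G \wr H = G^H \rtimes H$ is a finite-index subgroup of a finite direct product of copies of $G$, and I would verify the standard facts that cyclic subgroup separability is preserved under finite direct products and under finite-index extensions.

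The main case is $G$ abelian with both $G, H$ cyclic subgroup separable. Let $w = (f_0, h_0)$ and $v = (f, h) \notin \langle w \rangle$. If $h \notin \langle h_0 \rangle$, composing $G \wr H \to H$ with a finite quotient of $H$ given by cyclic subgroup separability applied to $\langle h_0 \rangle$ separates. Otherwise $h = h_0^j$; after replacing $v$ by $v w^{-j}$ we may assume $v = (g, 1)$ with $g \in G^{(H)}$, and a short calculation shows $v \notin \langle w \rangle$ iff $g \notin \langle \sigma_n \rangle$, where $n$ is the order of $h_0$ (possibly infinite), $\sigma_k := \sum_{i=0}^{k-1} h_0^i \cdot f_0$, and $\sigma_\infty := 0$.

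To produce a finite quotient separating $(g, 1)$ from $\langle w \rangle$, I would use the $H$-equivariant summation map $\psi \colon G^{(H)} \to (G/N)^{H/L}$, $\psi(f')(qL) := \sum_{h'L = qL} f'(h') + N$ (a homomorphism since $G$ is abelian), which extends to $\Psi \colon G \wr H \to (G/N) \wr (H/L)$ onto a finite group; writing $d$ for the order of $h_0 L$ in $H/L$ and $\bar\sigma_d := \psi(\sigma_d)$, one checks $\bar\sigma_d$ is fixed by $h_0 L$ and that $\Psi(v) \in \Psi(\langle w \rangle)$ iff $\psi(g) \in \langle \bar\sigma_d \rangle$. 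The task reduces to choosing finite-index normal $L \trianglelefteq H$ and $N \trianglelefteq G$ so that $\psi(g) \notin \langle \bar\sigma_d \rangle$ in $(G/N)^{H/L}$.

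I would split on whether $g$ is $\langle h_0 \rangle$-invariant in $G^{(H)}$; this is automatic when $h_0$ has infinite order, since the $\langle h_0 \rangle$-orbits in $H$ are then infinite while $g$ has finite nonzero support. In the non-invariant case, pick $h^* \in H$ with $g(h_0 h^*) \neq g(h^*)$; using residual finiteness of $H$ to choose $L$ separating $\supp(g) \cup \{h^*, h_0 h^*\}$ into distinct cosets with $h_0 \notin L$ (and, when $n < \infty$, also $h_0^i \notin L$ for $0 < i < n$), and residual finiteness of $G$ to choose $N$ with $g(h^*) - g(h_0 h^*) \notin N$, one makes $\psi(g)$ non-$(h_0 L)$-invariant, so $\psi(g) \notin \langle \bar\sigma_d \rangle$. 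The main obstacle is the remaining sub-case --- $h_0$ of finite order $n$ with $g$ being $\langle h_0 \rangle$-invariant: here $g$ and $\sigma_n$ are constant with values $a_O, M_O \in G$ on each of the finitely many $\langle h_0 \rangle$-orbits $O$ meeting $\supp(g) \cup \supp(\sigma_n)$ indexed by a finite $I$, and $g \notin \langle \sigma_n \rangle$ translates to $(a_O)_O \notin \langle (M_O)_O \rangle$ in $G^I$. I would use a brief lemma that finite direct products of cyclic-subgroup-separable groups are cyclic-subgroup-separable to obtain a finite quotient of $G^I$ separating them, then via a standard diagonal argument realise this as $(G/N)^I$ for a single finite-index normal $N$; choosing $L$ by residual finiteness of $H$ to separate $\supp(g) \cup \supp(\sigma_n)$ into distinct cosets of $L$ and to make $h_0 L$ have order exactly $n$ in $H/L$, one verifies $\psi(g) \notin \langle \bar\sigma_n \rangle$, finishing the proof.
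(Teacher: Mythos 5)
Your proposal is correct and follows the same broad strategy as the paper's (constructing a separating finite quotient of the form $(G/N)\wr(H/L)$), but the case analysis and several key observations differ. The paper splits on whether $a=h_0$ has finite or infinite order in $H$; you instead first normalise $v$ to the form $(g,1)$, which cleanly recasts the problem as ``$g\notin\langle\sigma_n\rangle$'', and then split on whether $g$ is $\langle h_0\rangle$-invariant. Your observation in the non-invariant case --- that every element of $\langle\overline{\sigma}_d\rangle$ is $(h_0L)$-invariant, so it suffices to destroy invariance of $\psi(g)$ --- is slicker than the paper's treatment of the infinite-order case, which instead derives a contradiction by comparing support sizes in the quotient. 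Conversely, where you appeal to closure of cyclic subgroup separability under finite direct products and under finite-index extensions (the latter for the $H$-finite case, the former for the invariant sub-case), the paper proves self-contained auxiliary results: Lemma~\ref{l:Acss} in fact directly produces a single finite-index $B\lhd G$ with $\pi_B(h)\notin\langle\pi_B(f)\rangle$ in $\bigoplus_{\mathcal{X}}(G/B)$, which renders your ``diagonal'' step unnecessary, and Lemma~\ref{l:fincss} handles $H$ finite directly. Both facts you quote are true, but neither is entirely trivial, so a self-contained write-up should include proofs (the finite-product fact amounts essentially to Lemma~\ref{l:Acss} for finite $\mathcal{X}$). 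Two small slips, both harmless: the sentence ``$G\wr H=G^H\rtimes H$ is a finite-index subgroup of a finite direct product of copies of $G$'' should say that $G\wr H$ \emph{contains} the finite direct product $G^H$ as a finite-index normal subgroup; and with the paper's convention $f_a(x)=f(xa^{-1})$, $\langle h_0\rangle$-invariance of $g$ means $g(xh_0)=g(x)$ for all $x$, so the witness in the non-invariant case should satisfy $g(h^*h_0)\neq g(h^*)$ rather than $g(h_0h^*)\neq g(h^*)$, and correspondingly $L$ should separate $\supp(g)\cup\{h^*,h^*h_0\}$.
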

We prove Theorem~\ref{t:css} in Section \ref{s:cssep}.

In the other direction, we prove the following result.
\begin{thm} \label{t:EN=/>RF}
Let $G$ be a connected Zariski closed Lie subgroup of $GL_m(\mathbb{R})$, and let $p: H \to G$ be a covering map such that $H$ is a connected Lie group and $p$ is a group homomorphism. Then $H$ is strongly equationally Noetherian. In particular, for $n \geq 2$, the preimage of $Sp_{2n}(\mathbb{Z})$ under the universal covering map $\widetilde{Sp_{2n}(\mathbb{R})} \to Sp_{2n}(\mathbb{R})$ is a finitely presented equationally Noetherian group that is not residually finite.
\end{thm}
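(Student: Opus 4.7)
The plan is to split the proof into two parts: first show that $H$ is strongly equationally Noetherian, and then deduce the specific consequence for $\widetilde{Sp_{2n}(\mathbb{R})}$.

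First I would unpack the structure of the covering. Since $p$ is a covering map of connected Lie groups, the kernel $Z := \ker p$ is a discrete normal subgroup of $H$, hence central (any discrete normal subgroup of a connected topological group lies in the centre). Moreover $Z$ is isomorphic to a quotient of $\pi_1(G)$, and any connected Lie subgroup of $GL_m(\mathbb{R})$ deformation retracts onto its maximal compact subgroup (Iwasawa decomposition), so $\pi_1(G)$ is finitely generated abelian; hence so is $Z$. Strong equational Noetherianity of $G$ then follows by combining \cite[Theorem B1]{bmr} (which asserts that $GL_m(R)$ is strongly equationally Noetherian for any commutative Noetherian ring $R$, in particular $R = \mathbb{R}$) with the fact that strong equational Noetherianity passes to subgroups.

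The principal step is the lemma: if $1 \to Z \to H \to G \to 1$ is a central extension with $G$ strongly equationally Noetherian and $Z$ finitely generated abelian, then $H$ is strongly equationally Noetherian. Given a system $S \subseteq F_n * H$, fix a set-theoretic section $\sigma : G \to H$ with $\sigma(1_G) = 1_H$ and write each $h_i \in H$ uniquely as $h_i = z_i \sigma(g_i)$ with $g_i = p(h_i)$ and $z_i \in Z$. Centrality of $Z$ yields the identity
\[
s(h_1, \ldots, h_n) = \left(\prod_{i=1}^n z_i^{e_i(s)}\right) \cdot s(\sigma(g_1), \ldots, \sigma(g_n))
\]
for every $s \in F_n * H$, where $e_i(s) \in \mathbb{Z}$ is the exponent sum of $X_i$ in $s$. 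Consequently the equation $s(\mathbf{h}) = 1$ decomposes into a word equation $\bar s(\mathbf{g}) = 1$ in $G$ (with $\bar s$ the image of $s$ in $F_n * G$) together with an affine equation $\sum_i e_i(s) z_i = -\zeta_s(\mathbf{g})$ in $Z$, defined when the $G$-equation holds, where $\zeta_s(\mathbf{g}) \in Z$ encodes the $Z$-part of $s(\sigma(\mathbf{g}))$. Strong equational Noetherianity of $G$ reduces the family $\{\bar s : s \in S\}$ to a finite subsystem, and Noetherianity of $\mathbb{Z}^n$ picks out a finite $S_2 \subseteq S$ whose exponent-sum vectors generate the same $\mathbb{Z}$-submodule as all of $\{\mathbf{e}(s) : s \in S\}$; the residue of each $s \in S$ then reduces to the vanishing of $\tilde s(\sigma(\mathbf{g})) \in Z$, where $\tilde s := s \cdot \prod_{s' \in S_2} (s')^{-a_{s, s'}}$ has zero exponent-sum vector (so its evaluation depends only on $\mathbf{g}$).

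The main technical obstacle is reducing this $\mathbf{g}$-parametrised family of $Z$-valued conditions to a finite sub-family, since $Z$ is Noetherian only as an abstract abelian group whereas the conditions vary non-trivially with $\mathbf{g}$. I would attack this by re-feeding the $\tilde s$-conditions into the reduction machinery (the conditions $\tilde s(\sigma(\mathbf{g})) = 1$ are themselves $H$-word equations that factor through $\mathbf{g}$) and iterating the strong-EN-of-$G$/finite-generation-of-$Z$ dichotomy; combined Noetherianity of $G^n$ (via strong EN of $G$) and of $\mathbb{Z}^n$ forces termination after finitely many rounds. An alternative route is to push $\tilde s$ forward into the adjoint quotient $H/Z(H) \hookrightarrow GL(\mathfrak{h})$, which is linear and hence strongly equationally Noetherian by \cite[Theorem B1]{bmr}, reducing the residual problem to one over a linear overgroup.

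For the ``in particular'' part: $Sp_{2n}(\mathbb{R})$ is a connected Zariski closed subgroup of $GL_{2n}(\mathbb{R})$ which is homotopy equivalent to $U(n)$, so $\pi_1(Sp_{2n}(\mathbb{R})) \cong \mathbb{Z}$, and $\widetilde{Sp_{2n}(\mathbb{R})}$ is a connected Lie group to which the main theorem applies. The preimage $\widetilde\Gamma$ of $Sp_{2n}(\mathbb{Z})$ is equationally Noetherian as a subgroup and fits into a central extension $1 \to \mathbb{Z} \to \widetilde\Gamma \to Sp_{2n}(\mathbb{Z}) \to 1$; since $Sp_{2n}(\mathbb{Z})$ is finitely presented for $n \geq 2$ (e.g.\ by Borel--Serre) and $\mathbb{Z}$ is finitely generated, $\widetilde\Gamma$ is finitely presented. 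Finally, $\widetilde\Gamma$ is not residually finite by a theorem of Deligne, whose proof uses the congruence subgroup property of $Sp_{2n}(\mathbb{Z})$ for $n \geq 2$ to force the image of the central $\mathbb{Z} \leq \widetilde\Gamma$ in every finite quotient of $\widetilde\Gamma$ to have bounded order, so that the intersection of all finite-index subgroups of $\widetilde\Gamma$ contains a non-trivial subgroup of this central $\mathbb{Z}$.
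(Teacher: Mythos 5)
There is a genuine gap at the crux of the argument. You correctly identify that $Z = \ker p$ is discrete central, reduce to coefficient systems where each $\tilde s$ has exponent-sum zero in the $X_i$, and thereby arrive at the residual problem: show that infinitely many $Z$-valued constraints $\tilde s(\sigma(\mathbf g)) = 1$, parametrised continuously by $\mathbf g$ ranging over the solution set of a finite subsystem in $G^n$, reduce to finitely many. Both of the routes you propose for this step fail. The first (``re-feed and iterate the strong-EN-of-$G$/finite-generation-of-$Z$ dichotomy'') has no strictly decreasing complexity: the $\tilde s$-conditions are already exponent-sum-zero and already factor through $G^n$, so another pass through the same machinery produces the same kind of object, not a simpler one; no termination argument is supplied and none is evident. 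The second (``push $\tilde s$ forward to the adjoint quotient $H/Z(H)$'') discards precisely the data at issue: the condition $\tilde s(\sigma(\mathbf g)) = 1$ is a statement about an element of $Z \leq Z(H)$, which dies in $H/Z(H)$, so the pushed-forward condition is vacuous and detects nothing.

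The paper handles this step by a different — and essential — idea that is absent from your proposal: a topological/semialgebraic finiteness argument. After using that $G$ is linear over $\mathbb{R}$ (hence strongly equationally Noetherian) to replace $S$ by a finite $S_0 \subseteq S$, one observes that $V := V_G(S_0) \subseteq G^n \subseteq \mathbb{R}^{n(m^2+1)}$ is a real affine algebraic variety, hence has only \emph{finitely many} path components $V_1,\dots,V_r$ (a theorem of Whitney; the paper cites \cite[Theorem 4.1]{dekn}). Because each exponent-sum-zero equation $\tilde s$ defines a continuous map $\widehat s \colon V \to K = \ker p$ into a \emph{discrete} group, $\widehat s|_V$ is constant on each $V_i$. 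Thus for each ``bad'' component $V_i$ (one where some $\widehat s$ is nonzero) a single witness $s_i \in \widetilde S$ suffices, and $r < \infty$ gives the finite subsystem. This argument also shows there is no need for your intermediate claim that $Z$ is finitely generated abelian: discreteness alone is what drives the proof, and in fact the general statement of the theorem holds without any finite-generation hypothesis on $\ker p$. Your treatment of the ``in particular'' part (Deligne non-residual-finiteness, finite presentability via Borel--Serre/Raghunathan and the central $\mathbb{Z}$-extension) matches the paper and is correct.
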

We prove Theorem~\ref{t:EN=/>RF} in Section \ref{ss:ENnotRF}.

\subsection{Graphs of groups}

Finally, we devise a collection of sufficient conditions for the fundamental group $\pi_1(\mathcal{G})$ of a graph of groups $\mathcal{G}$ to be equationally Noetherian, and show that analogous conditions are sufficient for $\pi_1(\mathcal{G})$ to be residually finite. Briefly, a (finite) graph of groups $\mathcal{G}$ consists of a (finite) graph with vertex set $V(\mathcal{G})$ and edge set $E(\mathcal{G})$ (each edge viewed as a pair of directed edges), collections $\{ G_v \mid v \in V(\mathcal{G}) \}$ of vertex groups and $\{ G_e \mid e \in E(\mathcal{G}) \}$ of edge groups, and injective homomorphisms $\iota_e: G_e \to G_{i(e)}$ (where $i(e)$ is the initial vertex of $e \in E(\mathcal{G})$); see Definition \ref{d:gg} for a precise definition of a graph of groups $\mathcal{G}$, its fundamental group $\pi_1(\mathcal{G})$ and the corresponding Bass--Serre tree $\mathcal{T_G}$.

In the following Theorem, we say a subset $A \subseteq G$ of a group $G$ is \emph{quasi-algebraic} if for all $n \geq 1$ and all $S \subseteq F_n$, there exists a finite subset $S_0 \subseteq S$ such that $V_{G,A}(S_0) = V_{G,A}(S)$, where for any $T \subseteq F_n$ we define $V_{G,A}(T) = \{ (g_1,\ldots,g_n) \in G^n \mid t(g_1,\ldots,g_n) \in A \text{ for all } t \in T \}$. Note that a normal subgroup $N \unlhd G$ is quasi-algebraic if and only if $G/N$ is equationally Noetherian, and in particular $\{1\}$ is quasi-algebraic in $G$ if and only if $G$ is equationally Noetherian. Examples of quasi-algebraic subsets in a group $G$ include cosets of finite-index subgroups and, if $G$ is equationally Noetherian, solution sets of equations; we refer the reader to Section~\ref{ss:qalg} for a more detailed discussion. See also Section~\ref{ss:acyl} for the definition of an acylindrical action and a sufficient condition for the action of $\pi_1(\mathcal{G})$ on $\mathcal{T_G}$ to be acylindrical (Lemma~\ref{l:acyl}). 

\begin{thm} \label{t:comb}
Let $\mathcal{G}$ be a finite graph of groups. Suppose that
\begin{enumerate}[label={\textup{(\roman*)}}]
\item \label{i:tcomb-ext} for each $e \in E(\mathcal{G})$, the isomorphism $\phi_e = \iota_{\overline{e}} \circ \iota_e^{-1}: \iota_e(G_e) \to \iota_{\overline{e}}(G_e)$ extends to a homomorphism $\overline\phi_e: G_{i(e)} \to G_{i(\overline{e})}$; and
\item \label{i:tcomb-fin1v} for each $v \in V(\mathcal{G})$, there are only finitely many endomorphisms of the form $\overline\phi_{e_n} \circ \cdots \circ \overline\phi_{e_1}: G_v \to G_v$, where $e_1 \cdots e_n$ ranges over all closed paths in $\mathcal{G}$ starting and ending at $v$.
\end{enumerate}
Then the following hold.
\begin{enumerate}[label=\textup{(\alph*)}]
\item \label{i:tcomb-RF} The group $\pi_1(\mathcal{G})$ is residually finite if and only if $G_v$ is residually finite for all $v \in V(\mathcal{G})$ and $\iota_e(G_e)$ is separable in $G_{i(e)}$ for all $e \in E(\mathcal{G})$.
\item \label{i:tcomb-EN} Suppose that the action of $\pi_1(\mathcal{G})$ on $\mathcal{T_G}$ is acylindrical. Then the group $\pi_1(\mathcal{G})$ is equationally Noetherian if and only if $G_v$ is equationally Noetherian for all $v \in V(\mathcal{G})$ and $\iota_e(G_e)$ is quasi-algebraic in $G_{i(e)}$ for all $e \in E(\mathcal{G})$.
\end{enumerate}
\end{thm}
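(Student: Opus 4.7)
The plan is to prove the forward and backward directions of \ref{i:tcomb-RF} and \ref{i:tcomb-EN} separately, exploiting the parallel between residual finiteness and equational Noetherianity. For the forward direction of both parts, each vertex group $G_v$ embeds in $\pi_1(\mathcal{G})$ as the stabiliser of a lift of $v$ in the Bass--Serre tree, so RF (respectively EN) of $\pi_1(\mathcal{G})$ descends to $G_v$ at once. For the edge conditions, I would exploit hypotheses \ref{i:tcomb-ext} and \ref{i:tcomb-fin1v} to construct retraction-like homomorphisms: the composition $\overline\phi_{\overline e} \circ \overline\phi_e \colon G_{i(e)} \to G_{i(e)}$ restricts to the identity on $\iota_e(G_e)$, and since the semigroup of endomorphisms arising from closed paths at $i(e)$ is finite by \ref{i:tcomb-fin1v}, a suitable power $\psi$ of this composition is idempotent. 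The retraction $\psi \colon G_{i(e)} \to \mathrm{Im}(\psi)$ can then be spliced with the extensions $\overline\phi_e$ into a homomorphism from $\pi_1(\mathcal{G})$ onto a subgroup of $G_{i(e)}$ containing $\iota_e(G_e)$, from which the required separability or quasi-algebraicity of $\iota_e(G_e)$ in $G_{i(e)}$ pulls back from RF (resp.\ EN) of $\pi_1(\mathcal{G})$.

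For the backward direction of \ref{i:tcomb-RF}, given a nontrivial $g \in \pi_1(\mathcal{G})$ in Bass--Serre normal form along a reduced path, I would construct a finite quotient separating $g$ from $1$ syllable-by-syllable. Separability of each $\iota_e(G_e)$ in $G_{i(e)}$ produces finite-index normal subgroups $N_v \unlhd G_v$ which interact correctly with the edge inclusions for the syllables occurring in $g$; condition \ref{i:tcomb-fin1v} then allows me to refine these subgroups so they are preserved by every endomorphism $\overline\phi_{e_n} \circ \cdots \circ \overline\phi_{e_1}$, by intersecting over the finite semigroup orbit. The resulting compatible system of finite-index subgroups defines a morphism from $\mathcal{G}$ to a finite graph of finite groups, inducing a homomorphism from $\pi_1(\mathcal{G})$ onto the fundamental group of that finite graph of finite groups, which is virtually free and hence RF; composing with an RF quotient of the target then separates $g$ from $1$.

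For the backward direction of \ref{i:tcomb-EN}, I would use a Groves--Hull-style limit argument. A strictly descending chain of algebraic sets in $\pi_1(\mathcal{G})^n$ produces a sequence $(\bar g_k)$ of tuples distinguishing successive members of the chain, and the goal is to derive a contradiction. Acylindricity of the action on $\mathcal{T_G}$ controls the Bass--Serre normal forms of the entries of $\bar g_k$: after passing to a subsequence, all tuples share a common syllable pattern of bounded length. Each equation in the system then translates, uniformly along the subsequence, into conditions on individual syllables (living in fixed vertex groups) together with conditions on which products of syllables fall into fixed edge subgroups. Equational Noetherianity of the $G_v$ handles the former and quasi-algebraicity of the $\iota_e(G_e)$ in $G_{i(e)}$ handles the latter, together forcing stabilisation of the chain and producing the desired contradiction. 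The main obstacle I expect is this uniform translation step: converting the geometric boundedness afforded by acylindricity into an algebraic description preserved under taking subsequences and matching cleanly with the quasi-algebraic hypothesis on edge subgroups, while also keeping track of how hypothesis \ref{i:tcomb-fin1v} interacts with the combinatorics of normal forms in the limit.
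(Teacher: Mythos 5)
Your overall architecture is sound and matches the paper for the backward implications: for \ref{i:tcomb-RF} ($\Leftarrow$) you build a compatible system of finite-index normal subgroups $N_v \unlhd G_v$ closed under the finitely many path homomorphisms guaranteed by \ref{i:tcomb-fin1v}, pass to a quotient graph of groups with finite vertex groups, and use that its fundamental group is virtually free; for \ref{i:tcomb-EN} ($\Leftarrow$) you run a Groves--Hull-style limit argument, using acylindricity to control syllable lengths of the sequence of tuples, then push membership conditions down to the vertex groups (via equational Noetherianity) and to the edge subgroups (via quasi-algebraicity). Both of these are essentially what the paper does, and the ``uniform translation step'' you flag as the main obstacle is exactly the content of the paper's Proposition~5.12, where the free group from which the $\varphi_j$ are defined has to be enlarged to a still finitely generated free group whose free factors see all of the finitely many path homomorphisms (this is precisely where \ref{i:tcomb-fin1v} enters).

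However, your forward direction has a genuine gap. You propose to take a power $\psi$ of $\overline\phi_{\overline e} \circ \overline\phi_e$ that is idempotent, note that $\psi$ restricts to the identity on $\iota_e(G_e)$, and then ``splice'' these maps into a homomorphism $\pi_1(\mathcal{G}) \to \mathrm{Im}(\psi) \leq G_{i(e)}$ restricting to the identity on $\iota_e(G_e)$, so that separability (resp.\ quasi-algebraicity) of $\iota_e(G_e)$ would pull back. No such homomorphism exists in general, even under hypotheses \ref{i:tcomb-ext} and \ref{i:tcomb-fin1v}. Consider the HNN extension $K = \langle a, t \mid t^{-1}a^2t = a^{-2} \rangle$ over $G = G_v = \langle a \rangle \cong \mathbb{Z}$ with edge subgroup $\iota_e(G_e) = 2\mathbb{Z}$, where $\phi_e$ sends $a^2 \mapsto a^{-2}$. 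Here $\overline\phi_e = -\mathrm{id}$ extends $\phi_e$, so \ref{i:tcomb-ext} holds, and the semigroup of closed-path endomorphisms is $\{\pm\mathrm{id}\}$, so \ref{i:tcomb-fin1v} holds; moreover $\psi = \overline\phi_{\overline e}\circ\overline\phi_e = \mathrm{id}$. A homomorphism $\theta\colon K \to \mathbb{Z}$ restricting to the identity on $2\mathbb{Z}$ would have to satisfy $\theta(a^2) = 2$ and $\theta(t^{-1}a^2t) = \theta(a^{-2}) = -2$, but also $\theta(t^{-1}a^2t) = \theta(a^2) = 2$; contradiction. So the ``splicing'' step fails. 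The Bass--Serre relations force a coherence condition on the path homomorphisms landing in $G_{i(e)}$ which your construction does not, and cannot, supply. A further symptom is that your argument invokes \ref{i:tcomb-fin1v} in the forward direction, whereas the paper's forward implications do not use \ref{i:tcomb-fin1v} at all (nor acylindricity in part \ref{i:tcomb-EN}); the paper even remarks on this explicitly.

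The paper's forward argument (Lemma~5.18) is instead a local trick that avoids any global homomorphism: using only \ref{i:tcomb-ext} and the normal form theorem, one checks that for $h \in G_{i(e)}$ one has $h \in \iota_e(G_e)$ if and only if $e\,\overline\phi_e(h)^{-1}\,e^{-1}\,h = 1$ in $\pi_1(\mathcal{G})$. This converts membership in $\iota_e(G_e)$ into a single equation over $\pi_1(\mathcal{G})$. For residual finiteness, if $h \notin \iota_e(G_e)$ one separates the nontrivial element $e\overline\phi_e(h)^{-1}e^{-1}h$ in a finite quotient and deduces the separability of $\iota_e(G_e)$; for equational Noetherianity, one replaces a system $S \subseteq F_n$ testing membership in $\iota_e(G_e)$ by the system $\overline S = \{\, X_{2n+1}\, s(X_1,\ldots,X_n)^{-1}\, X_{2n+1}^{-1}\, s(X_{n+1},\ldots,X_{2n}) \mid s \in S \,\} \subseteq F_{2n+1}$ over $\pi_1(\mathcal{G})$ and applies Noetherianity there. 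You should replace your ``idempotent retraction'' argument with this local equation.
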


We prove Theorem~\ref{t:comb} in Section~\ref{s:comb}. In both parts \ref{i:tcomb-RF} and \ref{i:tcomb-EN}, the implication ($\Rightarrow$) actually works in a more general setting: our proof of this implication does not use the assumption \ref{i:tcomb-fin1v}, nor the assumption of acylindricity in part \ref{i:tcomb-EN}.


The main new contribution of Theorem~\ref{t:comb} is part \ref{i:tcomb-EN}. In contrast, even though part \ref{i:tcomb-RF} does not seem to be explicit in the literature, many special cases or related results have been shown: see, for instance, \cite{be,bt,hempel,arv}. We state and prove part \ref{i:tcomb-RF} here to complement part \ref{i:tcomb-EN} -- that is, to highlight the similarity between the classes of equationally Noetherian and residually finite groups.

We provide a couple of corollaries that can be easily deduced from Theorem~\ref{t:comb}.

\begin{cor} \label{c:graphs-same}
Let $\mathcal{G}$ be a finite graph of groups. Suppose that there exists a group $G$ and isomorphisms $\psi_v: G_v \to G$ for all $v \in V(\mathcal{G})$ such that $\psi_{i(e)} \circ \iota_e = \psi_{i(\overline{e})} \circ \iota_{\overline{e}}$ for all $e \in E(\mathcal{G})$. Then the statements \ref{i:tcomb-RF} and \ref{i:tcomb-EN} in Theorem~\ref{t:comb} hold.
\end{cor}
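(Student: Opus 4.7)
The plan is to reduce the corollary to Theorem~\ref{t:comb} by checking that its assumptions \ref{i:tcomb-ext} and \ref{i:tcomb-fin1v} are automatically satisfied under the hypothesis of the corollary. The key idea is that the identifications $\psi_v$ make every vertex group ``the same'' copy of $G$, with the two edge embeddings $\iota_e$ and $\iota_{\overline{e}}$ becoming literally the same subgroup of $G$. This suggests taking, for each edge $e$,
\[
\overline\phi_e := \psi_{i(\overline{e})}^{-1} \circ \psi_{i(e)} \colon G_{i(e)} \to G_{i(\overline{e})},
\]
which is an isomorphism of vertex groups. First I would check that this extends $\phi_e$: for any $x \in G_e$, the hypothesis $\psi_{i(e)} \circ \iota_e = \psi_{i(\overline{e})} \circ \iota_{\overline{e}}$ gives
\[
\overline\phi_e(\iota_e(x)) = \psi_{i(\overline{e})}^{-1}\bigl(\psi_{i(e)}(\iota_e(x))\bigr) = \psi_{i(\overline{e})}^{-1}\bigl(\psi_{i(\overline{e})}(\iota_{\overline{e}}(x))\bigr) = \iota_{\overline{e}}(x) = \phi_e(\iota_e(x)),
\]
so hypothesis \ref{i:tcomb-ext} holds.

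The central computation is hypothesis \ref{i:tcomb-fin1v}. Fix a closed path $e_1 \cdots e_n$ at a vertex $v$, so $i(e_1) = v = i(\overline{e_n})$ and $i(\overline{e_k}) = i(e_{k+1})$ for $1 \leq k < n$. Then
\[
\overline\phi_{e_n} \circ \cdots \circ \overline\phi_{e_1} = \bigl(\psi_{i(\overline{e_n})}^{-1} \circ \psi_{i(e_n)}\bigr) \circ \cdots \circ \bigl(\psi_{i(\overline{e_1})}^{-1} \circ \psi_{i(e_1)}\bigr).
\]
The matching of endpoints cancels every interior pair $\psi_{i(\overline{e_k})} \circ \psi_{i(\overline{e_k})}^{-1}$, and closedness of the path cancels the outer factors $\psi_v \circ \psi_v^{-1}$; the composition telescopes to $\mathrm{id}_{G_v}$. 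Thus there is exactly one endomorphism of the form required in \ref{i:tcomb-fin1v}, namely the identity, which is far stronger than finiteness.

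Having verified both \ref{i:tcomb-ext} and \ref{i:tcomb-fin1v}, Theorem~\ref{t:comb} immediately yields both biconditionals \ref{i:tcomb-RF} and \ref{i:tcomb-EN} of the corollary (the latter of course inheriting the acylindricity hypothesis built into Theorem~\ref{t:comb}\ref{i:tcomb-EN} itself; the corollary does not claim that acylindricity is automatic). The whole argument is essentially bookkeeping, so there is no serious obstacle; the only point requiring care is the orientation convention for $\overline\phi_e$, which must be chosen so that it extends $\phi_e$ in the correct direction, fixing the ordering of the $\psi_v$'s used throughout the telescoping.
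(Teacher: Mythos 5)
Your proof is correct and is the natural (and evidently intended) argument: define $\overline\phi_e := \psi_{i(\overline{e})}^{-1} \circ \psi_{i(e)}$, check it extends $\phi_e$ via the compatibility condition $\psi_{i(e)} \circ \iota_e = \psi_{i(\overline{e})} \circ \iota_{\overline{e}}$, and observe that the composition along any path telescopes to $\psi_{i(\overline{e_n})}^{-1} \circ \psi_{i(e_1)}$, hence to $\mathrm{id}_{G_v}$ on a closed path, so hypotheses~\ref{i:tcomb-ext} and~\ref{i:tcomb-fin1v} of Theorem~\ref{t:comb} hold trivially. The paper states this corollary without a separate proof, treating it as an immediate application of Theorem~\ref{t:comb}; your argument supplies exactly the verification that is being left implicit.
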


Informally, Corollary~\ref{c:graphs-same} gives necessary and sufficient conditions for $\pi_1(\mathcal{G})$ to be residually finite and (when the action $\pi_1(\mathcal{G}) \curvearrowright \mathcal{T_G}$ is acylindrical) equationally Noetherian, in the case when all the vertex groups $G_v$ are `the same' and all the edge maps $\iota_e$ are `inclusions'.

\begin{cor} \label{c:graphs-HNN}
Let $G$ be a group, $\varphi: G \to G$ an automorphism of finite order, and $H \leq G$. Consider the HNN-extension $K = \langle G,t \mid h^t = \varphi(h) \text{ for all } h \in H \rangle$. 
\begin{enumerate}[label=\textup{(\alph*)}]
\item \label{i:combHNN-RF} $K$ is residually finite if and only if $G$ is residually finite and $H$ is separable in $G$.
\item \label{i:combHNN-EN} Suppose that the action of $K$ on the corresponding Bass--Serre tree is acylindrical. Then $K$ is equationally Noetherian if and only if $G$ is equationally Noetherian and $H$ is quasi-algebraic in $G$.
\end{enumerate}
\end{cor}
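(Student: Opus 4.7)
The plan is to realise $K$ as the fundamental group of a graph of groups with one vertex and one loop edge, and then deduce both parts directly from Theorem~\ref{t:comb}. Specifically, I take $\mathcal{G}$ to have a single vertex $v$ carrying vertex group $G_v = G$, and a single loop edge pair $\{e, \overline{e}\}$ with edge group $G_e = H$, where $\iota_e: H \hookrightarrow G$ is the inclusion and $\iota_{\overline{e}} = \varphi|_H: H \to G$. The standard computation of fundamental groups of graphs of groups identifies $\pi_1(\mathcal{G}, v)$ with the HNN-extension $K$, and the associated Bass--Serre tree is precisely the one referred to in the hypothesis of part~\ref{i:combHNN-EN}.

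I would then verify the hypotheses \ref{i:tcomb-ext} and \ref{i:tcomb-fin1v} of Theorem~\ref{t:comb}. For \ref{i:tcomb-ext}, the isomorphism $\phi_e = \iota_{\overline{e}} \circ \iota_e^{-1}: H \to \varphi(H)$ is just the restriction of $\varphi$, which extends to the automorphism $\overline\phi_e = \varphi$ of $G$; similarly $\overline\phi_{\overline{e}}$ can be taken to be $\varphi^{-1}$. For \ref{i:tcomb-fin1v}, any closed path at $v$ is a finite word in $e$ and $\overline{e}$, so the corresponding composition $\overline\phi_{e_n} \circ \cdots \circ \overline\phi_{e_1}$ is equal to some integer power $\varphi^k$. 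Since $\varphi$ has finite order, only finitely many such powers arise, so \ref{i:tcomb-fin1v} holds.

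Applying Theorem~\ref{t:comb} now yields that $K$ is residually finite (respectively, equationally Noetherian, under the acylindricity assumption) if and only if $G$ is residually finite (respectively, equationally Noetherian) and both $\iota_e(H) = H$ and $\iota_{\overline{e}}(H) = \varphi(H)$ are separable (respectively, quasi-algebraic) in $G$. To match the statement of Corollary~\ref{c:graphs-HNN}, it remains to observe that separability and quasi-algebraicity of $\varphi(H)$ in $G$ are equivalent to the corresponding properties of $H$. For separability this is clear, as $\varphi$ permutes the finite-index normal subgroups of $G$. For quasi-algebraicity it follows from the identity $s(\varphi(g_1),\ldots,\varphi(g_n)) = \varphi(s(g_1,\ldots,g_n))$, valid for every word $s \in F_n$ on free generators, which shows that $\varphi^{\times n}$ restricts to a bijection $V_{G,H}(S) \to V_{G,\varphi(H)}(S)$ for every $S \subseteq F_n$, so a finite subsystem witnessing quasi-algebraicity for $H$ works equally well for $\varphi(H)$. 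I do not foresee a substantive obstacle; the only slightly delicate point is this last verification that quasi-algebraicity is invariant under automorphisms of the ambient group.
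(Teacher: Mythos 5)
Your proof is correct and is exactly the deduction the paper leaves implicit (the paper merely states that this corollary ``can be easily deduced from Theorem~\ref{t:comb}''). The one-vertex one-loop graph of groups with $\iota_e$ the inclusion and $\iota_{\overline{e}} = \varphi|_H$ is the right model for the HNN-extension, the verification of hypotheses \ref{i:tcomb-ext} and \ref{i:tcomb-fin1v} via $\overline\phi_e = \varphi$, $\overline\phi_{\overline{e}} = \varphi^{-1}$ and the finite order of $\varphi$ is correct, and the closing observation that both $H$ and $\varphi(H)$ must be handled (with separability and quasi-algebraicity being invariant under the automorphism $\varphi$) is the one non-trivial point one needs, which you address cleanly; note that the quasi-algebraicity claim is also an immediate instance of part~\ref{i:qa-hom} of the Proposition in Section~\ref{ss:qalg} applied to $\varphi^{-1}$.
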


\begin{rmk}
Corollary~\ref{c:graphs-HNN}\ref{i:combHNN-RF} may be compared with \cite[Theorem~A]{logan}, which proves the same statement without the assumption that $\varphi$ has finite order but with an additional assumption that $G$ is finitely generated. Both of these assumptions cannot be dropped simultaneously: indeed, if $G$ is a non-abelian finite group, then $\bigoplus_{j \in \mathbb{Z}} G$ is residually finite, but the wreath product $K = G \wr \mathbb{Z} \cong \left( \bigoplus_{j \in \mathbb{Z}} G \right) \rtimes \mathbb{Z}$ is not \cite[Theorem~3.2]{gruenberg}.
\end{rmk}

\begin{ack}
The author would like to thank Armando Martino and Ashot Minasyan for valuable discussions, as well as the referees for their insightful comments.
\end{ack}

\section{Equationally Noetherian families} \label{s:enfam}

In this section, we prove Lemma~\ref{l:RFEN} and parts \ref{i:cENfam-free}, \ref{i:cENfam-nilp} and \ref{i:cENfam-lin} of Corollary~\ref{c:ENfam}. We first give (fairly straightforward) proofs of Lemma~\ref{l:RFEN} and Corollary \ref{c:ENfam}\ref{i:cENfam-free}.

\begin{proof}[Proof of Lemma~\ref{l:RFEN}]
Let $S \subseteq F_n$ be a system of equations. As $\mathcal{F}$ is equationally Noetherian, there exists a finite subset $S_0 \subseteq S$ such that $V_F(S) = V_F(S_0)$ for all $F \in \mathcal{F}$. We claim that we also have $V_G(S) = V_G(S_0)$ for all $G \in \mathcal{G}$.

Thus, let $G \in \mathcal{G}$. It is clear that $V_G(S) \subseteq V_G(S_0)$. Conversely, let $\mathbf{g} = (g_1,\ldots,g_n) \in G^n$ be such that $\mathbf{g} \notin V_G(S)$. Then $s(g_1,\ldots,g_n) \neq 1$ for some $s \in S$. As $G$ is residually $\mathcal{F}$, it follows that there exist $F \in \mathcal{F}$ and a homomorphism $\pi: G \to F$ such that
\[
s(\pi(g_1),\ldots,\pi(g_n)) = \pi(s(g_1,\ldots,g_n)) \neq 1.
\]
In particular, $(\pi(g_1),\ldots,\pi(g_n)) \notin V_F(S) = V_F(S_0)$, and so there exists $s_0 \in S_0$ such that
\[
\pi(s_0(g_1,\ldots,g_n)) = s_0(\pi(g_1),\ldots,\pi(g_n)) \neq 1.
\]
Therefore, $s_0(g_1,\ldots,g_n) \neq 1$ and so $\mathbf{g} \notin V_G(S_0)$. This shows that $V_G(S) \supseteq V_G(S_0)$ and so $V_G(S) = V_G(S_0)$, as claimed.
\end{proof}

\begin{proof}[Proof of Corollary~\ref{c:ENfam}\ref{i:cENfam-free}]
By \cite[Lemma 3.11]{gh}, a family $\mathcal{G}$ of groups is equationally Noetherian if and only if the family of finitely generated subgroups of groups in $\mathcal{G}$ is equationally Noetherian. Thus, as subgroups of free groups are free, it is enough to show that the family $\mathcal{G}$ of \emph{finitely generated} free groups are equationally Noetherian. But $\mathcal{G}$ is precisely the family of finitely generated subgroups of $F_2$, and so it is enough to show that the family $\{F_2\}$ is equationally Noetherian, that is, the group $F_2$ is equationally Noetherian. But this follows, for instance, since $F_2$ is linear over $\mathbb{Z}$.
\end{proof}

The proofs of parts \ref{i:cENfam-nilp} and \ref{i:cENfam-lin} of Corollary~\ref{c:ENfam} are somewhat more involved. For part \ref{i:cENfam-nilp}, we use the existence of free nilpotent groups, as well as the fact that finitely generated nilpotent groups are \emph{Noetherian}: that is, all subgroups of a finitely generated nilpotent group are finitely generated.

Recall that a group $G$ is said to be \emph{$r$-step nilpotent} (for $r \geq 1$) if the $(r+1)$-fold simple commutator $[x_0,\ldots,x_r]$, defined inductively by $[x_0] = x_0$ and $[x_0,\ldots,x_i] = [[x_0,\ldots,x_{i-1}],x_i]$, is equal to the identity for all $x_0,\ldots,x_r \in G$. In this paper we adopt the convention that $[x,y] = x^{-1}y^{-1}xy$ for $x,y \in G$.

\begin{proof}[Proof of Corollary~\ref{c:ENfam}\ref{i:cENfam-nilp}]
Fix an integer $r \geq 1$, and let $\mathcal{G}$ be the family of all $r$-step nilpotent groups. Let $n \geq 1$, and let $S \subseteq F_n$ be a system of coefficient-free equations. As $F_n$ is countable, we may write $S = \{ s_1,s_2,\ldots \}$ for some $s_i \in F_n$, and define $S_i = \{ s_1,\ldots,s_i \}$ for any $i \geq 0$, so that $S = \bigcup_{i=0}^\infty S_i$.

Suppose for contradiction that for each $i \geq 0$, there exists $G_i \in \mathcal{G}$ such that $V_{G_i}(S_i) \neq V_{G_i}(S)$, and so there exists $\mathbf{g}^{(i)} = (g_1^{(i)},\ldots,g_n^{(i)}) \in G_i^n$ such that $s_j(g_1^{(i)},\ldots,g_n^{(i)}) = 1$ for $1 \leq j \leq i$ but not for all $j \geq 1$. By choosing the $\mathbf{g}^{(i)}$ one by one for $i = 1,2,\ldots$, and reordering the $\{ s_{i+1},s_{i+2},\ldots \}$ after each such choice if necessary, we may assume that $s_{i+1}(g_1^{(i)},\ldots,g_n^{(i)}) \neq 1$ for each $i \geq 0$. For each $i \geq 0$, let $\varphi_i: F_n \to G_i$ be the group homomorphism sending $X_j$ to $g^{(i)}_j$, where $\{ X_1,\ldots,X_n \}$ is a fixed free basis for $F_n$.

As $G_i$ is $r$-step nilpotent, it follows that any $(r+1)$-fold simple commutator maps to the identity under the map $\varphi_i$, and so $\ker(\varphi_i)$ contains $\gamma_{r+1}F_n$, the $(r+1)$-st term of the lower central series of $F_n$. Thus $\ker(\varphi_i) = p^{-1}(p(\ker\varphi_i))$, where $p: F_n \to F_n / \gamma_{r+1}F_n$ is the quotient map. By construction, we also know that $S_i \subseteq \ker(\varphi_i)$ but $s_{i+1} \notin \ker(\varphi_i)$ for each $i$, and so $p(S_i) \subseteq p(\ker(\varphi_i))$ but $p(s_{i+1}) \notin p(\ker(\varphi_i))$.

Thus, if $N_i \unlhd F_n/\gamma_{r+1}F_n$ is the normal closure of $p(S_i)$ in $F_n/\gamma_{r+1}F_n$, then $N_i \subseteq p(\ker(\varphi_i))$ but $N_{i+1} \nsubseteq p(\ker(\varphi_i))$ for each $i$, and so we have a strictly ascending chain $N_0 \lneq N_1 \lneq N_2 \lneq \cdots$ of subgroups of $F_n/\gamma_{r+1}F_n$. But as $F_n/\gamma_{r+1}F_n$ is $r$-step nilpotent and finitely generated, it is Noetherian (meaning that all its subgroups are finitely generated) -- this follows from the observation that $\gamma_iF / \gamma_{i+1}F$ is a finitely generated abelian group, generated by the images of $i$-fold simple commutators of the generators of $F$, for any finitely generated group $F$ and any $i \geq 1$. Thus no such strictly ascending chain of subgroups $N_0 \lneq N_1 \lneq N_2 \lneq \cdots$ can exist, and so for some $i$ we must have $V_G(S_i) = V_G(S)$ for all $G \in \mathcal{G}$. Hence $\mathcal{G}$ is an equationally Noetherian family, as required.
\end{proof}

The proof of Corollary~\ref{c:ENfam}\ref{i:cENfam-lin} given below relies on the Hilbert's basis theorem.

\begin{proof}[Proof of Corollary~\ref{c:ENfam}\ref{i:cENfam-lin}]
Fix integers $r,n \geq 1$, and consider the ring of integral polynomials in $r^2n$ commuting variables, $\mathcal{R} = \mathbb{Z}[X_{k,\ell}^{(m)} \mid 1 \leq k \leq r, 1 \leq \ell \leq r, 1 \leq m \leq n]$. Then it is easy to see -- by induction on the word length, say -- that for any $s \in F_n$ there exist polynomials $s_{i,j} \in \mathcal{R}$, $i,j \in \{1,\ldots,r\}$, such that
\[
s(A^{(1)},\ldots,A^{(n)}) = \begin{pmatrix} s_{1,1}(a^{(m)}_{k,\ell}) & \cdots & s_{1,r}(a^{(m)}_{k,\ell}) \\ \vdots & \ddots & \vdots \\ s_{r,1}(a^{(m)}_{k,\ell}) & \cdots & s_{r,r}(a^{(m)}_{k,\ell}) \end{pmatrix}
\]
for any matrices $A^{(m)} = (a^{(m)}_{k,\ell})_{k,\ell=1}^r \in GL_r(R)$, $1 \leq m \leq n$, and any commutative unital ring $R$. For $1 \leq i,j \leq r$, let $\widehat{s}_{i,j} = s_{i,j} - \delta_{i,j} \in \mathcal{R}$, where $\delta_{i,i} = 1 \in \mathbb{Z}$ and $\delta_{i,j} = 0$ for $i \neq j$. It then follows that for any $s \in F_n$, any commutative unital ring $R$ and any $A^{(m)} = (a^{(m)}_{k,\ell})_{k,\ell=1}^r \in GL_r(R)$, $1 \leq m \leq n$, we have $s(A^{(1)},\ldots,A^{(n)}) = 1$ in $GL_r(R)$ if and only if $\widehat{s}_{i,j} \in \ker(\Phi^{(R)}_{A^{(1)},\ldots,A^{(n)}})$ for $1 \leq i,j \leq r$, where $\Phi^{(R)}_{A^{(1)},\ldots,A^{(n)}}: \mathcal{R} \to R$ is a ring homomorphism sending $X^{(m)}_{k,\ell}$ to $a^{(m)}_{k,\ell}$ for each $k,\ell,m$.

Now let $\mathcal{G}$ be the family of all subgroups of $GL_r(R)$ for commutative unital rings $R$. Let $S = \{ s_1,s_2,\ldots \} \subseteq F_n$, and let $S_i = \{ s_1,\ldots,s_i \}$ for $i \geq 0$. Suppose for contradiction that for each $i \geq 0$ there exists $G_i \in \mathcal{G}$ (and so $G_i \leq GL_r(R_i)$) such that $V_{G_i}(S_i) \neq V_{G_i}(S)$. Then, in the same way as in the proof of Corollary~\ref{c:ENfam}\ref{i:cENfam-nilp} above, we may assume that there exists a commutative unital ring $R_i$ and a tuple $(A^{(i,1)},\ldots,A^{(i,n)}) \in (GL_r(R_i))^n$ for each $i \geq 0$ such that $s_j(A^{(i,1)},\ldots,A^{(i,n)}) = 1$ for $1 \leq j \leq i$ but $s_{i+1}(A^{(i,1)},\ldots,A^{(i,n)}) \neq 1$.

Now this implies, by the above, that for each $i \geq 0$ we have $\widehat{(s_j)}_{i',j'} \in \ker(\Phi^{(R_i)}_{A^{(i,1)},\ldots,A^{(i,n)}})$ for $1 \leq j \leq i$ and $1 \leq i',j' \leq r$, but $\widehat{(s_{i+1})}_{i',j'} \notin \ker(\Phi^{(R_i)}_{A^{(i,1)},\ldots,A^{(i,n)}})$ for some $i',j' \in \{1,\ldots,r \}$. Thus, if $I_i$ is the ideal of $\mathcal{R}$ generated by $\widehat{(s_j)}_{i',j'}$ for $1 \leq j \leq i$ and $1 \leq i',j' \leq r$, then for each $i \geq 0$ the ideal $I_{i+1}$ strictly contains $I_i$. This gives rise to a strictly ascending chain of ideals $I_0 \subsetneq I_1 \subsetneq I_2 \subsetneq \cdots$ of $\mathcal{R}$. But as $\mathbb{Z}$ is Noetherian and $\mathcal{R}$ is a ring of polynomials in commuting variables over $\mathbb{Z}$, this contradicts the Hilbert's basis theorem. Thus, for some $i$, we must have $V_G(S_i) = V_G(S)$ for all $G \in \mathcal{G}$, and so $\mathcal{G}$ is an equationally Noetherian family, as required.
\end{proof}

\begin{rmk}
The same proof, just by using the ring of polynomials over $R$ instead of over $\mathbb{Z}$, shows that if $R$ is a \emph{Noetherian} commutative unital ring, then $GL_r(R)$ is \emph{strongly} equationally Noetherian, recovering \cite[Theorem B1]{bmr}. However, as shown in the next result, this need not hold if $R$ is not assumed to be Noetherian.
\end{rmk}

\begin{lem} \label{l:nnsen}
There exists a $2$-step nilpotent group $G$ that is linear (over a commutative unital ring) but not strongly equationally Noetherian.
\end{lem}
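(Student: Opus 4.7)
The plan is to exploit the failure of Noetherianity directly: the remark preceding the lemma shows that the proof of Corollary~\ref{c:ENfam}\ref{i:cENfam-lin} breaks down when ascending chains of ideals in a polynomial ring over $R$ need not stabilise, and the goal is to turn this into an actual counterexample inside $G = UT_3(R)$ for a suitably chosen non-Noetherian commutative unital ring $R$. The strategy is to find such an $R$ that admits a strictly descending chain of ideals of the form $\bigcap_{i=1}^n \mathrm{Ann}_R(r_i)$, and then to encode this chain as a strictly descending chain of solution sets of coefficient-ful equations over $G$ in one variable $X$.

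A concrete choice is $R = \prod_{i=1}^\infty \mathbb{F}_p$, together with the idempotents $r_i \in R$ that are $1$ in the $i$-th coordinate and $0$ elsewhere; then $\bigcap_{i=1}^n \mathrm{Ann}_R(r_i)$ consists precisely of those tuples $(a_j)_j \in R$ with $a_1 = \cdots = a_n = 0$, and this family of ideals is visibly strictly descending in $n$. I would take $G = UT_3(R)$, the group of $3 \times 3$ upper unitriangular matrices over $R$. This group is manifestly linear over a commutative unital ring, and since its commutator subgroup coincides with its centre (both being the subgroup of matrices that differ from the identity only at position $(1,3)$), the group $G$ is $2$-step nilpotent.

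To produce the required infinite system of equations with no equivalent finite subsystem, I would take $g_i \in G$ to be the matrix differing from the identity only by $r_i$ at position $(2,3)$, and consider $S = \{s_i : i \geq 1\} \subseteq F_1 * G$ with $s_i = [X, g_i]$. A direct matrix computation (using the paper's convention $[x,y] = x^{-1}y^{-1}xy$) shows that for $X \in G$ whose $(1,2)$-entry is $a \in R$, the commutator $[X, g_i]$ differs from the identity only by $a r_i$ at position $(1,3)$, so $s_i(X) = 1$ in $G$ precisely when $a r_i = 0$. Hence $V_G(\{s_1,\ldots,s_n\})$ is the set of matrices in $G$ whose $(1,2)$-entry lies in $\bigcap_{i=1}^n \mathrm{Ann}_R(r_i)$, and the strict descent of these ideals immediately gives $V_G(\{s_1,\ldots,s_n\}) \supsetneq V_G(\{s_1,\ldots,s_{n+1}\})$ for every $n$, proving that $G$ is not strongly equationally Noetherian. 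There is no substantial obstacle in this argument: the only non-formal step is the commutator identity above, which pins the $(1,2)$-entry of any solution to the prescribed annihilator ideal and thereby transfers the non-Noetherian behaviour of $R$ into the equational theory of $G$.
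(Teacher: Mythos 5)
Your construction is correct and proves the lemma; the commutator computation is right (for $X$ with $(1,2)$-entry $a$ and $g_i$ with $(2,3)$-entry $r_i$, the standard Heisenberg identity gives $[X,g_i]=1$ iff $ar_i=0$), and since every finite $S_0 \subseteq S$ is contained in some initial segment $\{s_1,\ldots,s_n\}$ while the matrix with $(1,2)$-entry $r_{n+1}$ lies in $V_G(\{s_1,\ldots,s_n\}) \setminus V_G(S)$, no finite subsystem is equivalent to $S$.

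Your proof follows the same overall scheme as the paper's -- take $G = UT_3(R)$ over a non-Noetherian commutative unital ring $R$, and exhibit an infinite system of one-variable commutator equations $[X, g_i]$ with coefficients $g_i \in G$ whose solution sets form a strictly descending chain -- but the specific ring is different and noticeably cleaner. The paper takes $R = \mathbb{Z}[X_0,X_1,\ldots]/(X_0-1,\ X_{j+1}^2 - X_j)$ and needs a somewhat delicate argument (a homomorphism $\Psi_j \colon R \to \mathbb{C}$ sending $X_\ell$ to a root of unity) to verify the relevant non-vanishing $b_k \neq X_j b_k$. Your choice $R = \prod_{i=1}^\infty \mathbb{F}_p$ with the coordinate idempotents $r_i$ makes the key non-vanishing ($r_{n+1} r_{n+1} = r_{n+1} \neq 0$ while $r_{n+1} r_i = 0$ for $i \leq n$) completely transparent, and the solution sets are directly identified with intersections of annihilator ideals $\bigcap_{i \le n}\mathrm{Ann}_R(r_i)$. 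This is a simpler and more self-contained realisation of the same idea, and it also makes the mechanism -- that failure of (strong) equational Noetherianity for $UT_3(R)$ is driven exactly by a non-stabilising chain of ideals in $R$ -- explicit, as you note in your preamble.
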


\begin{proof}
Let $R = \mathbb{Z}[X_0,X_1,\ldots]/I$, where $I \lhd \mathbb{Z}[X_0,X_1,\ldots]$ is the ideal generated by $\{X_0-1\} \cup \{X_{j+1}^2-X_j \mid j \geq 0\}$. 
Let $G = UT_3(R) \leq GL_3(R)$ be the group of upper unitriangular $3 \times 3$ matrices with coefficients in $R$; it is clear that $G$ is linear and $2$-step nilpotent.

Now for each $j \geq 0$, let
\[
A_j = \begin{pmatrix} 1&1&0\\&1&X_j\\&&1 \end{pmatrix} \in G \qquad \text{and} \qquad B_j = \begin{pmatrix} 1&b_j&0\\&1&b_j\\&&1 \end{pmatrix} \in G,
\]
where $b_j = \sum_{\ell=0}^{2^j-1} X_j^\ell \in R$. It is easy to check that $[A_j,B_k] = 1$ if and only if $b_k = X_jb_k$ in $R$. Now since we have $X_j^{2^j} = 1$ in $R$ for each $j \geq 0$, it follows that we have $b_k = X_jb_k$ if $j \leq k$. On the other hand, if $j > k$ then we have $\Psi_j(b_k) = 2^k \neq -2^k = \Psi_j(X_jb_k)$, where $\Psi_j: R \to \mathbb{C}$ is a ring homomorphism defined by $\Psi_j(X_\ell) = \exp(2^{j-\ell}\pi i)$; therefore, $b_k \neq X_j b_k$. Thus $[A_j,B_k] = 1$ if and only if $j \leq k$.

Now for each $j \geq 0$, let $s_j = [A_j,Y] \in G * F_1(Y)$ and $S_j = \{ s_0,\ldots,s_j \} \subset G * F_1(Y)$, and let $S = \{s_0,s_1,\ldots\} = \bigcup_{j=0}^\infty S_j$. Then, as shown above, we have $s_j(B_k) = 1$ in $G$ if and only if $j \leq k$, and so $B_j \in V_G(S_j) \setminus V_G(S)$ for each $j$. Thus $V_G(S') \neq V_G(S)$ for any finite subset $S' \subset S$, and so $G$ is not strongly equationally Noetherian, as required.
\end{proof}

\section{Abelian-by-polycyclic groups} \label{s:abpolyc}

In this section we prove Theorem~\ref{t:abpolyc}. We first reduce to systems of `positive' or `exponent sum zero' equations, as follows. Given an equation $s \in F_n(X_1,\ldots,X_n) * G$ for some group $G$ and some $n \geq 1$, we say that:
\begin{enumerate}[label=(\roman*)]
\item $s$ is \emph{positive} if it is contained in the submonoid of $F_n * G$ generated (as a monoid) by $\{ X_1,\ldots,X_n \} \cup G$;
\item $s$ has \emph{exponent sum zero} if $\Psi(s) = 0$, where $\Psi: F_n * G \to F_n \to F_n/[F_n,F_n] \cong \mathbb{Z}^n$ is the canonical quotient map.
\end{enumerate}
In our proof of Theorem~\ref{t:abpolyc}, we use the equivalence of \ref{i:l+} and \ref{i:l+0-en} in the following result. We record the equivalence of \ref{i:l0} and \ref{i:l+0-en} for future reference, as we will use it in Section \ref{ss:ENnotRF}.

\begin{lem} \label{l:+0}
For any group $G$, the following are equivalent:
\begin{enumerate}[label={\textup{(\roman*)}}]
\item \label{i:l+} For any system $S \subseteq F_n * G$ of positive equations (respectively positive coefficient-free equations), there exists a finite subsystem $S_0 \subseteq S$ such that $V_G(S_0) = V_G(S)$.
\item \label{i:l0} For any system $S \subseteq F_n * G$ of equations of exponent sum zero (respectively coefficient-free equations of exponent sum zero), there exists a finite subsystem $S_0 \subseteq S$ such that $V_G(S_0) = V_G(S)$.
\item \label{i:l+0-en} $G$ is strongly equationally Noetherian (respectively equationally Noetherian).
\end{enumerate}
\end{lem}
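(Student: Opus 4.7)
The implications (iii) $\Rightarrow$ (i) and (iii) $\Rightarrow$ (ii) are immediate: positive equations and equations of exponent sum zero are special cases of general equations, so (iii) supplies the required finite equivalent subsystem. The substantive content is therefore (i) $\Rightarrow$ (iii) and (ii) $\Rightarrow$ (iii), and both go through the same strategy of rewriting a general system as an equivalent system of the special form, at the cost of introducing new variables or auxiliary equations.

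For (i) $\Rightarrow$ (iii), the plan is to \emph{double the variables}. Let $Y_1, \ldots, Y_n$ be new free letters and consider the surjection $\pi: F_{2n} * G \to F_n * G$ sending $X_i \mapsto X_i$, $Y_i \mapsto X_i^{-1}$, and fixing $G$ pointwise. Each $s \in F_n * G$ admits a \emph{positive} lift $s^+ \in F_{2n} * G$ (a word in the monoid generated by the $X_i$, $Y_i$ and $G$) with $\pi(s^+) = s$: write $s$ as an alternating product of elements of $F_n$ and $G$, and replace each $X_i^{-1}$ by $Y_i$. Given $S \subseteq F_n * G$, I would apply (i) to the positive system $S^+ \cup R \subseteq F_{2n} * G$, where $S^+ = \{s^+ : s \in S\}$ and $R = \{X_iY_i : 1 \leq i \leq n\}$ is a finite positive set whose purpose is to force $Y_i = X_i^{-1}$. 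This extracts a finite subsystem $S_0^+ \cup R$, equivalent over $G$, with $S_0 \subseteq S$ finite. The verification $V_G(S_0) = V_G(S)$ reduces to the observation that any $(g_1, \ldots, g_n) \in V_G(S_0)$ extends to $(g_1, \ldots, g_n, g_1^{-1}, \ldots, g_n^{-1}) \in V_G(S_0^+ \cup R) = V_G(S^+ \cup R)$, so $s(g) = s^+(g, g^{-1}) = 1$ for every $s \in S$. The coefficient-free case is identical, with $s^+ \in F_{2n}$.

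For (ii) $\Rightarrow$ (iii), the plan is to peel off a finite ``linear part'' and reduce what remains to exponent sum zero. Given $S \subseteq F_n * G$, the subgroup $\langle \Psi(S) \rangle \leq \mathbb{Z}^n$ is finitely generated by Noetherianity of $\mathbb{Z}^n$, and since any generating set of a finitely generated abelian group contains a finite generating subset, I can choose $s_1, \ldots, s_k \in S$ with $\Psi(s_1), \ldots, \Psi(s_k)$ generating $\langle \Psi(S) \rangle$. For each $s \in S$, writing $\Psi(s) = \sum_{i=1}^k m_i(s)\, \Psi(s_i)$ lets me define
\[
s' = s \cdot s_k^{-m_k(s)} \cdots s_1^{-m_1(s)}, \qquad \text{so that } \Psi(s') = 0.
\]
Applying (ii) to $T = \{s' : s \in S\}$ yields a finite $T_0 \subseteq T$ with $V_G(T_0) = V_G(T)$, and I would take $S_0 \subseteq S$ to be $\{s_1, \ldots, s_k\}$ together with one preimage in $S$ for each element of $T_0$. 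For $g = (g_1, \ldots, g_n) \in V_G(S_0)$, the conditions $s_i(g) = 1$ collapse $s'(g)$ to $s(g)$ for every $s \in S$, so $g \in V_G(T_0) = V_G(T)$ forces $s(g) = 1$ throughout $S$.

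The main subtlety will arise in (ii) $\Rightarrow$ (iii): the representatives $s_i$ must be chosen \emph{from $S$}, not merely from $F_n * G$, so that the resulting finite subsystem is an honest subset of $S$. This is handled by the fact that a generating set of a finitely generated abelian group always contains a finite generating subset. The doubling argument for (i) $\Rightarrow$ (iii), by contrast, is essentially formal once the relations $X_i Y_i$ have been adjoined.
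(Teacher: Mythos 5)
Your proof is correct and follows essentially the same route as the paper: double the variables and lift to positive words for (i)$\Rightarrow$(iii), and peel off a finite subset of $S$ whose $\Psi$-images generate $\langle\Psi(S)\rangle$, then correct the remaining equations to exponent sum zero, for (ii)$\Rightarrow$(iii). The one small difference is that you adjoin the auxiliary equations $R=\{X_iY_i\}$, which is harmless but unnecessary -- the paper dispenses with $R$ by simply restricting attention to tuples of the form $(g_1,\ldots,g_n,g_1^{-1},\ldots,g_n^{-1})$, which also sidesteps the minor bookkeeping needed to ensure the finite subsystem of $S^+\cup R$ has the form $S_0^+\cup R$.
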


\begin{proof}
The implications \ref{i:l+0-en}~$\Rightarrow$~\ref{i:l+} and \ref{i:l+0-en}~$\Rightarrow$~\ref{i:l0} are trivial. We will show the implications \ref{i:l+}~$\Rightarrow$~\ref{i:l+0-en} and \ref{i:l0}~$\Rightarrow$~\ref{i:l+0-en} for systems of equations that need not be coefficient-free; the proof for coefficient-free equations is almost identical.

For \ref{i:l+}~$\Rightarrow$~\ref{i:l+0-en}, let $n \geq 1$, let $S \subseteq F_n(X_1,\ldots,X_n) * G$ be a system of equations, and suppose that \ref{i:l+} holds. Let $\Phi: F_n(X_1,\ldots,X_n) * G \to F_{2n}(X_1,\ldots,X_{2n}) * G$ be an injective monoid homomorphism defined by $\Phi(X_i) = X_i$, $\Phi(X_i^{-1}) = X_{n+i}$ for $1 \leq i \leq n$, and $\Phi(g) = g$ for any $g \in G$. Note that $\Phi$ sends any equation to a positive equation, and that
\begin{equation} \label{e:l+}
V_G(T) = \{ (g_1,\ldots,g_n) \in G^n \mid (g_1,\ldots,g_n,g_1^{-1},\ldots,g_n^{-1}) \in V_G(\Phi(T)) \}
\end{equation}
for any $T \subseteq F_n * G$. But as $\Phi(S)$ is a system of positive equations, it follows from \ref{i:l+} that $V_G(\Phi(S)) = V_G(\Phi(S_0))$ for a finite subset $S_0 \subseteq S$, and so $V_G(S_0) = V_G(S)$ by \eqref{e:l+}. Thus $G$ is strongly equationally Noetherian, as required.

For \ref{i:l0}~$\Rightarrow$~\ref{i:l+0-en}, let $n \geq 1$, let $S \subseteq F_n * G$ be a system of equations, and suppose that \ref{i:l0} holds. Let $\Psi: F_n * G \to \mathbb{Z}^n$ be the quotient map as above. Then the subgroup $\langle \Psi(S) \rangle \leq \mathbb{Z}^n$ is finitely generated, and so there exists a finite subset $S_0 = \{ s_1,\ldots,s_m \} \subseteq S$ such that $\langle \Psi(S_0) \rangle = \langle \Psi(S) \rangle$. We may then define a system of equations $S' = \{ s_1^{\alpha_1(s)} \cdots s_m^{\alpha_m(s)} s \mid s \in S \setminus S_0 \}$, where the $\alpha_i(s) \in \mathbb{Z}$ are chosen in such a way that $\Psi(S') = \{0\}$ -- that is, $S'$ is a system of exponent sum zero equations. It follows from \ref{i:l0} that $V_G(S_1') = V_G(S')$ for a finite subset $S_1' \subseteq S'$, and we may define $S_1 \subseteq S \setminus S_0$ to be a finite subset such that $S_1' = \{ s_1^{\alpha_1(s)} \cdots s_m^{\alpha_m(s)} s \mid s \in S_1 \}$. We then have
\[
V_G(S) = V_G(S_0) \cap V_G(S') = V_G(S_0) \cap V_G(S_1') = V_G(S_0) \cap V_G(S_1) = V_G(S_0 \cup S_1),
\]
and so $G$ is strongly equationally Noetherian, as required.
\end{proof}



\begin{proof}[Proof of Theorem~\ref{t:abpolyc}]
Let $G$ be an abelian-by-polycyclic group: namely, there exists a normal abelian subgroup $A \unlhd G$ such that $P = G/A$ is polycyclic. Then $G$ is a subgroup of the \emph{unrestricted} wreath product $A^P \rtimes P$ (see \cite[Theorem 22.21]{hneumann}), which is a split extension of an abelian group $A^P$ by a polycyclic group $P$. As subgroups of equationally Noetherian groups are equationally Noetherian, we may therefore assume, without loss of generality, that $G$ is a \emph{split} extension of $A$ by $P$, i.e.\ $G = A \rtimes P$. Note that $P$ acts on $A$ on the right by conjugation, making $A$ into a right $\mathbb{Z}P$-module.

Let $s \in F_n(X_1,\ldots,X_n)$ be a positive coefficient-free equation. Then $s = Y_1 \cdots Y_k$ for some $Y_i \in \{ X_1,\ldots,X_n \}$. We can write $s(a_1p_1,\ldots,a_np_n) = \hat{a}_1\hat{p}_1 \cdots \hat{a}_k\hat{p}_k$, where $a_j,\hat{a}_i \in A$, $p_j,\hat{p}_i \in P$, and where $\hat{a}_i\hat{p}_i = a_jp_j$ if $Y_i = X_j$. We may rewrite this as
\[
s(a_1p_1,\ldots,a_np_n) = s(p_1,\ldots,p_n) \hat{a}_1^{\hat{p}_1 \cdots \hat{p}_k} \hat{a}_2^{\hat{p}_2 \cdots \hat{p}_k} \cdots \hat{a}_k^{\hat{p}_k}.
\]
Writing $A$ additively as a $\mathbb{Z}P$-module, we may deduce that
\begin{equation} \label{e:solnAbyP}
s(a_1p_1,\ldots,a_np_n) = 1 \qquad \Leftrightarrow \qquad s(p_1,\ldots,p_n) = 1 \text{ and } \sum_{j=1}^n a_j \cdot s_j(p_1,\ldots,p_n) = 0,
\end{equation}
where
\[
s_j(p_1,\ldots,p_n) = \sum_{\substack{1 \leq i \leq k \\ Y_i = X_j}} p_i \cdots p_k \in \mathbb{Z}P.
\]
Note that we have $s_j(p_1,\ldots,p_n) = \Psi_{p_1,\ldots,p_n}(\bar{s}_j)$, where
\[
\bar{s}_j = \sum_{\substack{1 \leq i \leq k \\ Y_i = X_j}} Y_i \cdots Y_k \in \mathbb{Z}F_n,
\]
and $\Psi_{p_1,\ldots,p_n}: \mathbb{Z}F_n \to \mathbb{Z}P$ is a ring homomorphism induced by the group homomorphism $\psi_{p_1,\ldots,p_n}: F_n \to P$ sending $X_j$ to $p_j$.

Now as $P$ is polycyclic and $F_n$ is finitely generated, it is known (see \cite[Lemma 3.1]{gw1} and \cite[Example 2.2]{gw2}) that there exists a polycyclic group $Q$ and a surjective group homomorphism $\varepsilon: F_n \to Q$ such that any group homomorphism $\psi: F_n \to P$ factors through $\varepsilon$; let $E: \mathbb{Z}F_n \to \mathbb{Z}Q$ be the ring homomorphism induced by $\varepsilon$. In particular, for each $(p_1,\ldots,p_n) \in P^n$ there is a group homomorphism $\gamma_{p_1,\ldots,p_n}: Q \to P$ such that $\psi_{p_1,\ldots,p_n} = \gamma_{p_1,\ldots,p_n} \circ \varepsilon$, inducing a ring homomorphism $\Gamma_{p_1,\ldots,p_n}: \mathbb{Z}Q \to \mathbb{Z}P$ such that $\Psi_{p_1,\ldots,p_n} = \Gamma_{p_1,\ldots,p_n} \circ E$. Thus we can rewrite \eqref{e:solnAbyP} as
\begin{equation} \label{e:solnAbyP2}
s(a_1p_1,\ldots,a_np_n) = 1 \qquad \Leftrightarrow \qquad s(p_1,\ldots,p_n) = 1 \text{ and } \sum_{j=1}^n a_j \cdot \Gamma_{p_1,\ldots,p_n}(\hat{s}_j) = 0,
\end{equation}
where $\hat{s}_j = E(\bar{f}_j) \in \mathbb{Z}Q$. Moreover, as $Q$ is polycyclic, it is known that the ring $\mathbb{Z}Q$ is Noetherian, i.e.\ any submodule of a finitely generated right $\mathbb{Z}Q$-module is finitely generated \cite[Theorem 1]{hallpolycyc}.

Now let $S \subseteq F_n$ be a system of positive coefficient-free equations: it is enough to consider such a system by Lemma~\ref{l:+0}. As $P$ is polycyclic, and therefore linear, it follows that $P$ is strongly equationally Noetherian. In particular, $V_P(S) = V_P(S_1)$ for some finite subset $S_1 \subseteq S$. Moreover, as the ring $\mathbb{Z}Q$ is Noetherian, it follows that $M(S) = M(S_2)$ for some finite subset $S_2 \subseteq S$, where $M(T)$ is the submodule of the right $\mathbb{Z}Q$-module $(\mathbb{Z}Q)^n$ generated by $\{ (\hat{t}_1,\ldots,\hat{t}_n) \mid t \in T \}$ (for any $T \subseteq F_n$). It follows that we also have $M_{p_1,\ldots,p_n}(S) = M_{p_1,\ldots,p_n}(S_2)$ for any $(p_1,\ldots,p_n) \in P^n$, where $M_{p_1,\ldots,p_n}(T) = \{ (\Gamma_{p_1,\ldots,p_n}(g_1),\ldots,\Gamma_{p_1,\ldots,p_n}(g_n)) \mid (g_1,\ldots,g_n) \in M(T) \}$ for any $T \subseteq F_n$. Therefore, it follows from \eqref{e:solnAbyP2} that, for any $a_j \in A$ and $p_j \in P$,
\begin{align*}
(a_1p_1,\ldots,a_np_n) \in V_G(S) \qquad &\Leftrightarrow \qquad (p_1,\ldots,p_n) \in V_P(S) \\ &\qquad\quad \text{and } \sum_{j=1}^n a_j \cdot h_j = 0 \text{ for all } (h_1,\ldots,h_n) \in M_{p_1,\ldots,p_n}(S) \\ &\Leftrightarrow \qquad (p_1,\ldots,p_n) \in V_P(S_1) \\ &\qquad\quad \text{and } \sum_{j=1}^n a_j \cdot h_j = 0 \text{ for all } (h_1,\ldots,h_n) \in M_{p_1,\ldots,p_n}(S_2).
\end{align*}
In particular, $V_G(S) = V_G(S_1 \cup S_2)$. Thus, by Lemma~\ref{l:+0}, $G$ is equationally Noetherian, as required.
\end{proof}

\section{\texorpdfstring{$\mathcal{G}$}{G}-limit groups} \label{s:limit}

Recall that an \emph{ultrafilter} (on $\mathbb{N}$) is a finitely additive probability measure $\omega: 2^{\mathbb{N}} \to \{0,1\}$. An ultrafilter $\omega$ is said to be \emph{non-principal} if $\omega(F) = 0$ for every finite subset $F \subset \mathbb{N}$; non-principal ultrafilters exist subject to the Axiom of Choice. Throughout this section, we fix a non-principal ultrafilter $\omega$. Given a sequence $(P_j)_{j \in \mathbb{N}}$ of statements, we say \emph{$P_j$ holds $\omega$-almost surely} if $\omega(\{ j \in \mathbb{N} \mid P_j \text{ holds} \}) = 1$.

Let $\mathcal{G}$ be a family of groups. Given a group $G$, we write $\Hom(G,\mathcal{G})$ for the set of homomorphisms $\varphi: G \to H$ with $H \in \mathcal{G}$. The \emph{$\omega$-kernel} of a sequence of homomorphisms $(\varphi_j)_{j \in \mathbb{N}}$ from $\Hom(G,\mathcal{G})$ is defined as
\[
\ker^\omega(\varphi_j) = \{ g \in G \mid \varphi_j(g) = 1 \text{ $\omega$-almost surely} \}.
\]
It is easy to see that $\ker^\omega(\varphi_j)$ is a normal subgroup of $G$.

\begin{defn}
Let $\mathcal{G}$ be a family of groups. A \emph{$\mathcal{G}$-limit group} is a group of the form $G/\ker^\omega(\varphi_j)$ for a finitely generated group $G$ and a sequence $(\varphi_j)_{j \in \mathbb{N}}$ in $\Hom(G,\mathcal{G})$.
\end{defn}

We start by proving (a strengthening of) the first part of Proposition~\ref{p:limit}.

\begin{lem} \label{l:limit-EN}
Let $\mathcal{G}$ be an equationally Noetherian family of groups, and let $\mathcal{L}$ be a family of $\mathcal{G}$-limit groups. Then $\mathcal{L}$ is equationally Noetherian.
\end{lem}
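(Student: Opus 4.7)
The plan is to show that the same finite subsystem supplied by the hypothesis on $\mathcal{G}$ works uniformly for every $\mathcal{G}$-limit group in $\mathcal{L}$. Given $n \geq 1$ and $S \subseteq F_n$, I would first invoke the equational Noetherianity of $\mathcal{G}$ to choose a finite subset $S_0 \subseteq S$ satisfying $V_H(S_0) = V_H(S)$ for every $H \in \mathcal{G}$. The claim is that this same $S_0$ witnesses $V_L(S_0) = V_L(S)$ for every $L \in \mathcal{L}$. The inclusion $V_L(S) \subseteq V_L(S_0)$ is immediate, so the work is in the reverse inclusion.

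To that end, I would fix $L = G/\ker^\omega(\varphi_j) \in \mathcal{L}$, with $\varphi_j : G \to H_j$ and $H_j \in \mathcal{G}$, and consider an arbitrary tuple $(\bar{g}_1, \ldots, \bar{g}_n) \in V_L(S_0)$ with some lift $(g_1, \ldots, g_n) \in G^n$. Unwinding definitions, the assumption $(\bar{g}_1, \ldots, \bar{g}_n) \in V_L(S_0)$ translates into: for each $s_0 \in S_0$, the set
\[
A_{s_0} = \{ j \in \mathbb{N} \mid \varphi_j(s_0(g_1, \ldots, g_n)) = 1 \}
\]
has $\omega$-measure $1$. Here finiteness of $S_0$ is crucial: since $\omega$ is finitely additive, the intersection $A = \bigcap_{s_0 \in S_0} A_{s_0}$ still has $\omega$-measure $1$, so on a full-measure set of indices every $s_0 \in S_0$ is killed simultaneously by $\varphi_j$.

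For each $j \in A$, the tuple $(\varphi_j(g_1), \ldots, \varphi_j(g_n))$ lies in $V_{H_j}(S_0)$, which by choice of $S_0$ coincides with $V_{H_j}(S)$; hence every $s \in S$ is killed at such $j$ too. Consequently, for each individual $s \in S$, the set $\{ j \mid \varphi_j(s(g_1, \ldots, g_n)) = 1 \}$ contains $A$ and therefore has $\omega$-measure $1$, i.e.\ $s(g_1, \ldots, g_n) \in \ker^\omega(\varphi_j)$. This yields $s(\bar{g}_1, \ldots, \bar{g}_n) = 1$ in $L$ for every $s \in S$, as required.

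There is no genuine obstacle here beyond bookkeeping; the single essential point is that finiteness of $S_0$ lets one pass from \emph{each $s_0 \in S_0$ vanishes $\omega$-almost surely} to \emph{all of $S_0$ vanishes $\omega$-almost surely simultaneously}. That finiteness is exactly what equational Noetherianity of $\mathcal{G}$ provides, and it is what enables the finite subsystem $S_0$ to transport its defining property from $\mathcal{G}$ across the ultralimit construction to $\mathcal{L}$.
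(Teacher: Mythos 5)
Your proof is correct and follows essentially the same route as the paper: fix the finite subsystem $S_0$ via equational Noetherianity of $\mathcal{G}$, use finite additivity of $\omega$ to pass from ``each $s_0 \in S_0$ vanishes $\omega$-almost surely'' to ``all of $S_0$ vanishes simultaneously $\omega$-almost surely,'' and then apply $V_{H_j}(S_0) = V_{H_j}(S)$ on that full-measure set of indices. The paper phrases this as a chain of set inclusions between the various $V_L(\cdot)$ and $V_{H_j}(\cdot)$, whereas you argue element-by-element, but the argument is the same.
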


\begin{proof}
Let $S \subseteq F_n$ be a system of equations. Since $\mathcal{G}$ is equationally Noetherian, there exists a finite subset $S_0 \subseteq S$ such that $V_G(S_0) = V_G(S)$ for all $G \in \mathcal{G}$. We claim that $V_L(S_0) = V_L(S)$ for all $L \in \mathcal{L}$.

Thus, let $L \in \mathcal{L}$, so that $L = G / \ker^\omega(\varphi_j)$ for a group $G$ and homomorphisms $\varphi_j: G \to G_j$ (where $G_j \in \mathcal{G}$) for all $j \in \mathbb{N}$. We write $\varphi_\infty: G \to L$ for the quotient map, and $\varphi_\infty^{(n)}$ for the induced quotient map from $G^n$ to $L^n$. Since $\varphi(s(g_1,\ldots,g_n)) = s(\varphi(g_1),\ldots,\varphi(g_n))$ for every $s \in S$, $\varphi \in \Hom(G,\mathcal{G} \cup \{L\})$ and $g_1,\ldots,g_n \in G$, we have
\begin{align*}
V_L(S) &= \{ (h_1,\ldots,h_n) \in L^n \mid s(h_1,\ldots,h_n) = 1 \text{ for all } s \in S \} \\
&= \varphi_\infty^{(n)} \left( \{ (g_1,\ldots,g_n) \in G^n \mid s(g_1,\ldots,g_n) \in \ker(\varphi_\infty) \text{ for all } s \in S \} \right) \\
&= \varphi_\infty^{(n)} \left( \{ (g_1,\ldots,g_n) \in G^n \mid \text{for all } s \in S, \varphi_j(s(g_1,\ldots,g_n)) = 1 \text{ $\omega$-almost surely} \} \right) \\
&\supseteq \varphi_\infty^{(n)} \left( \{ (g_1,\ldots,g_n) \in G^n \mid \text{$\omega$-almost surely}, \varphi_j(s(g_1,\ldots,g_n)) = 1  \text{ for all } s \in S \} \right) \\
&= \varphi_\infty^{(n)} \left( \{ (g_1,\ldots,g_n) \in G^n \mid \text{$\omega$-almost surely}, (\varphi_j(g_1),\ldots,\varphi_j(g_n)) \in V_{G_j}(S) \} \right).
\end{align*}
Moreover, since $\omega$ is finitely additive, the inclusion ($\supseteq$) becomes an equality when $S$ is finite. As $V_{G_j}(S) = V_{G_j}(S_0)$ for all $j \in \mathbb{N}$, we thus have
\begin{align*}
V_L(S) &\supseteq \varphi_\infty^{(n)} \left( \{ (g_1,\ldots,g_n) \in G^n \mid \text{$\omega$-almost surely}, (\varphi_j(g_1),\ldots,\varphi_j(g_n)) \in V_{G_j}(S) \} \right) \\
&= \varphi_\infty^{(n)} \left( \{ (g_1,\ldots,g_n) \in G^n \mid \text{$\omega$-almost surely}, (\varphi_j(g_1),\ldots,\varphi_j(g_n)) \in V_{G_j}(S_0) \} \right) \\
&= V_L(S_0) \supseteq V_L(S)
\end{align*}
and so $V_L(S) = V_L(S_0)$, as claimed.
\end{proof}

\begin{rmk}
The proof of Lemma~\ref{l:limit-EN} does not use the fact that the group $G$ is finitely generated, and so we have actually proved something stronger. For instance, the same proof shows that if $\mathcal{G}$ is an equationally Noetherian family, then the family of all (not necessarily finitely generated) direct limits $\varinjlim G_j$ (with $G_j \in \mathcal{G}$) is equationally Noetherian. In general, a direct limit of finite groups that belong to an equationally Noetherian family need not be residually finite, as the (divisible) quasicyclic group $\mathbb{Z}\left[\frac{1}{2}\right] \big/ \mathbb{Z} \cong \varinjlim C_{2^j}$ shows; the family $\{ C_{2^j} \mid j \in \mathbb{N} \}$ is equationally Noetherian by Corollary~\ref{c:ENfam}\ref{i:cENfam-nilp}.
\end{rmk}

\begin{lem} \label{l:limit-RF}
Let $\mathcal{G}$ be an equationally Noetherian family of finite groups, and let $L$ be a $\mathcal{G}$-limit group. Then $L$ is residually finite.
\end{lem}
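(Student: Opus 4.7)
The plan is to use equational Noetherianity of $\mathcal{G}$ to turn the (a priori infinite) set of defining relations of $L$ into a finite set, so that $\omega$-almost every $\varphi_j$ actually factors through $L$, giving a homomorphism to the finite group $G_j$.

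More concretely, write $L = G/K$ with $K = \ker^\omega(\varphi_j)$, fix a free group $F_n$ on $\{X_1,\ldots,X_n\}$ and a surjection $\pi: F_n \twoheadrightarrow G$ (using the finite generation of $G$), and let $N = \pi^{-1}(K) \unlhd F_n$, so that $L \cong F_n/N$. Since $\mathcal{G}$ is an equationally Noetherian family, pick a finite subset $N_0 \subseteq N$ with $V_H(N_0) = V_H(N)$ for every $H \in \mathcal{G}$. For each single $w \in N_0$ the element $\pi(w)$ lies in $K$, so $\varphi_j(\pi(w)) = 1$ holds $\omega$-almost surely; since $N_0$ is finite and $\omega$ is finitely additive, these conditions hold simultaneously on an $\omega$-large set, i.e.\ the tuple $\mathbf{g}^{(j)} := (\varphi_j(\pi(X_1)),\ldots,\varphi_j(\pi(X_n)))$ satisfies $\mathbf{g}^{(j)} \in V_{G_j}(N_0) = V_{G_j}(N)$ for $\omega$-almost every $j$. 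For such $j$, the assignment $X_i \mapsto \varphi_j(\pi(X_i))$ kills every element of $N$, and therefore descends to a well-defined homomorphism $\psi_j: L \to G_j$.

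To conclude residual finiteness, take any non-trivial $\bar{g} \in L$, lift it to some $g \in G \setminus K$, and observe that by definition of $\ker^\omega$ the set $\{j : \varphi_j(g) \neq 1\}$ has $\omega$-measure $1$. Intersecting this set with the $\omega$-large set on which $\psi_j$ is defined (which is non-empty, in fact $\omega$-large), we obtain some index $j$ for which $\psi_j: L \to G_j$ is a homomorphism to a finite group with $\psi_j(\bar{g}) = \varphi_j(g) \neq 1$. This shows $L$ is residually finite. The only real subtlety in the argument is Step~2, i.e.\ the passage from ``$\mathbf{g}^{(j)}$ satisfies finitely many relations of $L$'' to ``$\mathbf{g}^{(j)}$ satisfies \emph{all} relations of $L$'', which is exactly where equational Noetherianity of $\mathcal{G}$ is used in a non-trivial way; this is essentially the same mechanism as in Lemma~\ref{l:RFEN} and Lemma~\ref{l:limit-EN}, applied to the ``relation set'' $N$ rather than to a general system $S$.
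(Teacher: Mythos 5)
Your proof is correct and follows essentially the same route as the paper's: the crux in both cases is that, $\omega$-almost surely, $\varphi_j$ factors through the quotient map $G\to L$, after which residual finiteness follows by picking a single index $j$ where this happens and $\varphi_j(g)\neq 1$. The only difference is that where you deduce the factorisation by pulling $\ker^\omega(\varphi_j)$ back to a relation set $N\subseteq F_n$ and invoking the equationally Noetherian family property directly, the paper cites \cite[Theorem 3.6]{gh} for precisely this conclusion ($\ker(\varphi_\infty)\subseteq\ker(\varphi_j)$ $\omega$-almost surely); your argument is in effect a self-contained proof of the relevant part of that theorem.
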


\begin{proof}
We have $L = G / \ker^\omega(\varphi_j)$ for a finitely generated group $G$ and homomorphisms $\varphi_j: G \to G_j$ (where $G_j \in \mathcal{G}$) for all $j \in \mathbb{N}$. Let $\varphi_\infty: G \to L$ be the quotient map.

Let $h \in L$ be a non-trivial element, and let $g \in \varphi_\infty^{-1}(h)$. Thus $\varphi_j(g) \neq 1$ $\omega$-almost surely. Moreover, as $\mathcal{G}$ is equationally Noetherian and $G$ is finitely generated, it follows from \cite[Theorem 3.6]{gh} that $\ker(\varphi_\infty) \subseteq \ker(\varphi_j)$ $\omega$-almost surely. Thus there exists $j \in \mathbb{N}$ such that we have both $\varphi_j(g) \neq 1$ and $\ker(\varphi_\infty) \subseteq \ker(\varphi_j)$. The latter implies that $\varphi_j = \eta \circ \varphi_\infty$ for a homomorphism $\eta: L \to G_j$; moreover, we have $\eta(h) = \eta(\varphi_\infty(g)) = \varphi_j(g) \neq 1$. As $G_j$ is finite, this shows that $L$ is residually finite, as required.
\end{proof}

\begin{proof}[Proof of Proposition~\ref{p:limit}]
This follows immediately from Lemmas \ref{l:limit-EN} and \ref{l:limit-RF}.
\end{proof}

\section{Cyclic subgroup separability of wreath products} \label{s:cssep}

In this section we prove Theorem~\ref{t:css}.

Throughout the section, we employ the following notation. Given groups $G$ and $H$, we write elements of the (regular, restricted) wreath product $G \wr H$ as $(f,a)$, where $a \in H$ and $f: H \to G$ is a function such that $f(x) = 1$ for all but finitely many $x \in H$. The \emph{support} of such a function $f: H \to G$ is the finite subset $\supp(f) = \{ x \in H \mid f(x) \neq 1 \}$ of $H$. Given $f: H \to G$ and $a \in H$, we also write $f_a: H \to G$ for the function defined by $f_a(x) = f(xa^{-1})$. With this notation, the group operations in $G \wr H$ become $(f,a)(g,b) = (f_bg,ab)$ and $(f,a)^{-1} = (f_{a^{-1}}^{-1},a^{-1})$.

\begin{lem} \label{l:Acss}
Let $G$ be a group that is cyclic subgroup separable, and let $\mathcal{X}$ be a set. If $f,h \in \bigoplus_{x \in \mathcal{X}} G$ are elements such that $h \notin \langle f \rangle$, then there exists a finite-index normal subgroup $B \lhd G$ such that $\pi_B(h) \notin \langle \pi_B(f) \rangle$, where $\pi_B: \bigoplus_{x \in \mathcal{X}} G \to \bigoplus_{x \in \mathcal{X}} (G/B)$ is the canonical quotient map.
\end{lem}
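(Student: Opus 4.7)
The plan is to reduce to the case when $\mathcal{X}$ is finite and then split according to whether $h$ fails to lie in $\langle f\rangle$ in a single coordinate or only through the joint behaviour of several coordinates.

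First I would replace $\mathcal{X}$ by $\supp(f) \cup \supp(h)$, which is finite and loses no generality (any $B$ that works on this subset works on $\mathcal{X}$, since both $\pi_B(h)$ and the powers $\pi_B(f)^k$ are supported in it). Write $\mathcal{X} = \{x_1,\ldots,x_m\}$, $f_i = f(x_i)$ and $h_i = h(x_i)$. The easy case is that $h_i \notin \langle f_i\rangle$ in $G$ for some single coordinate $i$: cyclic subgroup separability of $\langle f_i\rangle \leq G$ directly yields a finite-index normal $B \lhd G$ with $\pi_B(h_i) \notin \langle \pi_B(f_i)\rangle$, and then the $i$-th coordinate alone witnesses $\pi_B(h) \notin \langle \pi_B(f)\rangle$.

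In the main case, $h_i = f_i^{k_i}$ for every $i$ for some $k_i \in \mathbb{Z}$. Letting $n_i = \mathrm{ord}(f_i) \in \mathbb{Z}_{\geq 1} \cup \{\infty\}$, the hypothesis $h \notin \langle f\rangle$ amounts to the simultaneous congruence system $\{k \equiv k_i \pmod{n_i}\}$ having no integer solution; by the Chinese remainder theorem for finitely many (possibly infinite) moduli, this forces the existence of a pair $i \neq j$, which I may take to be $i = 1, j = 2$, with $d := \gcd(n_1,n_2) \nmid \delta := k_1 - k_2$. I would pick a prime $\ell$ with $v_\ell(d) > v_\ell(\delta)$ (such an $\ell$ exists because $d \nmid \delta$) and set $w := v_\ell(\delta) + 1$, so that $w \leq v_\ell(n_i)$ for $i = 1, 2$.

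The key step is to apply cyclic subgroup separability to each cyclic subgroup $\langle f_i^{\ell^w}\rangle \leq G$ to separate off the element $f_i^{\ell^{w-1}}$, which lies outside $\langle f_i^{\ell^w}\rangle$ precisely because $v_\ell(n_i) \geq w$ (a short order-theoretic check in the cyclic group $\langle f_i\rangle$). This produces finite-index normal $B_i \lhd G$ with $\pi_{B_i}(f_i^{\ell^{w-1}}) \notin \langle \pi_{B_i}(f_i)^{\ell^w}\rangle$ for $i = 1, 2$, and an elementary calculation in the finite cyclic group $\langle \pi_{B_i}(f_i)\rangle$ translates this into $v_\ell(\mathrm{ord}_{G/B_i}(f_i)) \geq w$. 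Setting $B = B_1 \cap B_2$, the order of $\pi_B(f_i)$ has $\ell$-valuation at least as large as that of $\pi_{B_i}(f_i)$ (passing to a smaller quotient cannot decrease orders), so $v_\ell(\gcd(\mathrm{ord}_{G/B}(f_1), \mathrm{ord}_{G/B}(f_2))) \geq w > v_\ell(\delta)$ and therefore this gcd does not divide $\delta$. By the Chinese remainder theorem in $\mathbb{Z}$, no integer $k$ then satisfies $\pi_B(h_j) = \pi_B(f_j)^k$ for both $j = 1, 2$ simultaneously; hence $\pi_B(h) \notin \langle \pi_B(f)\rangle$.

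The main obstacle is recognising that residual finiteness alone does not suffice in the main case: what one really needs is the strictly stronger control offered by cyclic subgroup separability, namely the ability to prescribe a lower bound on the $\ell$-adic valuation of the order of a chosen element in some finite quotient of $G$. Identifying the right cyclic subgroup $\langle f_i^{\ell^w}\rangle$ and the right outside element $f_i^{\ell^{w-1}}$, and matching these up with the CRT obstruction to $h \in \langle f\rangle$, is the delicate part of the argument.
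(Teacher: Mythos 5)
Your proof is correct, but it takes a genuinely different route from the paper's.

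The paper's own proof splits on whether every $f(x)$ has finite order. In the all-finite-order case it only uses residual finiteness: $f$ has some finite order $p$, for each $i \in \{0,\ldots,p-1\}$ a coordinate $x_i$ witnesses $h \neq f^i$, and one finite quotient separating the finitely many nontrivial elements $h(x_i)f(x_i)^{-i}$ from $1$ does the job. In the remaining case it picks an $x_0$ with $f(x_0)$ of infinite order, writes $h(x_0) = f(x_0)^{p_0}$ (or is immediately done via cyclic subgroup separability), finds another coordinate $x_1$ with $h(x_1) \neq f(x_1)^{p_0}$, and then defines a modulus $r$ by an explicit three-way case analysis on $f(x_1)$ before applying cyclic subgroup separability to $\langle f(x_0)^r\rangle$ and $\langle f(x_1)^r\rangle$.

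Your argument instead reduces to finite support, dispatches the case where a single coordinate already obstructs (using cyclic subgroup separability where the paper's Case 1 needs only residual finiteness), and in the main case reads off, via the pairwise criterion of the generalized Chinese remainder theorem (correctly extended to allow $n_i = \infty$), a bad pair of coordinates and a bad prime $\ell$ with $v_\ell(\gcd(n_1,n_2)) > v_\ell(k_1 - k_2)$. You then apply cyclic subgroup separability to the subgroups $\langle f_i^{\ell^w}\rangle$ to force $v_\ell(\mathrm{ord}_{G/B}(f_i)) \geq w$ for $i = 1,2$, and conclude via CRT in $\mathbb{Z}/\mathrm{ord}(f_1)\mathbb{Z} \times \mathbb{Z}/\mathrm{ord}(f_2)\mathbb{Z}$. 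I checked the order-theoretic steps: $f_i^{\ell^{w-1}} \notin \langle f_i^{\ell^w}\rangle$ iff $v_\ell(n_i) \geq w$, and the same equivalence in the finite quotient gives $v_\ell(\mathrm{ord}_{G/B_i}(f_i)) \geq w$; both are correct. What your approach buys is uniformity (no case split on whether $f$ has finite order, and no ad hoc choice of $r$) and a transparent isolation of exactly what cyclic subgroup separability gives beyond residual finiteness — the ability to force a prescribed $\ell$-adic valuation on the order of a chosen element in some finite quotient. What the paper's approach buys is that it avoids CRT and valuations entirely, and its first case gets away with residual finiteness alone, so the case analysis is somewhat more elementary even if bulkier.
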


\begin{proof}
We will write elements of $\bigoplus_{x \in \mathcal{X}} G$ as functions $g: \mathcal{X} \to G$ with finite support, with the group operations corresponding to pointwise multiplication and inversion. We will split the argument into two cases, depending on whether or not $f(x)$ has finite order for all $x \in \mathcal{X}$.

\begin{description}

\item[If $f(x)$ has finite order for all $x \in \mathcal{X}$] In this case, as $f$ has finite support, it follows that there exists an integer $p \geq 1$ such that $f(x)^p = 1$ for all $x \in \mathcal{X}$, i.e.\ $f^p = 1$. As $h \notin \langle f \rangle$, it follows that for each $i = 0,\ldots,p-1$, there exists $x_i \in \mathcal{X}$ such that $h(x_i) \neq f(x_i)^i$. Since $G$ is cyclic subgroup separable, and so residually finite, there exists a finite-index normal subgroup $B \lhd G$ such that $h(x_i) f(x_i)^{-i} \notin B$ for all $i$. But this implies that $\pi_B(h) \notin \{ \pi_B(f^i) \mid 0 \leq i \leq p-1 \} = \langle \pi_B(f) \rangle$, as required.

\item[If $f(x_0)$ has infinite order for some $x_0 \in \mathcal{X}$] If $h(x_0) \notin \langle f(x_0) \rangle$, then as $G$ is cyclic subgroup separable, there exists a finite-index normal subgroup $B \lhd G$ such that $h(x_0)B \notin \langle f(x_0)B \rangle$. Therefore, $\pi_B(h) \notin \langle \pi_B(f) \rangle$, and we are done. Otherwise, there exists a (unique, since $f(x_0)$ has infinite order) integer $p_0 \in \mathbb{Z}$ such that $h(x_0) = f(x_0)^{p_0}$. As $h \notin \langle f \rangle$, there exists $x_1 \in \mathcal{X}$ such that $h(x_1) \neq f(x_1)^{p_0}$.

Now define an integer $r \geq 1$ as follows. If $f(x_1)$ has finite order, let $r$ be this order. If $f(x_1)$ has infinite order and $h(x_1) \notin \langle f(x_1) \rangle$, let $r \geq 1$ be any integer. Otherwise, there exists a unique $p_1 \in \mathbb{Z}$, $p_1 \neq p_0$, such that $h(x_1) = f(x_1)^{p_1}$; in this case, let $r \geq 1$ be any integer not dividing $p_0-p_1$. It then follows from the definition of $r$ that for any $p \in \mathbb{Z}$, we have $h(x_1)f(x_1)^{-p} \notin \langle f(x_1)^r \rangle$ if $p \equiv p_0 \pmod{r}$, and $h(x_0)f(x_0)^{-p} \notin \langle f(x_0)^r \rangle$ otherwise.

Let $p' \in \{ 0,\ldots,r-1 \}$ be the integer such that $p_0 \equiv p' \pmod{r}$. As $G$ is cyclic subgroup separable, there exists a finite-index normal subgroup $B \lhd G$ such that we have $h(x_1)f(x_1)^{-p'}B \notin \langle (f(x_1)B)^r \rangle$, as well as $h(x_0)f(x_0)^{-p}B \notin \langle (f(x_0)B)^r \rangle$ for all $p \in \{ 0,\ldots,r-1 \} \setminus \{ p' \}$. Therefore, for all $p \in \mathbb{Z}$ we have either $h(x_1)B \neq (f(x_1)B)^p$ or $h(x_0)B \neq (f(x_0)B)^p$, and so $\pi_B(h) \neq \pi_B(f)^p$. Thus $\pi_B(h) \notin \langle \pi_B(f) \rangle$, as required. \qedhere

\end{description}
\end{proof}

\begin{prop} \label{p:AGcss}
Let $G$ be an abelian group, let $H$ be a group, and suppose that $G$ and $H$ are cyclic subgroup separable. Then $G \wr H$ is cyclic subgroup separable.
\end{prop}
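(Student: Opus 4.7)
Take $(h, b), (f, a) \in G \wr H$ with $(h, b) \notin \langle (f, a) \rangle$. Writing $(f, a)^m = (F_m, a^m)$ with $F_m = \prod_{i=0}^{m-1} f_{a^i} \in B_H := \bigoplus_H G$, and letting $d$ denote the order of $a$ in $H$ (with the convention $F_\infty := 1$), the plan is to split on whether $b \in \langle a \rangle$. If $b \notin \langle a \rangle$, cyclic subgroup separability of $H$ produces a finite quotient $H \to Q$ separating $b$ from $\langle a \rangle$, and composition with the projection $G \wr H \to H$ completes this case. Otherwise $b = a^n$, and because $G$ is abelian (so $F_{n + kd} = F_n \cdot F_d^k$ for all $k \in \mathbb{Z}$, using $a^d = 1$ when $d < \infty$), the condition reduces to $h' := h \cdot F_n^{-1} \notin \langle F_d \rangle$ in $B_H$.

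The next step is to construct a finite quotient via appropriate normal subgroups of finite index $B \lhd G$ and $K \lhd H$. First, apply Lemma~\ref{l:Acss} to $h', F_d \in B_H$ to obtain $B$ with $\pi_B(h') \notin \langle \pi_B(F_d) \rangle$ in $\bigoplus_H (G/B)$; letting $M$ be the order of $\pi_B(F_d)$, this provides witnesses $y_k \in H$ with $(h' F_d^{-k})(y_k) \notin B$ for each $k \in \{0, \ldots, M-1\}$. Second, since $G/B$ is abelian, the coset-sum formula $(g, x) \mapsto (\tilde g, \pi_K(x))$ with $\tilde g(xK) := \pi_B\bigl(\prod_{y \in xK} g(y)\bigr)$ defines a homomorphism $\Psi_K: G \wr H \to (G/B) \wr (H/K)$ into a finite group. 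I would invoke residual finiteness of $H$ to choose $K$ so that $\pi_K$ is injective on the finite set $\{y_0, \ldots, y_{M-1}\} \cup \supp(h') \cup \supp(F_d)$, and so that the order $d'$ of $\pi_K(a)$ equals $d$ when $d < \infty$ and strictly exceeds $|\{y \in H : h'(y) \notin B\}|$ when $d = \infty$.

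Finally, I would verify $\Psi_K((h, b)) \notin \langle \Psi_K((f, a)) \rangle$, which after translation by $\Psi_K((f, a))^{-n}$ reduces to $\widetilde{h'} \notin \langle \widetilde{F_{d'}} \rangle$ in $\bigoplus_{H/K}(G/B)$. The key structural observation is that $\widetilde{F_{d'}}$, being the base component of $\Psi_K((f, a))^{d'}$, lies in the base group and is therefore fixed by the $\pi_K(a)$-shift, so every element of $\langle \widetilde{F_{d'}} \rangle$ is constant on $\pi_K(a)$-orbits in $H/K$. When $d < \infty$ (so $d' = d$), injectivity of $\pi_K$ gives $\widetilde{h' F_d^{-k}}(y_k K) = \pi_B\bigl((h' F_d^{-k})(y_k)\bigr) \neq 0$ for each $k = 0, \ldots, M-1$, which extends to all $k \in \mathbb{Z}$ since the order of $\widetilde{F_d}$ divides $M$. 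When $d = \infty$ (so $F_d = 1$ and $M = 1$), injectivity forces $|\supp(\widetilde{h'})| \leq |\{y : h'(y) \notin B\}| < d'$, so pigeonhole guarantees that the $\pi_K(a)$-orbit of $y_0 K$ contains a coset on which $\widetilde{h'}$ vanishes, while $\widetilde{h'}(y_0 K) \neq 0$; hence $\widetilde{h'}$ is non-constant on this orbit and cannot lie in the orbit-constant subgroup $\langle \widetilde{F_{d'}} \rangle$.

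I expect the main obstacle to be the $d = \infty$ subcase of the final verification: Lemma~\ref{l:Acss} controls only powers of $\widetilde{F_d} = 1$, whereas the subgroup we must avoid in the finite quotient is generated by the generally nontrivial $\widetilde{F_{d'}}$. The remedy, the pigeonhole-and-invariance argument above, works precisely because $G/B$ is abelian — the same ingredient that makes $\Psi_K$ a well-defined homomorphism.
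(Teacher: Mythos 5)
Your proposal is correct and follows essentially the same strategy as the paper's proof: reduce to the case $b \in \langle a\rangle$, apply Lemma~\ref{l:Acss} to produce a finite-index $B \lhd G$, choose a finite-index $K \lhd H$ (the paper calls it $N$) injective on an appropriate finite subset of $H$ and with $\pi_K(a)$ of a controlled order, and then verify separation in the finite quotient $(G/B)\wr(H/K)$. The only stylistic divergence is in the infinite-order subcase: the paper separates a single coordinate where $h$ and $\hat f$ disagree and then argues via the divisibility of $|\mathrm{supp}(g^{(N)})|$ by the order of $aN$, whereas you invoke Lemma~\ref{l:Acss} uniformly (with $F_\infty = 1$, which degenerates to residual finiteness of $G$) and argue via non-constancy of $\widetilde{h'}$ on the $\pi_K(a)$-orbit of $y_0K$ versus orbit-constancy of every element of $\langle\widetilde{F_{d'}}\rangle$; both use the same underlying shift-invariance of $\widetilde{F_{d'}}$. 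One small imprecision worth noting: after translating by $\Psi_K((f,a))^{-n}$, the base element you obtain is actually the $\pi_K(a)^{-n}$-shift of $\widetilde{h'}$ rather than $\widetilde{h'}$ itself, but since $\langle\widetilde{F_{d'}}\rangle$ is shift-invariant the two membership conditions are equivalent, so the argument is unaffected. Also, the phrase ``lies in the base group and is therefore fixed by the $\pi_K(a)$-shift'' compresses the real reason — the invariance comes from $\Psi_K((f,a))^{d'}$ commuting with $\Psi_K((f,a))$, together with abelianness of $G/B$ making conjugation by $\Psi_K((f,a))$ act on base elements purely as the $\pi_K(a)$-shift — and would be worth spelling out in a final write-up.
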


\begin{proof}
Throughout this proof, we write the group operation in $G$ additively. It is easy to check that
\[
(f,a)^n = (g,a^n)
\]
for any $n \in \mathbb{Z}$, where
\[
g = \begin{cases} - (f_{a^{-1}} + \cdots + f_{a^n}) & \text{if } n < 0, \\ 0 & \text{if } n = 0, \\ f + f_a + \cdots + f_{a^{n-1}} & \text{if } n > 0. \end{cases}
\]
In particular, if $a^n = 1$ in $H$, then $(f,a)^n = (g,1)$ and $g: H \to G$ is invariant under right multiplication by $a$, i.e.\ $g(x) = g(xa)$ for all $x \in H$.

Let $(f,a)$ and $(h,c)$ be two elements of $G \wr H$, and suppose that $(h,c)$ is not a power of $(f,a)$. If $c \notin \langle a \rangle$, then, as $H$ is cyclic subgroup separable, there exist a finite-index normal subgroup $N \lhd H$ such that $cN \notin \langle aN \rangle$, and so we are done. Thus, we may assume, without loss of generality, that $c = a^m$ for some $m \in \mathbb{Z}$. We will consider two cases, depending on whether or not $a$ has finite order in $H$.

\begin{description}

\item[If $a$ has order $n < \infty$ in $H$] In this case, we have $(f,a)^n = (g,1)$ for some function $g: H \to G$ of finite support. Without loss of generality, we may assume that $0 \leq m \leq n-1$; let $k = h-f-f_a-\cdots-f_{a^{m-1}}: H \to G$. As $H$ is cyclic subgroup separable, it is residually finite, and so there exists a finite-index normal subgroup $N \lhd H$ such that the restriction of the quotient map $H \to H/N$ to $\supp(k) \cup \supp(g) \cup \langle a \rangle$ is injective. 

Moreover, as for any $q \in \mathbb{Z}$ we have
\[
(h,a^m) = (h,c) \neq (f,a)^{qn+m} = (f+f_a+\cdots+f_{a^{m-1}}+qg,a^m) = (h-k+qg,a^m),
\]
it follows that $k \notin \langle g \rangle$. By Lemma~\ref{l:Acss}, there exists a finite-index normal subgroup $B \lhd G$ such that the image of $(k,1)$ in $(G/B) \wr H$ is not a power of the image of $(g,1)$. By the choice of $N$, it thus follows that $\pi(k,1) \notin \langle \pi(g,1) \rangle$, where $\pi: G \wr H \to (G/B) \wr (H/N)$ is the quotient map.

We claim that $\pi(h,c) \notin \langle \pi(f,a) \rangle$ -- that is, $\pi(h,c) \neq \pi(f,a)^p$ for any $p$. Indeed, let $p \in \mathbb{Z}$. If $p \not\equiv m \pmod{n}$, then $cN \neq (aN)^p$ and we are done. Otherwise, we have $p = nq+m$ for some $q \in \mathbb{Z}$, and so $(f,a)^p = (h-k+qg,a^m)$. By the choice of $B$ and $N$, we have $\pi(-k+qg,1) \neq 1$, hence $\pi(h,c) \neq \pi(f,a)^p$ as well. Thus, $\pi(h,c) \notin \langle \pi(f,a) \rangle$, as claimed -- therefore, as $(G/B) \wr (H/N)$ is finite, we are done.

\item[If $a$ has infinite order in $H$] Let $\hat{f}: H \to G$ be such that $(f,a)^m = (\hat{f},a^m)$. As $H$ is cyclic subgroup separable, and so residually finite, and as $\supp(\hat{f})$ and $\supp(h)$ are finite, there exists a finite-index normal subgroup $N \lhd H$ such that the restriction of the quotient map $H \to H/N$ to $\supp(\hat{f}) \cup \supp(h) \cup \{ 1,a,\ldots,a^{|\supp(\hat{f})| + |\supp(h)|} \}$ is injective. In particular, since $a$ has infinite order, $aN$ has order $n > |\supp(\hat{f})|+|\supp(h)|$ in $H/N$. Moreover, as $(h,a^m) = (h,c) \neq (f,a)^m = (\hat{f},a^m)$, there exists $x_0 \in H$ such that $h(x_0) \neq \hat{f}(x_0)$. As $G$ is cyclic subgroup separable, and so residually finite, there exists $B \lhd G$ such that $h(x_0)+B \neq \hat{f}(x_0)+B$.

We claim that $\pi(h,c) \notin \langle \pi(f,a) \rangle$, where $\pi: G \wr H \to (G/B) \wr (H/N)$ is the quotient map as before -- that is, we claim that $\pi(h,c) \neq \pi(f,a)^p$ for any $p$. Indeed, let $p \in \mathbb{Z}$. If $p \not\equiv m \pmod{n}$, then $cN \neq (aN)^p$ and we are done. Otherwise, we have $p = nq+m$ for some $q \in \mathbb{Z}$, and so $\pi(f,a)^p = (\hat{f}^{(N)}+qg^{(N)},(aN)^m)$, where $g: H \to G$ is such that $(f,a)^n = (g,a^n)$. Now if $qg^{(N)}(xN) = 0$ for all $xN \in H/N$, then $\pi(f,a)^p = (\hat{f}^{(N)},(aN)^m) \neq (h^{(N)},cN) = \pi(h,c)$ since $\hat{f}^{(N)}(x_0N) = \hat{f}(x_0)+B \neq h(x_0)+B = h^{(N)}(x_0N)$ by the choices of $N$ and $B$, and so we are done. Thus, we may assume that $qg^{(N)}(xN) \neq 0$ for some $xN \in H/N$. But as $aN$ has order $n$ in $H/N$ and so $(f^{(N)},aN)^n = (g^{(N)},1)$, it follows that $g^{(N)}(xN) = g^{(N)}((xN)(aN))$ for all $xN \in H/N$, and so $|\supp(g^{(N)})|$ (as well as $|\supp(qg^{(N)})|$) is divisible by $n$: in particular,
\[
|\supp(qg^{(N)})| \geq n > |\supp(\hat{f})|+|\supp(h)| \geq |\supp(\hat{f}^{(N)})|+|\supp(h^{(N)})|.
\]
But this implies that $\hat{f}^{(N)}+qg^{(N)}$ and $h^{(N)}$ have different supports, and so $\pi(f,a)^p \neq \pi(h,c)$. Thus $\pi(h,c) \notin \langle \pi(f,a) \rangle$, as claimed, and so, as $(G/B) \wr (H/N)$ is finite, we are done. \qedhere

\end{description}
\end{proof}

\begin{lem} \label{l:fincss}
Let $G$ and $H$ be groups with $H$ finite. Then $G \wr H$ is cyclic subgroup separable if and only if $G$ is cyclic subgroup separable.
\end{lem}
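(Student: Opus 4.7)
The forward direction is immediate: the map $g \mapsto (\delta_g, 1)$, with $\delta_g(1) = g$ and $\delta_g(x) = 1$ for $x \neq 1$, realises $G$ as a subgroup of $G \wr H$, and cyclic subgroup separability is inherited by subgroups (a finite quotient of $G \wr H$ separating $g$ from $\langle a \rangle$ restricts to a finite quotient of $G$ doing the same).

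For the converse, the plan is to follow the finite-order branch of the proof of Proposition~\ref{p:AGcss}, observing that the assumption that $G$ was abelian played only a notational role there. Let $(f, a), (h, c) \in G \wr H$ with $(h, c) \notin \langle (f, a) \rangle$. If $c \notin \langle a \rangle$, the canonical projection $G \wr H \twoheadrightarrow H$ onto the finite group $H$ already separates $(h, c)$ from $\langle (f, a) \rangle$. Otherwise $c = a^m$ with $0 \leq m < n$, where $n$ is the (finite) order of $a$ in $H$. Writing $(f, a)^n = (g, 1)$ and $(f, a)^m = (\hat f, a^m)$ for suitable $g, \hat f \in \bigoplus_{x \in H} G$, a direct multiplication gives $(f, a)^{qn+m} = (\hat f, a^m)(g^q, 1) = (\hat f \, g^q, a^m)$, so the hypothesis $(h, c) \notin \langle (f, a) \rangle$ becomes $\hat f^{-1} h \notin \langle g \rangle$ inside $\bigoplus_{x \in H} G$.

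At this point I would invoke Lemma~\ref{l:Acss} — whose statement and proof make no use of $G$ being abelian — to produce a finite-index normal subgroup $B \lhd G$ with $\pi_B(\hat f^{-1} h) \notin \langle \pi_B(g) \rangle$. The induced quotient $\pi: G \wr H \to (G/B) \wr H$ is finite (because both $G/B$ and $H$ are), and it separates $(h, c)$ from every power $(f, a)^p$ by cases on $p \bmod n$: powers with $p \not\equiv m \pmod{n}$ are distinguished by the $H$-coordinate $a^p \neq a^m$, while those with $p = qn+m$ are distinguished in the base group precisely by the choice of $B$. The only subtlety — really a bookkeeping matter rather than an obstacle — is that once $G$ is allowed to be non-abelian the pointwise product in $G^H$ ceases to be commutative, so expressions like $(f, a)^n$ must be written in the slightly less symmetric form $(f_{a^{n-1}} \cdots f_a f, a^n)$; no genuinely new idea beyond Lemma~\ref{l:Acss} is required.
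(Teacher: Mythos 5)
Your proof is correct and takes essentially the same route as the paper's: reduce to the case $c = a^m$ via the finite quotient $H$, write $(f,a)^{qn+m} = (\hat f g^q, a^m)$ to reformulate the hypothesis as $k := \hat f^{-1}h \notin \langle g\rangle$ in $\bigoplus_{x\in H} G$, apply Lemma~\ref{l:Acss} (which indeed requires no commutativity) to find $B \lhd G$ of finite index, and check the two cases on $p \bmod n$ in the finite quotient $(G/B)\wr H$. Your observation about the order of the factors in $\hat f = f_{a^{m-1}}\cdots f_a f$ and in $k = \hat f^{-1}h$ is exactly the care the paper takes as well.
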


\begin{proof}
Since $G$ is a subgroup of $G \wr H$, the $(\Rightarrow)$ direction is immediate.

For $(\Leftarrow)$, let $(f,a),(h,c) \in G \wr H$ be elements such that $(h,c) \notin \langle (f,a) \rangle$. If $c \notin \langle a \rangle$, then the image of $(h,c)$ in the finite quotient $H$ of $G \wr H$ is not a power of the image of $(f,a)$, and so we are done. Thus, we may assume that $c = a^m$ for some $m \in \mathbb{Z}$. Let $n < \infty$ be the order of $a$ in $H$; without loss of generality, assume that $0 \leq m \leq n-1$.

Let $g: H \to G$ be such that $(f,a)^n = (g,1)$. Given any $q \in \mathbb{Z}$, we have
\[
(h,a^m) = (h,c) \neq (f,a)^{qn+m} = (f_{a^{m-1}} \cdots f_afg^q,a^m)
\]
and so $k \notin \langle g \rangle$, where $k = f^{-1} f_a^{-1} \cdots f_{a^{m-1}}^{-1}h$. By Lemma~\ref{l:Acss}, it follows that there exists a finite-index normal subgroup $B \lhd G$ such that $\pi(k,1) \notin \langle \pi(g,1) \rangle$, where $\pi: G \wr H \to (G/B) \wr H$ is the quotient map.

We claim that $\pi(h,c) \notin \langle \pi(f,a) \rangle$ -- that is, $\pi(h,c) \neq \pi(f,a)^p$ for any $p \in \mathbb{Z}$. Indeed, let $p \in \mathbb{Z}$. If $p \not\equiv m \pmod{n}$, then $c \neq a^p$ and we are done. Otherwise, we have $p = nq+m$ for some $q \in \mathbb{Z}$, and so $(f,a)^p = (hk^{-1}g^q,a^m)$. By the choice of $B$, we have $\pi(k^{-1}g^q,1) \neq 1$, hence $\pi(h,c) \neq \pi(f,a)^p$ as well. Thus, $\pi(h,c) \notin \langle \pi(f,a) \rangle$, as claimed -- therefore, as $(G/B) \wr H$ is finite, we are done.
\end{proof}

\begin{proof}[Proof of Theorem~\ref{t:css}]
Suppose first that $G \wr H$ is cyclic subgroup separable. Since subgroups of cyclic subgroup separable groups are themselves cyclic subgroup separable, it follows that $G$ and $H$ are cyclic subgroup separable. Moreover, as $G \wr H$ is cyclic subgroup separable, it is residually finite, and therefore either $G$ is abelian or $H$ is finite by \cite[Theorem 3.2]{gruenberg}.

Conversely, suppose that $G$ and $H$ are cyclic subgroup separable, and either $G$ is abelian or $H$ is finite. If $G$ is abelian, then $G \wr H$ is cyclic subgroup separable by Proposition~\ref{p:AGcss}. On the other hand, if $H$ is finite, then $G \wr H$ is cyclic subgroup separable by Lemma~\ref{l:fincss}.
\end{proof}

\section{The counterexamples} \label{s:counterex}

In this section, we give examples of finitely generated groups that are residually finite but not equationally Noetherian, or vice versa. In particular, we prove Proposition~\ref{p:RF=/>EN} and Theorem~\ref{t:EN=/>RF}.

\subsection{Residually torsion-free nilpotent but not equationally Noetherian} \label{ss:RTFNnotEN}

We first give an example of a subgroup $G_1 \leq \prod_{r=1}^\infty UT_r(\mathbb{Z})$ in Proposition~\ref{p:RF=/>EN}\ref{i:pRF-ut}. Note that since the subgroup $UT_r(\mathbb{Z}) \leq GL_r(\mathbb{Z})$ can be easily seen to be torsion-free and $(r-1)$-step nilpotent, it follows that such a group $G_1$ is residually torsion-free nilpotent. Since $G_1$ is also finitely generated, it is thus residually finitely generated torsion-free nilpotent. But a finitely generated torsion-free nilpotent group is residually a finite $p$-group for any fixed prime $p$ \cite[Theorem 2.1 (i)]{gruenberg}, and so such a group $G_1$ must be residually a finite $p$-group for any fixed prime $p$. Of course, this also implies that $G_1$ is residually finite.

Thus, in order to prove Proposition~\ref{p:RF=/>EN}\ref{i:pRF-ut}, we only need to construct a finitely generated subgroup of $UT_r(\mathbb{Z})$ that is not equationally Noetherian. The example we use was inspired by the argument of A.~Bier in \cite{bier}.

Given an integer $r \geq 1$, we will write $I_r$ for the $r \times r$ identity matrix, and $E^{(i,j)}_r$ for the matrix whose $(i,j)$-th entry is $1$ and all other entries are $0$, where $1 \leq i < j \leq r$. It is easy to verify that
\begin{equation} \label{e:EE}
E^{(i,j)}_r E^{(k,\ell)}_r = \begin{cases} E^{(i,\ell)}_r & \text{if } j = k, \\ 0 & \text{otherwise}. \end{cases}
\end{equation}
We will write $I$ for $I_r$ and $E^{(i,j)}$ for $E^{(i,j)}_r$ if the value of $r$ is clear. Notice that we have $I_r+E_r \in UT_r(\mathbb{Z})$, where $E_r$ is any $\mathbb{Z}$-linear combination of the $E^{(i,j)}_r$, $1 \leq i < j \leq r$.

\begin{lem} \label{l:UT}
Let $r \geq 2$, and let $B = I + \sum_{i=1}^{r-1} E^{(i,i+1)} \in UT_r(\mathbb{Z})$. Then:
\begin{enumerate}[label=\textup{(\roman*)}]
\item \label{i:lUT1} $[I+E^{(1,2)},\overbrace{B,\ldots,B}^n] = I+E^{(1,n+2)}$ for $0 \leq n \leq r-2$, and $[I+E^{(1,2)},\overbrace{B,\ldots,B}^n] = I$ for $n \geq r-1$;
\item \label{i:lUT2} for any $n \geq 0$, $[I+E^{(1,2)},\overbrace{B,\ldots,B}^n,I+E^{(r-1,r)}] = I$ if and only if $n \neq r-3$.
\end{enumerate}
\end{lem}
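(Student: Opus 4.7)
The plan is to prove both parts via a pair of single-step matrix-unit computations followed by induction on $n$. Write $A_k = I + E^{(1,k)}$ for $2 \leq k \leq r$, $N = B - I = \sum_{i=1}^{r-1} E^{(i,i+1)}$, and $C = I + E^{(r-1,r)}$. The heart of the proof is the pair of identities
\[
[A_k, B] = \begin{cases} A_{k+1} & \text{if } 2 \leq k \leq r-1, \\ I & \text{if } k = r, \end{cases} \qquad [A_k, C] = \begin{cases} I + E^{(1,r)} & \text{if } k = r-1, \\ I & \text{otherwise.} \end{cases}
\]
Using \eqref{e:EE}, one expands $A_k B = I + N + E^{(1,k)} + E^{(1,k)} N$ and $B A_k = I + N + E^{(1,k)} + N E^{(1,k)}$. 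Here $N E^{(1,k)} = 0$ always (no $E^{(i,i+1)}$ has $i+1 = 1$), while $E^{(1,k)} N = E^{(1,k+1)}$ for $k \leq r-1$ and vanishes for $k = r$. Thus when $k \leq r - 1$, $[A_k, B] = (B A_k)^{-1}(A_k B) = I + (B A_k)^{-1} E^{(1,k+1)}$. Expanding $(B A_k)^{-1} = (I + N + E^{(1,k)})^{-1}$ as a finite geometric series and observing via \eqref{e:EE} that $(N + E^{(1,k)}) E^{(1,k+1)} = 0$, the inverse acts trivially on $E^{(1,k+1)}$, giving $[A_k, B] = A_{k+1}$. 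The identity for $[A_k, C]$ is proved by exactly the same recipe with $E^{(r-1,r)}$ in place of $N$; here the surviving cross-term $E^{(1,k)} E^{(r-1,r)}$ equals $E^{(1,r)}$ precisely when $k = r-1$ and vanishes otherwise, while $E^{(r-1,r)} E^{(1,k)} = 0$ always.

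Given these identities, part~\ref{i:lUT1} follows by induction on $n$ starting from $[I+E^{(1,2)}] = A_2$: each further commutator with $B$ advances the column index of the matrix unit by one until it saturates at $r$, after which every subsequent bracket is trivial. Part~\ref{i:lUT2} is then immediate: for $0 \leq n \leq r-2$ the inner bracket equals $A_{n+2}$, and $[A_{n+2}, C] \neq I$ precisely when $n + 2 = r - 1$, i.e.\ $n = r - 3$; for $n \geq r - 1$ the inner bracket is already $I$, so the outer commutator is trivially $I$, consistent with $n \neq r-3$. The edge case $r = 2$ is automatic, since there $n = r - 3 = -1$ is unattainable and one checks directly that all relevant commutators equal $I$.

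The only real obstacle is careful bookkeeping of which matrix-unit products vanish and verification that the geometric-series inverses stabilise correctly. This is kept under control because \eqref{e:EE} makes $E^{(i,j)} E^{(k,\ell)} \neq 0$ a highly restrictive condition, and the sparsity of $A_k$, $B$, and $C$ ensures that each relevant inverse stabilises after at most one or two nonzero terms, so no higher-order contributions appear.
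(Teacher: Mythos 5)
Your proof is correct and follows essentially the same route as the paper's: both reduce to the single-step matrix-unit computations $[I+E^{(1,k)}, B] = I+E^{(1,k+1)}$ (with collapse at $k=r$) and the analogous identity with $I+E^{(r-1,r)}$ using \eqref{e:EE}, then conclude by induction. The only cosmetic difference is that you compute $(BA_k)^{-1}$ via the nilpotent geometric series and check it acts trivially on $E^{(1,k+1)}$, whereas the paper rewrites $[X,B]=Y$ as $XB = BXY$ to avoid inverses altogether.
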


\begin{proof} ~
\begin{enumerate}[label=(\roman*)]
\item We will prove the first statement by induction on $n$. The base case, $n = 0$, is trivial.

Let $n \geq 1$. By the induction hypothesis, it is enough to show that $[I+E^{(1,n+1)},B] = I+E^{(1,n+2)}$, which is equivalent to $(I+E^{(1,n+1)})B = B(I+E^{(1,n+1)})(I+E^{(1,n+2)})$. But it is easy to check, using \eqref{e:EE}, that both sides are equal to $B+E^{(1,n+1)}+E^{(1,n+2)}$, as required.

It is clearly enough to prove the second statement for $n = r-1$. Therefore, as a consequence of the first statement, it suffices to show that $I+E^{(1,r)}$ commutes with $B$. But it is easy to check, again using \eqref{e:EE}, that both $(I+E^{(1,r)})B$ and $B(I+E^{(1,r)})$ are equal to $B+E^{(1,r)}$.

\item If $n \geq r-1$, then the result follows trivially from part \ref{i:lUT1}. Thus, we may assume that $0 \leq n \leq r-2$. By part \ref{i:lUT1}, it is enough to show that $I+E^{(1,n+2)}$ and $I+E^{(r-1,r)}$ commute if and only if $n \neq r-3$, but this follows easily from \eqref{e:EE}. Indeed, if $n = r-3$ then $(I+E^{(1,n+2)})(I+E^{(r-1,r)}) = I+E^{(1,r-1)}+E^{(r-1,r)}+E^{(1,r)}$ whereas $(I+E^{(r-1,r)})(I+E^{(1,n+2)}) = I+E^{(1,r-1)}+E^{(r-1,r)}$. On the other hand, if $n \neq r-3$ then $(I+E^{(1,n+2)})(I+E^{(r-1,r)}) = I+E^{(1,n+2)}+E^{(r-1,r)} = (I+E^{(r-1,r)})(I+E^{(1,n+2)})$. \qedhere
\end{enumerate}
\end{proof}

We now construct the group $G_1$ as follows. Let $\widehat{G} = \prod_{k=1}^\infty UT_{2^k}(\mathbb{Z})$. Let
\[
\mathbf{A} = \left( I_{2^k}+E^{(1,2)}_{2^k} \right)_{k=1}^\infty, \quad \mathbf{B} = \left( I_{2^k}+\sum_{i=1}^{2^k-1} E^{(i,i+1)}_{2^k} \right)_{k=1}^\infty \quad \text{and} \quad \mathbf{C} = \left( I_{2^k}+E^{(2^k-1,2^k)}_{2^k} \right)_{k=1}^\infty,
\]
and let $G_1 = \langle \mathbf{A}, \mathbf{B}, \mathbf{C} \rangle < \widehat{G}$. 

\begin{proof}[Proof of Proposition~\ref{p:RF=/>EN}\ref{i:pRF-ut}]
As $G_1$ is finitely generated by definition, it is enough to show that it is not equationally Noetherian. Consider $S = \{ s_n(X_1,X_2,X_3) \mid n \geq 0 \} \subset F_3(X_1,X_2,X_3)$, where $s_n(X_1,X_2,X_3) = [X_1, \overbrace{X_2,\ldots,X_2}^n, X_3]$. For each $m \geq 0$, let $\mathbf{A}_m = [\mathbf{A},\overbrace{\mathbf{B},\ldots,\mathbf{B}}^m] \in G_1$. It then follows easily from Lemma~\ref{l:UT}\ref{i:lUT2} that $s_n(\mathbf{A}_m,\mathbf{B},\mathbf{C}) = 1$ if and only if $n+m+3$ is not a power of $2$. In particular, we have $(\mathbf{A}_{2^k-2},\mathbf{B},\mathbf{C}) \in V_{G_1}(S_{2^k-2}) \setminus V_{G_1}(S)$ for all $k \geq 1$, where $S_N = \{ s_n \mid 0 \leq n \leq N \}$ for any $N \geq 0$. Therefore, $V_{G_1}(S_{2^k-2}) \neq V_{G_1}(S)$ for any $k \geq 1$, and so $V_{G_1}(\overline{S}) \neq V_{G_1}(S)$ for any finite subset $\overline{S} \subset S$. Thus $G_1$ is not equationally Noetherian, as required.
\end{proof}

\subsection{Conjugacy separable but not equationally Noetherian} \label{ss:CSnotEN}

Here we give a construction of groups that are conjugacy separable and cyclic subgroup separable, but not equationally Noetherian -- in particular, we prove Proposition~\ref{p:RF=/>EN}\ref{i:pRF-wp}. Our construction is exemplified by the group $C_2 \wr (C_2 \wr \mathbb{Z})$. Indeed, this group is cyclic subgroup separable by two applications of Theorem~\ref{t:css}, and thus conjugacy separable by two applications of \cite[Theorem 1]{remwp}, and the group $C_2 \wr \mathbb{Z}$ has an infinite locally finite subgroup $\bigoplus_{j = -\infty}^\infty C_2$.

To deal with equational Noetherianity, we use the following result.

\begin{lem} \label{l:GwrHnen}
Let $G$ and $H$ be non-trivial groups. Suppose that $H$ has an infinite locally finite subgroup $K$. Then $G \wr H$ is not strongly equationally Noetherian. Moreover, if $H$ has a finitely generated subgroup $\overline{H}$ such that $K \cap \overline{H}$ is infinite, then $G \wr H$ is not equationally Noetherian.
\end{lem}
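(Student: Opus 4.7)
The plan is to produce, in both parts, witnesses showing that the solution sets of natural finite truncations of an infinite system in $G \wr H$ strictly decrease in the limit. I would begin by using that $K$ is infinite and locally finite to inductively extract elements $k_1, k_2, \ldots \in K$ with $k_{n+1} \notin F_n := \langle k_1, \ldots, k_n \rangle$; each $F_n$ is finite by local finiteness, and the chain $F_1 \lneq F_2 \lneq \cdots$ is strictly ascending. Fix also a non-trivial element $g \in G$, and for each finite subgroup $F \leq K$ let $f_F : H \to G$ be the finitely-supported function equal to $g$ on $F$ and to $1$ outside $F$; by construction $(f_F)_k = f_F$ for every $k \in F$ (since right multiplication by $k$ permutes the coset $F$), whereas $(f_F)_k \neq f_F$ whenever $k \notin F$ because $F k \neq F$.

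For the first assertion I would take the system with coefficients
\[
S = \{ s_n(X) \mid n \geq 1 \} \subseteq F_1(X) * (G \wr H), \qquad s_n(X) = [X, k_n],
\]
interpreting each $k_n$ as $(\mathbf{1}, k_n) \in G \wr H$, where $\mathbf{1} : H \to G$ denotes the constant function $1$. A direct calculation using $(f, a)(g, b) = (f_b g, ab)$ gives $(f, 1)(\mathbf{1}, k) = (f_k, k)$ and $(\mathbf{1}, k)(f, 1) = (f, k)$, so $(f, 1)$ commutes with $(\mathbf{1}, k)$ if and only if $f_k = f$. Applying this with $f = f_{F_N}$, the element $X_N := (f_{F_N}, 1)$ satisfies $s_n(X_N) = 1$ exactly when $k_n \in F_N$; in particular $X_N \in V_{G \wr H}(\{s_1, \ldots, s_N\})$, while $[X_N, k_{N+1}] \neq 1$ because $k_{N+1} \notin F_N$. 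Since every finite subset $S_0 \subseteq S$ is contained in some $\{s_1, \ldots, s_M\}$, the element $X_M$ witnesses $V_{G \wr H}(S_0) \supsetneq V_{G \wr H}(S)$, so $G \wr H$ is not strongly equationally Noetherian.

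For the moreover clause I would run the same construction inside $\overline{H}$: because $K \cap \overline{H}$ is infinite and locally finite (as a subgroup of $K$), the same inductive procedure can be carried out choosing the $k_n$ from $K \cap \overline{H}$. Fixing generators $y_1, \ldots, y_m$ of $\overline{H}$, I would express $k_n = w_n(y_1, \ldots, y_m)$ for suitable words $w_n \in F_m$, and consider the coefficient-free system
\[
S' = \{ [X, w_n(Y_1, \ldots, Y_m)] \mid n \geq 1 \} \subseteq F_{m+1}(X, Y_1, \ldots, Y_m).
\]
For any finite $S'_0 \subseteq S'$ contained in the first $M$ equations, substituting $Y_i = y_i$ (viewed in $G \wr H$) and $X = X_M$ reduces the check to the previous calculation and yields $V_{G \wr H}(S'_0) \supsetneq V_{G \wr H}(S')$, establishing that $G \wr H$ is not equationally Noetherian.

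The only genuinely delicate point is the commutator computation in the wreath product: one must be explicit that $(f, 1)$ commutes with the coefficient $(\mathbf{1}, k)$ iff $f$ is invariant under right translation by $k^{-1}$, and that for $f = f_{F_N}$ this invariance amounts to $F_N k = F_N$, hence (since $1 \in F_N$) to $k \in F_N$. Once this observation is recorded, both parts amount to routine bookkeeping about the two sequences $(F_n)$ and $(X_N)$.
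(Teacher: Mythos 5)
Your proof is correct and takes essentially the same approach as the paper: the paper's witness $g^{(K_N)}=\prod_{a\in K_N}a^{-1}ga$ is exactly your $(f_{F_N},1)$, and both arguments choose elements $h_n$ (your $k_n$) along a strictly ascending chain of finite subgroups of $K$, use the equations $[X,h_n]$ (with coefficients for the first claim, and rewritten coefficient-free via words in the generators of $\overline{H}$ for the second), and observe that $(f_F,1)$ commutes with $k\in H$ precisely when $k\in F$.
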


\begin{proof}
Since $K$ is infinite and locally finite, there exists an infinite strictly ascending series $\{1\} = K_0 \lneq K_1 \lneq K_2 \lneq \cdots$ of finite subgroups $K_n < K$. For each $n \geq 0$, let $h_n \in K_{n+1} \setminus K_n$ be an arbitrary element. Viewing $H$ as a subgroup of $G \wr H$, consider the system of equations
\[
S = \{ [X,h_n] \mid n \geq 0 \} \subset (G \wr H) * F_1(X).
\]
If $G \wr H$ was strongly equationally Noetherian, then clearly $V_{G \wr H}(S) = V_{G \wr H}(S_N)$ for some $N \geq 0$, where
\[
S_N = \{ [X,h_n] \mid 0 \leq n < N \} \subset S.
\]
We claim that this is not the case.

Let $g \in G$ be an arbitrary non-identity element. Given a finite subset $A \subset H$, we define $g^{(A)} := \prod_{a \in A} a^{-1}ga \in G \wr H$; note that, as $a^{-1}ga$ and $b^{-1}gb$ commute for any distinct $a,b \in H$, this product is uniquely defined, i.e.\ the order of multiplication does not matter. It is then clear that $g^{(A)} \neq g^{(B)}$ whenever $A \neq B$, and that $h^{-1}g^{(A)}h = g^{(Ah)}$ for any $h \in H$. In particular, if $A$ is a subgroup of $H$, then (for any $h \in H$) $[g^{(A)},h] = 1$ if and only if $h \in A$. But this shows that, for any $N \geq 0$, $g^{(K_N)} \in V_{G \wr H}(S_N)$, but $g^{(K_N)} \notin V_{G \wr H}(S)$ as $[g^{(K_N)},h_N] \neq 1$. Thus $V_{G \wr H}(S) \neq V_{G \wr H}(S_N)$ for any $N \geq 0$, as claimed. Hence $G \wr H$ is not strongly equationally Noetherian.

Now suppose that there exists a finitely generated subgroup $\overline{H} = \langle \overline{h}_1, \ldots, \overline{h}_k \rangle \leq H$ such that $K \cap \overline{H}$ is infinite. Then, after replacing $K$ with $K \cap \overline{H}$ if necessary, we may assume that $K \subseteq \overline{H}$. 
As $\overline{H}$ has a finite generating set $\{ \overline{h}_1, \ldots, \overline{h}_k \}$, for each $n \geq 0$ there exists a word $\varphi_n(Y_1,\ldots,Y_k) \in F_k(Y_1,\ldots,Y_k)$ such that $\varphi_n(\overline{h}_1, \ldots, \overline{h}_k) = h_n$.
Consider the system of equations
\[
S' = \{ [X,\varphi_n(Y_1,\ldots,Y_k)] \mid n \geq 0 \} \subset F_{k+1}(X,Y_1,\ldots,Y_k).
\]
Then, as before, it follows that $(g^{(K_N)},\overline{h}_1,\ldots,\overline{h}_k) \in (G \wr H)^{k+1}$ is an element of $V_{G \wr H}(S_N') \setminus V_{G \wr H}(S')$, where $S_N' = \{ [X,\varphi_n(Y_1,\ldots,Y_k)] \mid 0 \leq n < N \} \subset S'$. Thus $G \wr H$ is not equationally Noetherian, as required.
\end{proof}

\begin{proof}[Proof of Proposition~\ref{p:RF=/>EN}\ref{i:pRF-wp}]
As $G$ is non-trivial and $H$ is a finitely generated group with an infinite locally finite subgroup, the first part follows from Lemma~\ref{l:GwrHnen}.

For the second part, suppose in addition that $G$ is finitely generated abelian and $H$ is cyclic subgroup separable and conjugacy separable. Then $G$ is also conjugacy separable, and so $G \wr H$ is conjugacy separable by a result of V.~P.~Remeslennikov \cite[Theorem 1]{remwp}. Moreover, $G$ is cyclic subgroup separable, and so $G \wr H$ is cyclic subgroup separable by Theorem~\ref{t:css}.
\end{proof}

\subsection{Equationally Noetherian but not residually finite} \label{ss:ENnotRF}

We now construct finitely presented groups that are equationally Noetherian but not residually finite: namely, we prove Theorem~\ref{t:EN=/>RF}. For the second part of this result, let $H$ be the inverse image of $Sp_{2n}(\mathbb{Z})$ under the universal covering map $\widetilde{Sp_{2n}(\mathbb{R})} \to Sp_{2n}(\mathbb{R})$. Then $H$ is not residually finite by a result of P.~Deligne \cite{deligne}.

Moreover, as $Sp_{2n}(\mathbb{R})$ is a connected semisimple algebraic group defined over $\mathbb{Q}$ and as its subgroup $G = Sp_{2n}(\mathbb{Z})$ is arithmetic, a result of M.~S.~Raghunathan \cite[Corollary 2]{raghu} says that $G$ is finitely presented. We may deduce from this that $H$ is finitely presented. Indeed, $H$ is an extension of its central subgroup $Z \cong \pi_1(Sp_{2n}(\mathbb{R}) \cong \mathbb{Z}$ by $G$. Thus, if $G = \langle \mathcal{X} \mid \mathcal{R} \rangle$ is a finite presentation for $G$, then it is easy to see that $H \cong \langle \mathcal{X} \cup \{z\} \mid \mathcal{R}_1 \cup \mathcal{R}_2 \rangle$, where $\mathcal{R}_1 = \{ rz^{-n(r)} \mid r \in \mathcal{R} \}$ for some integers $\{ n(r) \mid r \in \mathcal{R} \}$, and $\mathcal{R}_2 = \{ [x,z] \mid x \in \mathcal{X} \}$.

It is therefore left to show that groups $H$ in Theorem~\ref{t:EN=/>RF} are strongly equationally Noetherian. We do this below.

\begin{proof}[Proof of Theorem~\ref{t:EN=/>RF}]
Note first that $\ker(p)$ is central in $H$. Indeed, let $z \in \ker(p)$. If $h \in H$ then, as $H$ is a connected Lie group, there exists a continuous path $\gamma_h: [0,1] \to H$ from $1$ to $h$. Then the path $\beta_h: [0,1] \to H, t \mapsto [z,\gamma_h(t)] = z^{-1} \cdot z^{\gamma_h(t)}$ is also continuous, and since $\ker(p)$ is normal in $H$ we have $\beta_h([0,1]) \subseteq \ker(p)$. But since $p$ is a covering map, $\ker(p)$ is discrete in $H$, and so $\beta_h$ must be constant. Thus $[z,h] = \beta_h(1) = \beta_h(0) = [z,1] = 1$ for all $h \in H$ and so $z \in Z(H)$, as claimed.

We may embed $GL_m(\mathbb{R})$, and hence $G$, as a Zariski-closed subset of $\mathbb{R}^{m^2+1}$. Thus $G^n$ is a Zariski-closed subset of $\mathbb{R}^{n(m^2+1)}$. 

Now let $\widetilde{S} \leq F_n * H$ be a system of exponent sum zero equations over $H$: it is enough to consider such systems of equations by Lemma~\ref{l:+0}. Let $S = \Phi(\widetilde{S})$, where $\Phi: F_n * H \to F_n * G$ is the homomorphism defined by $\Phi(f) = f$ for $f \in F_n$ and $\Phi(g) = p(g)$ for $g \in H$. As $G$ is linear, and so strongly equationally Noetherian, it follows that $V := V_G(S) \subseteq G^n$ is equal to $V_G(S_0)$ for some finite subset $S_0 \subseteq S$. Now for each $s \in S_0$, choose an element $\widetilde{s} \in \Phi^{-1}(s) \cap \widetilde{S}$, and define a finite subset $\widetilde{S}_0 = \{ \widetilde{s} \mid s \in S_0 \} \subseteq \widetilde{S}$.

Note that for any $s \in F_n * G$ and any $A_1,\ldots,A_n \in G \leq GL_m(\mathbb{R})$, entries of the matrix $s(A_1,\ldots,A_n)$ can be expressed as polynomials (with real coefficients) of the entries of matrices $A_1,\ldots,A_n$. It follows that $V$ is a Zariski-closed subset of $G^n$, and so it is an affine algebraic variety over $\mathbb{R}$. Therefore, by \cite[Theorem 4.1]{dekn}, $V$ has finitely many path components: $V_1,\ldots,V_r$, say.

Now as $K = \ker(p)$ is central in $H$, it follows that given any exponent sum zero equation $s \in F_n * H$ and any $g_1,\ldots,g_n \in H$, we have $s(g_1,\ldots,g_n) = s(k_1g_1,\ldots,k_ng_n)$ for any $k_1,\ldots,k_n \in K$. Thus $s(g_1,\ldots,g_n)$ depends only on the element $p^{(n)}(g_1,\ldots,g_n) := (p(g_1),\ldots,p(g_n)) \in G^n$, and so the map $s: H^n \to H$ factors as $s = \widehat{s} \circ p^{(n)}$ for a map $\widehat{s}: G^n \to H$ and the map $p^{(n)}: H^n \to G^n$. Moreover, it follows from the construction that we have $V = \{ (g_1,\ldots,g_n) \in G^n \mid \widehat{s}(g_1,\ldots,g_n) \in K \text{ for all } s \in \widetilde{S} \}$, and so each $s \in \widetilde{S}$ defines a function $\widehat{s}|_V: V \to K$.

But as $s: H^n \to H$ is clearly continuous for every $s \in \widetilde{S}$ and as $p^{(n)}$ is a covering map, $\widehat{s}$ is also continuous and hence so is $\widehat{s}|_V$. As $K$ is discrete (by the definition of a covering map), it follows that $\widehat{s}|_V$ is constant on each of the path components $V_i$ of $V$, for $1 \leq i \leq r$. In particular, for each $s \in \widetilde{S}$ and each $i \in \{ 1,\ldots,r \}$, there exists $k_{s,i} \in K$ such that $\widehat{s}(g_1,\ldots,g_n) = k_{s,i}$ for all $(g_1,\ldots,g_n) \in V_i$.

It follows that we have
\[
V_{H}(\widetilde{S}) = \bigcup \left\{ (p^{(n)})^{-1}(V_i) \:\middle|\: k_{s,i} = 1 \text{ for all } s \in \widetilde{S} \right\} = \bigcup_{i \in \mathcal{I}} (p^{(n)})^{-1}(V_i)
\]
for some subset $\mathcal{I} \subseteq \{1,\ldots,r\}$. For each $i \in \{1,\ldots,r\} \setminus \mathcal{I}$, let $s_i \in \widetilde{S}$ be an element such that $k_{s_i,i} \neq 1$, and define a finite subset $\widetilde{S}_1 = \{ s_i \mid 1 \leq i \leq r, i \notin \mathcal{I} \}$ of $\widetilde{S}$. It then follows from the construction that $V_{H}(\widetilde{S}_0) \subseteq (p^{(n)})^{-1}(V)$ and so $V_{H}(\widetilde{S}_0 \cup \widetilde{S}_1) = \bigcup_{i \in \mathcal{I}} (p^{(n)})^{-1}(V_i) = V_{H}(\widetilde{S})$. Thus, by Lemma~\ref{l:+0}, $H$ is strongly equationally Noetherian, as required.
\end{proof}

\section{Graphs of groups} \label{s:comb}

In this section, we deal with a finite graph of groups. Here we recall its definition and record our notation.

\begin{defn} \label{d:gg}
A \emph{finite graph of groups} $\mathcal{G}$ consists of the following data:
\begin{enumerate}[label=(\alph*)]
\item \label{d:gg-v} A finite set $V(\mathcal{G})$, the \emph{vertex set} of $\mathcal{G}$. 
\item \label{d:gg-e} A finite set $E(\mathcal{G})$, the \emph{edge set} of $\mathcal{G}$, together with a fixed point free involution $E(\mathcal{G}) \to E(\mathcal{G}), e \mapsto \overline{e}$.
\item \label{d:gg-it} A function $i: E(\mathcal{G}) \to V(\mathcal{G})$, the \emph{initial vertex function}, such that for every non-empty proper subset $A \subset V(\mathcal{G})$ there exists $e \in E(\mathcal{G})$ with $i(e) \in A$ and $i(\overline{e}) \notin A$.
\item \label{d:gg-gv} A group $G_v$ for each vertex $v \in V(\mathcal{G})$: the \emph{vertex groups}.
\item \label{d:gg-ge} A group $G_e = G_{\overline{e}}$ for each pair of edges $\{ e,\overline{e} \} \subseteq E(\mathcal{G})$: the \emph{edge groups}.
\item \label{d:gg-git} An injective group homomorphism $\iota_e: G_e \to G_{i(e)}$ for each $e \in E(\mathcal{G})$: the \emph{inclusion maps}.
\end{enumerate}
\end{defn}

We may use parts \ref{d:gg-v}--\ref{d:gg-it} to define a finite connected undirected graph (called the \emph{underlying graph} of $\mathcal{G}$) with vertex set $V(\mathcal{G})$ and an edge for each pair $e,\overline{e} \in E(\mathcal{G})$, viewed as a pair of a directed edge $e$ (from $i(e)$ to $i(\overline{e})$) and its inverse. Let $T \subseteq E(\mathcal{G})$ be a maximal tree in this graph, so that $e \in T$ if and only if $\overline{e} \in T$ for each $e \in E(\mathcal{G})$. Then we may define the \emph{fundamental group} $\pi_1(\mathcal{G},T)$ of $\mathcal{G}$ with respect to $T$ as the group generated by $\left( \bigsqcup_{v \in V(\mathcal{G})} G_v \right) \sqcup E(\mathcal{G})$ with the following relations:
\begin{enumerate}[label=(R\alph*)]
\item \label{d:gg-rv} the multiplication tables of $G_v$ for each $v \in V(\mathcal{G})$: that is, $(g)(h) = (gh)$ for $g,h \in G_v$;
\item \label{d:gg-rbs} the \emph{Bass--Serre relations}: $\iota_{\overline{e}}(g) = \overline{e}\iota_e(g)e$ for each $e \in E(\mathcal{G})$ and $g \in G_e$;
\item \label{d:gg-re} $e\overline{e} = 1$ for each $e \in E(\mathcal{G})$; and
\item \label{d:gg-rt} $e = 1$ for each $e \in T$.
\end{enumerate}

It is a standard fact in Bass--Serre theory that $\pi_1(\mathcal{G},T)$ is independent (up to isomomorphism) from the choice of $T$, and so we will write $\pi_1(\mathcal{G})$ for $\pi_1(\mathcal{G},T)$. It is also clear from the relations of types \ref{d:gg-rv} and \ref{d:gg-rt} that each element of $\pi_1(\mathcal{G})$ can be written as $g_0 e_1 g_1 \cdots e_n g_n$ with $g_j \in G_{v_j}$ for some $v_j \in V(\mathcal{G})$ and $e_j \in E(\mathcal{G})$, such that $v_n = v_0$, $v_{j-1} = i(e_j)$ and $v_j = i(\overline{e_j})$ for $1 \leq j \leq n$. The following result then states that a trivial element in $\pi_1(\mathcal{G})$ is `obviously' trivial.

\begin{thm}[Normal form theorem; see {\cite[Chapter I, Theorem 4.1]{dd}}] \label{t:ftbst}
Let $\mathcal{G}$ be a finite graph of groups, and let $g = g_0 e_1 g_1 \cdots e_n g_n \in \pi_1(\mathcal{G})$ with $g_j \in G_{v_j}$ for some $v_j \in V(\mathcal{G})$ and $e_j \in E(\mathcal{G})$, such that $v_n=v_0$, $v_{j-1} = i(e_j)$ and $v_j = i(\overline{e_j})$ for $1 \leq j \leq n$. If $g = 1$ in $\pi_1(\mathcal{G})$, then either $n = 0$ and $g_0 = 1$ in $G_{v_0}$, or $n \geq 2$ and there exists $j \in \{1,\ldots,n-1\}$ such that $e_{j+1} = \overline{e_j}$ and $g_j \in \iota_{\overline{e_j}}(G_{e_j})$.
\end{thm}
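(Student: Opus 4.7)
The plan is to reduce the statement to the normal form theorems for amalgamated free products and HNN extensions, which form the two basic building blocks of a graph of groups. Fix a maximal tree $T \subseteq E(\mathcal{G})$, and decompose the construction of $\pi_1(\mathcal{G})$ in two stages. Using only relations \ref{d:gg-rv}, \ref{d:gg-rbs} for $e \in T$, \ref{d:gg-re} and \ref{d:gg-rt}, I would define an intermediate group $\pi_1(\mathcal{G},T)_0$ and, by induction on the number of geometric edges of $T$, identify it with the iterated amalgamated free product of the $G_v$ along $T$, the amalgamations being given by the pairs $(\iota_e,\iota_{\overline{e}})$ for $e \in T$. The full fundamental group $\pi_1(\mathcal{G})$ is then obtained from $\pi_1(\mathcal{G},T)_0$ by one successive HNN extension for each geometric edge in $E(\mathcal{G})\setminus T$, with stable letter the edge symbol $e$ and associated subgroups $\iota_e(G_e)$ and $\iota_{\overline{e}}(G_e)$, identified via the Bass--Serre relation \ref{d:gg-rbs}.

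Next, I would cite (or reprove via the standard coset-permutation action) the normal form theorem for amalgamated free products $A *_C B$: every reduced sequence $a_1b_1\cdots$ of length $\ge 1$ is non-trivial. Iterating along the edges of $T$ yields the statement of the theorem restricted to words all of whose edge letters lie in $T$: such a word equals $1$ only if some $g_j$ lies in the appropriate amalgamating subgroup $\iota_{\overline{e_j}}(G_{e_j})$ and $e_{j+1}=\overline{e_j}$ (or $n=0$ and $g_0 = 1$). Similarly, I would invoke Britton's Lemma for HNN extensions: a word $k_0 t^{\varepsilon_1}k_1\cdots t^{\varepsilon_n}k_n$ representing $1$ with $n \ge 1$ contains a pinch $t^{-1}ct$ or $tc't^{-1}$ with $c,c'$ in the associated subgroups.

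The final step is to combine these two inputs. Given a word $g = g_0e_1g_1\cdots e_ng_n$ equal to $1$ in $\pi_1(\mathcal{G})$, I would apply Britton's Lemma repeatedly to the outermost HNN extensions, one for each non-tree edge: each application either produces a pinch, which by \ref{d:gg-rbs} is exactly a consecutive pair $e_j,e_{j+1} = \overline{e_j}$ with $g_j \in \iota_{\overline{e_j}}(G_{e_j})$, or shows that no non-tree edge letter actually appears, reducing to the amalgamated case from Step 2.

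The main obstacle is the bookkeeping in this last step: the pinches produced by Britton's Lemma are phrased in terms of the stable letters and \emph{their} associated subgroups, whereas the conclusion of the theorem is phrased symmetrically in $\iota_e$ and $\iota_{\overline{e}}$; one has to check that the translation via \ref{d:gg-rbs} yields precisely the pair $(e_j, e_{j+1}=\overline{e_j})$ with $g_j \in \iota_{\overline{e_j}}(G_{e_j})$ in either orientation of the stable letter. A more conceptual alternative, which I would actually favour, is to bypass this bookkeeping by constructing the Bass--Serre tree $\mathcal{T_G}$ directly, checking that $\pi_1(\mathcal{G})$ acts on it, and observing that a reduced word of length $n$ translates a base vertex along a geodesic path of combinatorial length $n$ in $\mathcal{T_G}$; since the identity fixes everything, this forces $n=0$ and $g_0 = 1$.
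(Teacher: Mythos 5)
The paper does not prove this theorem: it is stated with a pointer to Dicks and Dunwoody \cite[Chapter I, Theorem 4.1]{dd} and used as a black box, so there is no in-paper argument to compare against. Judged on its own, your first strategy --- fix a maximal tree $T$, realise $\pi_1(\mathcal{G},T)$ as an iterated amalgam of the $G_v$ along $T$ followed by one HNN extension per non-tree edge, then invoke the amalgam normal form theorem and Britton's Lemma --- is the classical proof (it is essentially the one in Serre's \emph{Trees} and in Lyndon--Schupp). One point your sketch elides: when Britton's Lemma produces a pinch, the element sandwiched between the two stable letters is \emph{a priori} a whole subword $g_j e_{j+1} g_{j+1}\cdots e_k g_k$ of $g$ involving only tree edges, not a single $g_j$. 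To conclude you need the stronger form of the tree-amalgam normal form, namely that a reduced word containing at least one tree edge lies in no single vertex group; that is what forces the subword between the pinching pair of stable letters to collapse to a single $g_j$ and yields the reducible pair you want. This is standard, but it is a genuine extra lemma, not just orientation bookkeeping.

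Your preferred alternative, reading the conclusion off translation lengths in the Bass--Serre tree, is more delicate than your last sentence acknowledges. In the usual presentation the vertices of $\mathcal{T_G}$ are the cosets $\bigsqcup_{v} \pi_1(\mathcal{G})/G_v$, and the two facts you need --- that this graph has no cycles, and that each $G_v$ embeds in $\pi_1(\mathcal{G})$ (the latter is already required for the $n=0$ case, to pass from ``$g$ stabilises the base vertex'' to ``$g_0 = 1$ in $G_{v_0}$'') --- are themselves usually \emph{derived from} the normal form theorem, so citing $\mathcal{T_G}$ here is circular. To make this route honest you need an independent construction of the tree: either a purely combinatorial one built from reduced sequences, in which case you end up reproving the normal form theorem along the way, or the topological one via graphs of spaces and universal covers (Scott--Wall), which does break the circularity but imports covering-space machinery. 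As written, ``construct $\mathcal{T_G}$ directly and check that $\pi_1(\mathcal{G})$ acts on it'' glosses over precisely the point at issue.
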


It follows, in particular, that $G_v$ is canonically a subgroup of $\pi_1(\mathcal{G})$ for each $v \in V(\mathcal{G})$. One of the main facts in Bass--Serre theory is that $\pi_1(\mathcal{G})$ acts simplicially, minimally and cocompactly (with quotient the underlying graph of $\mathcal{G}$) on a simplicial tree $\mathcal{T_G}$, the \emph{Bass--Serre tree}, such that vertex stabilisers and edge stabilisers under this action are precisely the conjugates of $G_v$, $v \in V(\mathcal{G})$ and $\iota_e(G_e)$, $e \in E(\mathcal{G})$, respectively. We refer the reader to \cite[Chapter I]{dd} for a comprehensive account of Bass--Serre theory.

In our proof of Theorem~\ref{t:comb} we will use the following strengthening of condition \ref{i:tcomb-fin1v}.

\begin{lem} \label{l:fin1v2v}
The condition \ref{i:tcomb-fin1v} in Theorem~\ref{t:comb} is equivalent to the following statement (for any finite graph of groups $\mathcal{G}$ and any homomorphisms $\overline\phi_e: G_{i(e)} \to G_{i(\overline{e})}$ extending $\phi_e = \iota_{\overline{e}} \circ \iota_e^{-1}$):
\begin{enumerate}[label=\textup{(\roman*')},start=2]
\item \label{i:tcomb-fin2v} for any pair of vertices $v,w \in V(\mathcal{G})$, there are only finitely many homomorphisms of the form $\overline\phi_{e_n} \circ \cdots \circ \overline\phi_{e_1}: G_v \to G_w$, where $e_1 \cdots e_n$ ranges over all paths in $\mathcal{G}$ from $v$ to $w$.
\end{enumerate}
\end{lem}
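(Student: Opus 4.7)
For brevity, write $\Phi(e_1 \cdots e_n) := \overline\phi_{e_n} \circ \cdots \circ \overline\phi_{e_1}$ for the homomorphism induced by a path $e_1 \cdots e_n$. The implication (ii')~$\Rightarrow$~(ii) is immediate by specialising to $w = v$, so fix $v, w \in V(\mathcal{G})$ and assume (ii); the plan is to write every path $q$ from $v$ to $w$ in a standard form that makes the finiteness of $\{\Phi(q)\}$ apparent.

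The key assertion is a decomposition claim: every path $q$ from $v$ to $w$ can be expressed as
\[
q = c_0 \, e_{[1]} \, c_1 \, e_{[2]} \, c_2 \, \cdots \, e_{[k]} \, c_k,
\]
where $e_{[1]} \cdots e_{[k]}$ is a simple path in $\mathcal{G}$ from $v$ to $w$ with intermediate vertices $v = v_{[0]}, v_{[1]}, \ldots, v_{[k]} = w$, and each $c_a$ is a (possibly empty) closed path at $v_{[a]}$. I will prove this by induction on the length of $q$: if $q$ already visits no vertex twice, there is nothing to do; otherwise there exist indices $i < j$ with $v_i = v_j$ on the vertex sequence of $q = e_1 \cdots e_n$, and the subpath $c := e_{i+1} \cdots e_j$ is a closed path at $v_i$. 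Removing $c$ yields the strictly shorter path $q' = e_1 \cdots e_i e_{j+1} \cdots e_n$, which by induction decomposes as required, and reinserting $c$ at the appropriate position in this decomposition (splitting one of the $c'_a$ if necessary) gives the desired decomposition of $q$.

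Once the decomposition is available, we read off
\[
\Phi(q) = \Phi(c_k) \circ \overline\phi_{e_{[k]}} \circ \Phi(c_{k-1}) \circ \cdots \circ \overline\phi_{e_{[1]}} \circ \Phi(c_0),
\]
with each $\Phi(c_a)$ lying in the finite set of closed-path endomorphisms at $v_{[a]}$ guaranteed by (ii). The finite graph $\mathcal{G}$ admits only finitely many simple paths from $v$ to $w$, so the union of these finitely many products of finite sets is itself finite; hence only finitely many maps $\Phi(q) \in \Hom(G_v, G_w)$ arise, establishing (ii').

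The main technical subtlety is the inductive reinsertion step: the vertex $v_i$ at which the removed loop $c$ is based need not appear on the simple path produced by the inductive decomposition of $q'$. This is resolved by tracking the \emph{position} in $q'$ at which $c$ was excised rather than the vertex itself -- this position lies either at a boundary between some $c'_a$ and an edge $e_{[a]}$, in which case $c$ is attached to $c'_a$, or strictly inside some $c'_a$, in which case $c'_a$ is split at the relevant point and $c$ is inserted between the two halves, producing a new closed path at $v_{[a]}$.
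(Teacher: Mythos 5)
Your argument is correct and arrives at the same conclusion via a slightly different decomposition. The paper writes a path $P$ from $v$ to $w$ as $P = P_0 Q_1 P_1 \cdots Q_r P_r$ (with each $Q_i$ a closed path) and \emph{minimises} the quantity $r + 2\sum_i |P_i|$; minimality then automatically forces the base vertices of the $Q_i$ to be pairwise distinct (hence $r \leq |V(\mathcal{G})|$) and each $P_i$ to visit no vertex twice, with no need to control how the $P_i$ fit together. Your version asserts the stronger structural fact that the skeleton can be taken to be a \emph{single} simple path $e_{[1]} \cdots e_{[k]}$ from $v$ to $w$ with closed paths attached at its vertices, and you prove this by induction on length via strip-and-reinsert. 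The extremal trick is precisely what lets the paper sidestep the reinsertion subtlety you correctly flag and then resolve (the excised loop re-enters either at a boundary of some $c'_a$, where the vertex is $v_{[a]}$, or strictly inside some $c'_a$, which is then split into two halves). Both routes yield the required finiteness, since there are only finitely many simple paths and only finitely many closed-path endomorphisms at each vertex.
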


\begin{proof}
The implication \ref{i:tcomb-fin2v} $\Rightarrow$ \ref{i:tcomb-fin1v} is trivial. We thus only need to prove \ref{i:tcomb-fin1v} $\Rightarrow$ \ref{i:tcomb-fin2v}.

Suppose that \ref{i:tcomb-fin1v} holds, and let $v,w \in V(\mathcal{G})$. Let $m = |V(\mathcal{G})|$. Since $m < \infty$, there exist $D < \infty$ endomorphisms of the form $\overline\phi_{e_n} \circ \cdots \circ \overline\phi_{e_1}: G_u \to G_u$, where $e_1 \cdots e_n$ ranges over all closed paths in $\mathcal{G}$ based at $u$ (for some $u \in V(\mathcal{G})$); we will call an endomorphism of this form a \emph{closed path endomorphism}. Since $\mathcal{G}$ is finite, there also exist finitely many paths in $\mathcal{G}$ which do not visit the same vertex more than once, and so there are $k < \infty$ homomorphisms of the form $\overline\phi_{e_n} \circ \cdots \circ \overline\phi_{e_1}$ for such a path $e_1 \cdots e_n$; we will call an homomorphism of this form a \emph{simple path homomorphism}.

Now let $P$ be a path in $\mathcal{G}$ from $v$ to $w$. Express $P$ as $P = P_0 Q_1 P_1 \cdots Q_r P_r$ in such a way that each $Q_i$ is a closed path (based at some $u_i \in V(\mathcal{G})$), and so that $r+2\sum_{i=0}^r |P_i|$ is as small as possible; here $|Q|$ denotes the length of a path $Q$, i.e.\ its number of edges. Then $u_i \neq u_j$ for $i \neq j$: indeed, if we had $u_i = u_j$ for some $i < j$ then we could write $P = P_0 Q_1 \cdots P_{i-1} Q' P_j \cdots Q_r P_r$ for a closed path $Q'$, contradicting the minimality of $r+2\sum_{i=0}^r |P_i|$. It follows that $r \leq m$. Similarly, we can see that each $P_i$ does not visit any vertex more than once: otherwise we would have $P_i = P_i' Q_i' P_i''$ for a closed path $Q_i'$ with $|Q_i'| \geq 1$, and so we could write $P = P_0 Q_1 \cdots Q_i P_i' Q_i' P_i'' Q_{i+1} \cdots Q_r P_r$, again contradicting the minimality of $r+2\sum_{i=0}^r |P_i|$.

It follows that for any path $e_1 \cdots e_n$ in $\mathcal{G}$ from $v$ to $w$, the homomorphism $\overline\phi_{e_n} \circ \cdots \circ \overline\phi_{e_1}: G_v \to G_w$ is of the form $\psi_r \circ \xi_r \circ \cdots \circ \psi_1 \circ \xi_1 \circ \psi_0$, where $r \leq m$, each $\psi_i$ is a simple path homomorphism, and each $\xi_i$ is a closed path endomorphism. As there are $D < \infty$ choices for each $\xi_i$ and $k < \infty$ choices for each $\psi_i$, it follows that there are at most $k^{m+1}D^m < \infty$ choices for the homomorphism $\overline\phi_{e_n} \circ \cdots \circ \overline\phi_{e_1}$. This proves \ref{i:tcomb-fin2v}.
\end{proof}

The main aim of this section is to prove Theorem~\ref{t:comb}. We start by proving the implication ($\Leftarrow$) of part \ref{i:tcomb-RF} in Section~\ref{ss:combRF}. In Section~\ref{ss:qalg}, we introduce quasi-algebraic subsets of groups and discuss ways to construct them. In Section~\ref{ss:acyl} we discuss acylindrical hyperbolicity of the action $\pi_1(\mathcal{G}) \curvearrowright \mathcal{T_G}$, and in Section~\ref{ss:combEN} we prove the implication ($\Leftarrow$) of part \ref{i:tcomb-EN}. In Section~\ref{ss:comb-rem} we prove the implications ($\Rightarrow$) in both parts and make a few remarks.

\subsection{Residually finite vertex groups} \label{ss:combRF}

We first prove the implication ($\Leftarrow$) in Theorem~\ref{t:comb}\ref{i:tcomb-RF}.

\begin{prop} \label{p:comb-RF-<=}
Let $\mathcal{G}$ be a finite graph of groups. Suppose that
\begin{enumerate}[label={\textup{(\roman*)}}]
\item \label{i:pcomb-rf-ext} for each $e \in E(\mathcal{G})$, the isomorphism $\phi_e = \iota_{\overline{e}} \circ \iota_e^{-1}: \iota_e(G_e) \to \iota_{\overline{e}}(G_e)$ extends to a homomorphism $\overline\phi_e: G_{i(e)} \to G_{i(\overline{e})}$;
\item \label{i:pcomb-rf-fin1v} for each $v \in V(\mathcal{G})$, there are only finitely many endomorphisms of the form $\overline\phi_{e_n} \circ \cdots \circ \overline\phi_{e_1}: G_v \to G_v$, where $e_1 \cdots e_n$ ranges over all closed paths in $\mathcal{G}$ starting and ending at $v$;
\item \label{i:pcomb-rf-en} for each $v \in V(\mathcal{G})$, the group $G_v$ is residually finite; and
\item \label{i:pcomb-rf-qa} for each $e \in E(\mathcal{G})$, the subgroup $\iota_e(G_e) \leq G_{i(e)}$ is separable.
\end{enumerate}
Then $\pi_1(\mathcal{G})$ is residually finite.
\end{prop}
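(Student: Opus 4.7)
The plan is to fix a nontrivial element $g \in \pi_1(\mathcal{G})$, put it in reduced normal form $g = g_0 e_1 g_1 \cdots e_n g_n$ (with $v_j = i(\overline{e_j})$), and build a finite quotient of $\pi_1(\mathcal{G})$ to which $g$ descends nontrivially. The argument has two stages. In the first, using only hypotheses (iii) and (iv), I pick finite-index normal subgroups $N_v^{(0)} \lhd G_v$ small enough to detect $g$ ``vertex-by-vertex'': if $n = 0$, use residual finiteness of $G_{v_0}$ to arrange $g_0 \notin N_{v_0}^{(0)}$; and for every $j \in \{1,\ldots,n-1\}$ with $e_{j+1} = \overline{e_j}$, use separability of $\iota_{\overline{e_j}}(G_{e_j})$ in $G_{v_j}$ to arrange $g_j \notin \iota_{\overline{e_j}}(G_{e_j}) N_{v_j}^{(0)}$.

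The crux of the argument is the second stage, which upgrades the $N_v^{(0)}$ to subgroups that are compatible with the graph-of-groups structure, using hypotheses (i) and (ii). Define
\[
\tilde N_v := \bigcap_{p} \overline\phi_p^{-1}\!\bigl( N_{\mathrm{end}(p)}^{(0)} \bigr),
\]
where $p$ ranges over all directed paths in $\mathcal{G}$ starting at $v$, and $\overline\phi_p = \overline\phi_{e_k} \circ \cdots \circ \overline\phi_{e_1}$ for $p = e_1 \cdots e_k$. By Lemma~\ref{l:fin1v2v}, only finitely many distinct homomorphisms $\overline\phi_p$ arise for each fixed endpoint, so $\tilde N_v$ is a finite intersection of finite-index normal subgroups of $G_v$, hence itself finite-index and normal. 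Considering the trivial path yields $\tilde N_v \subseteq N_v^{(0)}$, while concatenation of paths gives $\overline\phi_e(\tilde N_{i(e)}) \subseteq \tilde N_{i(\overline e)}$ for every $e \in E(\mathcal{G})$. Combining this with the symmetric inclusion for $\overline\phi_{\overline e}$ and the fact that each $\overline\phi_e$ restricts to $\iota_{\overline e} \circ \iota_e^{-1}$ on $\iota_e(G_e)$ gives $\iota_e^{-1}(\tilde N_{i(e)}) = \iota_{\overline e}^{-1}(\tilde N_{i(\overline e)})$. Consequently, the $\{\tilde N_v\}$ define a genuine quotient graph of groups $\mathcal{G}/\tilde N$ with finite vertex groups $G_v/\tilde N_v$ and injective edge inclusions, yielding a natural homomorphism $\pi \colon \pi_1(\mathcal{G}) \to \pi_1(\mathcal{G}/\tilde N)$.

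By the inclusion $\tilde N_v \subseteq N_v^{(0)}$ and the choices in the first stage, the expression $\pi(g) = \bar g_0 e_1 \bar g_1 \cdots e_n \bar g_n$ (with $\bar g_j$ the image of $g_j$ in $G_{v_j}/\tilde N_{v_j}$) remains in reduced form in $\pi_1(\mathcal{G}/\tilde N)$, so by the Normal Form Theorem (Theorem~\ref{t:ftbst}) we have $\pi(g) \neq 1$. Since $\mathcal{G}/\tilde N$ is a finite graph of finite groups, $\pi_1(\mathcal{G}/\tilde N)$ is virtually free by the classical theorem of Karrass--Pietrowski--Solitar, and in particular residually finite; hence some finite quotient of $\pi_1(\mathcal{G}/\tilde N)$ separates $\pi(g)$ from $1$, and composing with $\pi$ produces the required finite quotient of $\pi_1(\mathcal{G})$. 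The main obstacle is the second stage: to form a quotient graph of groups one needs compatibility of preimages under the $\iota_e$'s, whereas the extensions $\overline\phi_e$ naturally give closure under the $\overline\phi_e$'s; transferring one property to the other uses the restriction property of the extensions, and the finiteness in condition (ii) (via Lemma~\ref{l:fin1v2v}) is exactly what is needed to keep the defining intersection for $\tilde N_v$ finite.
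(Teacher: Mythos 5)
Your proof is correct and follows essentially the same route as the paper's: build a compatible family of finite-index normal subgroups by pulling back chosen base subgroups along all paths in $\mathcal{G}$ (using condition (ii) via Lemma~\ref{l:fin1v2v} to keep the intersections finite), pass to the resulting quotient graph of groups with finite vertex groups, and conclude by residual finiteness of virtually free groups together with the Normal Form Theorem. The only cosmetic difference is that you treat the full normal form of $g$ at once, whereas the paper first abstracts a notion of ``compatible collection'' and reduces, via Theorem~\ref{t:ftbst}, to the single claim that for a vertex $w$, a separable subgroup $A < G_w$ and $g \in G_w \setminus A$ one can find a compatible collection with $g \notin N_w A$.
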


\begin{proof}
We say a collection of normal subgroups $\{ N_v \unlhd G_v \}_{v \in V(\mathcal{G})}$ is \emph{compatible} if $|G_v/N_v| < \infty$ for all $v \in V(\mathcal{G})$ and if $\iota_e^{-1}(N_{i(e)}) = \iota_{\overline{e}}^{-1}(N_{i(\overline{e})})$ for all $e \in E(\mathcal{G})$. Given a compatible collection of subgroups $N_v \unlhd G_v$, we may construct a graph of groups $\widehat{\mathcal{G}}$ with the same underlying graph as $\mathcal{G}$, with vertex groups $\widehat{G}_v = G_v/N_v$, edge groups $\widehat{G}_e = G_e/\iota_e^{-1}(N_{i(e)})$ and the inclusion maps $\widehat\iota_e$ induced by $\iota_e$. It is easy to check that in this case the quotient maps $G_v \to \widehat{G}_v$ extend to a surjective homomorpism $p_{\{N_v\}}: \pi_1(\mathcal{G}) \to \pi_1(\widehat{\mathcal{G}})$. Moreover, as $\widehat{\mathcal{G}}$ is a finite graph of groups with finite vertex groups, it follows that $\pi_1(\widehat{\mathcal{G}})$ is virtually free and hence residually finite. Thus, in order to show that $\pi_1(\mathcal{G})$ is residually finite, it is enough to show that for each non-trivial element $g \in \pi_1(\mathcal{G})$, there exists a compatible collection $\{ N_v \unlhd G_v \}$ such that $p_{\{N_v\}}(g) \neq 1$.

By Theorem~\ref{t:ftbst}, it is therefore enough:
\begin{enumerate}[label=(\alph*)]
\item \label{i:pcrf-1} for each $w \in V(\mathcal{G})$ and each $g \in G_w \setminus \{1\}$, to find a compatible collection $\{ N_v \unlhd G_v \}$ such that $g \notin N_w$; and
\item \label{i:pcrf-2} for each $e \in E(\mathcal{G})$ and each $g \in G_{i(e)} \setminus \iota_e(G_e)$, to find a compatible collection $\{ N_v \unlhd G_v \}$ such that $g \notin N_{i(e)}\iota_e(G_e)$.
\end{enumerate}
Now fix a vertex $w \in V(\mathcal{G})$, a separable subgroup $A < G_w$ and an element $g \in G_w \setminus A$. Then, by assumptions \ref{i:pcomb-rf-en} and \ref{i:pcomb-rf-qa} of the Proposition, both \ref{i:pcrf-1} and \ref{i:pcrf-2} will follow if we can find a compatible collection $\{ N_v \unlhd G_v \}$ such that $g \notin N_wA$.

To find such a collection, let $N \lhd G_w$ be a finite-index normal subgroup such that $g \notin NA$: such a subgroup exists as $A$ is separable. For each $v \in V(\mathcal{G})$, let
\[
N_v = \bigcap \left\{ (\overline\phi_{e_n} \circ \cdots \circ \overline\phi_{e_1})^{-1}(N) \:\middle|\: e_1 \cdots e_n \text{ is a path in $\mathcal{G}$ from $w$ to $v$} \right\},
\]
where the homomorphisms $\overline\phi_{e_j}: G_{i(e_j)} \to G_{i(\overline{e_j})}$ are as in assumption \ref{i:pcomb-rf-ext}. Note that there is an empty path from $w$ to $w$, and so $N_w \subseteq N$, implying that $g \notin N_wA$. Moreover, it follows from assumption \ref{i:pcomb-rf-fin1v} and Lemma~\ref{l:fin1v2v} that $N_v$ is an intersection of \emph{finitely many} finite-index normal subgroups of $G_v$, and so is itself a finite-index normal subgroup. Finally, given an edge $e \in E(\mathcal{G})$, if $e_1 \cdots e_n$ is a path from $i(\overline{e})$ to $v$ then $ee_1 \cdots e_n$ is a path from $i(e)$ to $v$, which implies that $N_{i(e)} \subseteq (\overline\phi_e)^{-1}(N_{i(\overline{e})})$ and so $\iota_e^{-1}(N_{i(e)}) \subseteq \iota_{\overline{e}}^{-1}(N_{i(\overline{e})})$; similarly, $\iota_e^{-1}(N_{i(e)}) \supseteq \iota_{\overline{e}}^{-1}(N_{i(\overline{e})})$. Hence the collection $\{N_v \unlhd G_v\}_{v \in V(\mathcal{G})}$ is compatible, thus completing the proof.
\end{proof}

\subsection{Quasi-algebraic subsets} \label{ss:qalg}

We now discuss the collection of quasi-algebraic subsets in a group. Recall that given a group $G$, we say a subset $A \subseteq G$ is \emph{quasi-algebraic} in $G$ if given any $n \geq 1$ and any $S \subseteq F_n$ we have $V_{G,A}(S) = V_{G,A}(S_0)$ for some finite subset $S_0 \subseteq S$, where we define
\[
V_{G,A}(T) = \{ (g_1,\ldots,g_n) \in G^n \mid t(g_1,\ldots,g_n) \in A \text{ for all } t \in T \}
\]
for any $T \subseteq F_n$. Thus $\{1\}$ is quasi-algebraic in $G$ if and only if $G$ is equationally Noetherian, and more generally a normal subgroup $N \unlhd G$ is quasi-algebraic in $G$ if and only if $G/N$ is equationally Noetherian. We thus view quasi-algebraic subsets in the context of equationally Noetherian groups as an analogue of separable subsets in the context of residually finite groups.

\begin{rmk}
In spite of this analogy, the collection of quasi-algebraic subsets is in general not the collection of closed sets in a topology, as this collection need not be closed under arbitrary intersections. For instance, let $\mathcal{G}$ be a family of equationally Noetherian groups that is not an equationally Noetherian family (such families are plentiful by Corollary~\ref{c:ENfam}, for example), and let $G = \bigoplus_{H \in \mathcal{G}} H$. Then the group $G$ is not equationally Noetherian (see, for instance, \cite[Lemma 3.9]{gh}), and so $\{1\}$ is not quasi-algebraic in $G$. However, we have $\{1\} = \bigcap_{H \in \mathcal{G}} \ker(p_H)$ and $\ker(p_H)$ is quasi-algebraic for each $H \in \mathcal{G}$, where $p_H: G \to H$ is the canonical projection for any $H \in \mathcal{G}$.
\end{rmk}

The following result says, however, that many separable subsets of an equationally Noetherian residually finite group are also quasi-algebraic. Note that an analogous result holds in the context of separable subsets, and so this will not help us to construct `new' quasi-algebraic subsets that are not separable.

\begin{prop}
The following subsets of a group $G$ are quasi-algebraic:
\begin{enumerate}[label={\textup{(\roman*)}}]
\item \label{i:qa-fi} cosets of finite index subgroups of $G$;
\item \label{i:qa-ui} finite unions and finite intersections of quasi-algebraic subsets of $G$;
\item \label{i:qa-hom} $\varphi^{-1}(A)$ for a group homomorphism $\varphi: G \to H$ and a quasi-algebraic subset $A \subseteq H$;
\item \label{i:qa-eq} $\{ g \mid s(g,g_2,\ldots,g_n) \in A \}$ for an equation $s \in F_n$ and a quasi-algebraic subset $A \subseteq G$;
\item \label{i:qa-ret} if $G$ is equationally Noetherian, retracts of $G$.
\end{enumerate}
In particular, quasi-algebraic subsets include inverses and translates of quasi-algebraic subsets.
\end{prop}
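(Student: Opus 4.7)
The plan is to dispatch parts (i), (iii), (iv), and (v) by direct reductions to the hypotheses, and then to prove (ii) in two halves, with finite intersections being routine but finite unions being the main obstacle. The issue with unions is that, as the preceding remark already highlights, quasi-algebraicity is \emph{not} the condition of being closed in a topology on $G^n$: it is an intersection-type stabilisation condition, and unions are not obviously compatible with it.

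For (i), I would pass to the normal core $N = \bigcap_{x \in G} xHx^{-1}$ of the finite-index subgroup $H$, so that $N \unlhd G$ is finite-index and $N \leq H$. Since $s(g_1,\ldots,g_n) \in gH$ depends only on the image $(g_1 N,\ldots,g_n N) \in (G/N)^n$, a finite system $S_0 \subseteq S$ can be obtained by picking one witnessing equation for each of the finitely many ``bad'' tuples in $(G/N)^n$. For (iii), the identity $V_{G,\varphi^{-1}(A)}(T) = (\varphi^n)^{-1}(V_{H,A}(T))$ shows that any finite witness for $A$ in $H$ serves for $\varphi^{-1}(A)$ in $G$. For (iv), treat $g_2,\ldots,g_n$ as additional free variables $Y_2,\ldots,Y_n$ and replace each $t \in T \subseteq F_m$ by $\tilde t(X_1,\ldots,X_m,Y_2,\ldots,Y_n) = s(t(X_1,\ldots,X_m),Y_2,\ldots,Y_n) \in F_{m+n-1}$; quasi-algebraicity of $A$ applied to $\tilde T$, evaluated at $(h_1,\ldots,h_m,g_2,\ldots,g_n)$, yields the required finite subsystem. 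For (v), use that $h \in H$ iff $\pi(h) = h$ for the retraction $\pi\colon G \to H$; setting $\tilde s(X_1,\ldots,X_n,Y_1,\ldots,Y_n) = s(Y_1,\ldots,Y_n)\,s(X_1,\ldots,X_n)^{-1} \in F_{2n}$, one has $V_{G,H}(S) = \{\mathbf g \in G^n : (\mathbf g, \pi(\mathbf g)) \in V_G(\tilde S)\}$, and equational Noetherianity of $G$ stabilises $V_G(\tilde S)$ at a finite subset.

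For (ii), intersections are immediate from $V_{G,A \cap B}(T) = V_{G,A}(T) \cap V_{G,B}(T)$: if $S_A, S_B \subseteq S$ are finite witnesses for $A$ and $B$, then $S_0 = S_A \cup S_B$ witnesses $A \cap B$. For unions I would argue by contradiction using a non-principal ultrafilter $\omega$ on $\mathbb N$. Assuming $A \cup B$ is not quasi-algebraic, enumerate a bad system $S = \{s_1, s_2, \ldots\} \subseteq F_n$ and choose $\mathbf g_i \in V_{G,A \cup B}(\{s_1,\ldots,s_i\}) \setminus V_{G,A \cup B}(S)$. For each $s_j$, eventually $s_j(\mathbf g_i) \in A \cup B$, and since $\omega$ is an ultrafilter, either $s_j(\mathbf g_i) \in A$ or $s_j(\mathbf g_i) \in B$ holds $\omega$-almost surely; colour $s_j$ accordingly, partitioning $S = T_A \sqcup T_B$. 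Applying quasi-algebraicity of $A$ and $B$ separately, pick finite $T_A^0 \subseteq T_A$ and $T_B^0 \subseteq T_B$ with $V_{G,A}(T_A^0) = V_{G,A}(T_A)$ and $V_{G,B}(T_B^0) = V_{G,B}(T_B)$. Since $T_A^0 \cup T_B^0$ is finite, $\omega$-almost surely $\mathbf g_i$ lies in $V_{G,A}(T_A^0) \cap V_{G,B}(T_B^0) = V_{G,A}(T_A) \cap V_{G,B}(T_B) \subseteq V_{G,A \cup B}(S)$, contradicting the choice of $\mathbf g_i$.

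Finally, the ``in particular'' clause is a quick consequence of (iv): for inverses $A^{-1} = \{g : g^{-1} \in A\}$ take $s(X_1) = X_1^{-1} \in F_1$; for left translates $hA = \{g : h^{-1}g \in A\}$ take $s(X_1,X_2) = X_2^{-1} X_1 \in F_2$ specialised at $g_2 = h$, and right translates are analogous.
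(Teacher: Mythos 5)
Your handling of (i), (iii), (iv), (v), the intersection case of (ii), and the closing ``in particular'' clause matches the paper's proof essentially line for line (normal core for (i), pulling $V_{H,A}$ back along $\varphi$ for (iii), the substitution tricks for (iv) and (v), $S_A\cup S_B$ for intersections, and $X_1X_2$, $X_2X_1$, $X_1^{-1}$ for translates and inverses).

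For finite unions in (ii) you take a genuinely different route, and justifiably so. The paper's proof of this case starts from the identity $V_{G,\bigcup_j A_j}(S)=\bigcup_j V_{G,A_j}(S)$, but only the inclusion $\supseteq$ holds in general: for $\mathbf g\in V_{G,\bigcup_j A_j}(S)$ the value $s(\mathbf g)$ may land in a different $A_j$ for each $s\in S$. Concretely, with $G=\mathbb{Z}$, $A_1=2\mathbb{Z}$, $A_2=1+2\mathbb{Z}$, $S=\{X_1,X_1^2\}\subseteq F_1$, the element $1$ lies in $V_{G,A_1\cup A_2}(S)=\mathbb{Z}$ but not in $V_{G,A_1}(S)\cup V_{G,A_2}(S)=2\mathbb{Z}$. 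Your ultrafilter argument sidesteps this correctly: the two-valuedness and finite additivity of $\omega$ is exactly what lets you colour each $s_j$ consistently into $T_A$ or $T_B$ and then invoke quasi-algebraicity of $A$ and $B$ separately on $T_A$ and $T_B$, and the concluding contradiction is sound. This is also the argument one would extract from Theorem~\ref{t:qae}: one checks $K^{(\varphi_j)}_{A\cup B}=K^{(\varphi_j)}_A\cup K^{(\varphi_j)}_B$ for the same ultrafilter reason, and then $\omega$-almost surely $K^{(\varphi_j)}_{A\cup B}\subseteq\varphi_j^{-1}(A)\cup\varphi_j^{-1}(B)=\varphi_j^{-1}(A\cup B)$. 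So your proposal is correct throughout, and in the union case it supplies a complete argument where the paper's short chain of equalities does not.
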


\begin{proof} We prove the parts in order.
\begin{enumerate}[label=(\roman*)]
\item If $H$ is a finite-index subgroup of $G$, then $N = \bigcap_{g \in G} H^g$ is a normal finite-index subgroup of $G$ contained in $H$. Since any subset of the finite group $G/N$ is clearly quasi-algebraic, it follows that any union of cosets of $N$ in $G$ is quasi-algebraic as well. In particular, any left or right coset of $H$ in $G$ is quasi-algebraic, as required.
\item Let $A_1,\ldots,A_m \subseteq G$ be a collection of quasi-algebraic subsets, and let $S \subseteq F_n$. Then, as each $A_j$ is quasi-algebraic, there exist finite subsets $S_j \subseteq S$ such that $V_{G,A_j}(S_j) = V_{G,A_j}(S)$ for each $j$. But now if $A = \bigcup_{j=1}^m A_j$, then we have $V_{G,A}(S) = \bigcup_{j=1}^m V_{G,A_j}(S) = \bigcup_{j=1}^m V_{G,A_j}(S_j) = \bigcup_{j=1}^m V_{G,A_j}(S_1 \cup \cdots \cup S_m) = V_{G,A}(S_1 \cup \cdots \cup S_m)$, and similarly $V_{G,A'}(S) = V_{G,A'}(S_1 \cup \cdots \cup S_m)$ for $A' = \bigcap_{j=1}^m A_j$. Thus $A$ and $A'$ are quasi-algebraic, as required.
\item Given any $T \subseteq F_n$, we have
\begin{align*}
V_{G,\varphi^{-1}(A)}(T) &= \{ (g_1,\ldots,g_n) \in G^n \mid \varphi(t(g_1,\ldots,g_n)) \in A \text{ for all } t \in T \} \\ &= \{ (g_1,\ldots,g_n) \in G^n \mid t(\varphi(g_1),\ldots,\varphi(g_n)) \in A \text{ for all } t \in T \} \\ &= \bigcup \{ \varphi^{-1}(h_1) \times \cdots \times \varphi^{-1}(h_n) \mid (h_1,\ldots,h_n) \in V_{H,A}(T) \}.
\end{align*}
Thus, since for any $S \subseteq F_n$ we have $V_{H,A}(S_0) = V_{H,A}(S)$ for some finite subset $S_0 \subseteq S$ (as $A \subseteq H$ is quasi-algebraic), it follows that we also have $V_{G,\varphi^{-1}(A)}(S_0) = V_{G,\varphi^{-1}(A)}(S)$, and so $\varphi^{-1}(A)$ is quasi-algebraic in $G$, as required.
\item Let $B = \{ g \mid s(g,g_2,\ldots,g_n) \in A \} \subseteq G$. Given an equation $t \in F_m(X_1,\ldots,X_m)$, we can define an equation $\widehat{t} \in F_{m+n-1}(X_1,\ldots,X_{m+n-1})$ by setting \[ \widehat{t}(X_1,\ldots,X_{m+n-1}) = s(t(X_1,\ldots,X_m),X_{m+1},\ldots,X_{m+n-1}). \]
Now if for any $T \subseteq F_m$ we set $\widehat{T} = \{ \widehat{t} \mid t \in T \} \subseteq F_{m+n-1}$, then we have
\begin{align*}
V_{G,B}(T) &= \{ (h_1,\ldots,h_m) \in G^m \mid s(t(h_1,\ldots,h_m),g_2,\ldots,g_n) \in A \text{ for all } t \in T \} \\ &= \{ (h_1,\ldots,h_m) \in G^m \mid (h_1,\ldots,h_m,g_2,\ldots,g_n) \in V_{G,A}(\widehat{T}) \}.
\end{align*}
In particular, since for any $S \subseteq F_m$ we have $V_{G,A}(\widehat{S_0}) = V_{G,A}(\widehat{S})$ for some finite subset $S_0 \subseteq S$ (as $A \subseteq G$ is quasi-algebraic), it follows that we also have $V_{G,B}(S_0) = V_{G,B}(S)$, and so $B$ is quasi-algebraic in $G$, as required.
\item Suppose now that $G$ is equationally Noetherian and $H \leq G$ is a retract of $G$: that is, there exists a homomorphism $\varphi: G \to H$ such that $\varphi(h) = h$ for all $h \in H$. Hence, for any $g \in G$, we have $g^{-1}\varphi(g) = 1$ if and only if $g \in H$. Given an equation $t \in F_n(X_1,\ldots,X_n)$, we can define an equation $\widehat{t} \in F_{2n}(X_1,\ldots,X_{2n})$ by setting \[ \widehat{t}(X_1,\ldots,X_{2n}) = t(X_1,\ldots,X_n)^{-1} t(X_{n+1},\ldots,X_{2n}). \]
Now if for any $T \subseteq F_n$ we set $\widehat{T} = \{ \widehat{t} \mid t \in T \} \subseteq F_{2n}$, then we have
\begin{align*}
V_{G,H}(T) &= \{ (g_1,\ldots,g_n) \in G^n \mid t(g_1,\ldots,g_n) \in H \text{ for all } t \in T \} \\ &= \{ (g_1,\ldots,g_n) \in G^n \mid t(g_1,\ldots,g_n)^{-1} t(\varphi(g_1),\ldots,\varphi(g_n)) = 1 \text{ for all } t \in T \} \\ &= \{ (g_1,\ldots,g_n) \in G^n \mid (g_1,\ldots,g_n,\varphi(g_1),\ldots,\varphi(g_n)) \in V_G(\widehat{T}) \}.
\end{align*}
In particular, since for any $S \subseteq F_n$ we have $V_G(\widehat{S_0}) = V_G(\widehat{S})$ for some finite subset $S_0 \subseteq S$ (as $G$ is equationally Noetherian), it follows that we also have $V_{G,H}(S_0) = V_{G,H}(S)$, and so $H$ is quasi-algebraic in $G$, as required.
\end{enumerate}
The last part follows from \ref{i:qa-eq}: namely, if $A \subseteq G$ is quasi-algebraic and $g \in G$, then $Ag$, $gA$ and $A^{-1}$ are quasi-algebraic (consider equations $X_1X_2,X_2X_1 \in F_2(X_1,X_2)$ and $X_1^{-1} \in F_1(X_1)$).
\end{proof}

In Section \ref{ss:combEN}, we use an alternative definition of quasi-algebraic subsets, inspired by the viewpoint of D.~Groves and M.~Hull \cite{gh}. For this, let $G$ be a group and $A \subseteq G$ a subset. Let $\omega$ be a non-principal ultrafilter (see Section~\ref{s:limit} for the relevant definitions). Given a finitely generated group $F$ and a sequence of homomorphisms $(\varphi_j: F \to G)_{j \in \mathbb{N}}$, we define
\[
K_A^{(\varphi_j)} = \{ f \in F \mid \varphi_j(f) \in A \text{ $\omega$-almost surely} \}.
\]
It is easy to see that if $A$ is a subgroup (respectively normal subgroup) of $G$, then $K_A^{(\varphi_j)}$ is a subgroup (respectively normal subgroup) of $F$.

The following result generalises a result of D.~Groves and M.~Hull \cite[Theorem 3.5]{gh}. Its proof is almost identical to the one given in \cite{gh}, and is therefore omitted.

\begin{thm} \label{t:qae}
Let $\omega$ be a non-principal ultrafilter. For a group $G$ and a subset $A \subseteq G$, the following are equivalent:
\begin{enumerate}[label={\textup{(\roman*)}}]
\item \label{i:tqae-n} $A$ is quasi-algebraic in $G$.
\item \label{i:tqae-as} For any finitely generated group $F$ and any sequence $(\varphi_j: F \to G)_{j \in \mathbb{N}}$ of homomorphisms, we $\omega$-almost surely have $K_A^{(\varphi_j)} \subseteq \varphi_j^{-1}(A)$.
\item \label{i:tqae-ex} For any finitely generated group $F$ and any sequence $(\varphi_j: F \to G)_{j \in \mathbb{N}}$ of homomorphisms, we have $K_A^{(\varphi_j)} \subseteq \varphi_j^{-1}(A)$ for some $j \in \mathbb{N}$. \qed
\end{enumerate}
\end{thm}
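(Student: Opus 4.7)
The plan is to follow the proof of \cite[Theorem 3.5]{gh} almost verbatim, with the subgroup $\ker(\varphi_j)$ in the original argument replaced throughout by the preimage $\varphi_j^{-1}(A)$. The implication \ref{i:tqae-as}~$\Rightarrow$~\ref{i:tqae-ex} is immediate: any subset of $\mathbb{N}$ of full $\omega$-measure is non-empty (indeed infinite, since $\omega$ is non-principal), so the $\omega$-almost sure inclusion holds for at least one $j$. The content lies in the pair \ref{i:tqae-n}~$\Rightarrow$~\ref{i:tqae-as} and \ref{i:tqae-ex}~$\Rightarrow$~\ref{i:tqae-n}.

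For \ref{i:tqae-n}~$\Rightarrow$~\ref{i:tqae-as}, I would fix a finitely generated group $F$ with generating set $\{x_1,\ldots,x_n\}$, a sequence $(\varphi_j: F \to G)_{j \in \mathbb{N}}$, and the induced surjection $\pi: F_n \twoheadrightarrow F$ sending $X_i$ to $x_i$. Set $\widehat{\varphi}_j = \varphi_j \circ \pi$ and consider the system
\[
S = \{ s \in F_n \mid \widehat{\varphi}_j(s) \in A \text{ $\omega$-almost surely} \} = \pi^{-1}(K_A^{(\varphi_j)}).
\]
By \ref{i:tqae-n} there is a finite subset $S_0 \subseteq S$ with $V_{G,A}(S_0) = V_{G,A}(S)$. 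Writing $\mathbf{g}^{(j)} = (\widehat{\varphi}_j(X_1),\ldots,\widehat{\varphi}_j(X_n))$, finite additivity of $\omega$ ensures that $\omega$-almost surely $\widehat{\varphi}_j(s) \in A$ for all $s \in S_0$ simultaneously; hence $\mathbf{g}^{(j)} \in V_{G,A}(S_0) = V_{G,A}(S)$ $\omega$-almost surely, so $\widehat{\varphi}_j(s) \in A$ for every $s \in S$. Since $K_A^{(\varphi_j)} = \pi(S)$, this yields $K_A^{(\varphi_j)} \subseteq \varphi_j^{-1}(A)$ $\omega$-almost surely.

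For \ref{i:tqae-ex}~$\Rightarrow$~\ref{i:tqae-n}, I would argue by contrapositive. If $A$ is not quasi-algebraic, there exist $n \geq 1$ and a system $S = \{s_1,s_2,\ldots\} \subseteq F_n$ (necessarily countable as $F_n$ is) such that $V_{G,A}(S_j) \setminus V_{G,A}(S) \neq \emptyset$ for every $j \geq 0$, where $S_j = \{s_1,\ldots,s_j\}$; picking tuples in these differences defines homomorphisms $\varphi_j: F_n \to G$. For any fixed $s \in S$ the set $\{j \mid s \in S_j\}$ is cofinite and hence has $\omega$-measure $1$ by non-principality, which forces $\varphi_j(s) \in A$ $\omega$-almost surely, i.e.\ $s \in K_A^{(\varphi_j)}$. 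Thus $S \subseteq K_A^{(\varphi_j)}$, but by construction $S \not\subseteq \varphi_j^{-1}(A)$ for any $j$, contradicting \ref{i:tqae-ex}.

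The argument is entirely formal, and the only real subtlety is the use of finite additivity of $\omega$ in \ref{i:tqae-n}~$\Rightarrow$~\ref{i:tqae-as}: quasi-algebraicity is essential precisely because it reduces $S$ to a \emph{finite} subsystem $S_0$, so that the finitely many $\omega$-almost sure events ``$\widehat{\varphi}_j(s) \in A$'' for $s \in S_0$ can be combined to occur simultaneously $\omega$-almost surely -- no such reduction would be available if $S_0$ were infinite, which is the fundamental reason an analogue of this theorem fails if the word ``finite'' is dropped from the definition of quasi-algebraicity.
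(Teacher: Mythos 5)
Your proof is correct and is precisely the argument the paper has in mind: the paper explicitly omits its own proof, stating that it ``is almost identical to the one given in [gh]'' (where $\ker(\varphi_j)$ is replaced by $\varphi_j^{-1}(A)$ and $V_G$ by $V_{G,A}$), and that is exactly the substitution you carry out. Your write-up fills in the details cleanly, and the reduction from a general finitely generated $F$ to the free group $F_n$ via $\pi$, the use of finite additivity to pass from the finitely many $\omega$-a.s.\ events indexed by $S_0$ to their simultaneous occurrence, and the contrapositive for \ref{i:tqae-ex}~$\Rightarrow$~\ref{i:tqae-n} using the cofinality of $\{j \mid s \in S_j\}$ are all sound.
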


\subsection{Acylindrical hyperbolicity} \label{ss:acyl}

Recall that an action of a group $G$ on a geodesic metric space $X$ by isometries is said to be \emph{acylindrical} if for each $\varepsilon \geq 0$, there exist constants $D_\varepsilon,N_\varepsilon \geq 0$ such that for any $x,y \in X$ with $d_X(x,y) \geq D_\varepsilon$, the set $\mathcal{N}_{x,y,\varepsilon} = \{ g \in G \mid d_X(x,xg) \leq \varepsilon, d_X(y,yg) \leq \varepsilon \}$ has cardinality $\leq N_\varepsilon$. Such an action is said to be \emph{non-elementary} if $G$ is not virtually cyclic and some orbit under the action is unbounded. A group that acts non-elementarily acylindrically on a hyperbolic metric space is called \emph{acylindrically hyperbolic}.

We highlight one situation when the action of the fundamental group of a graph of groups on the associated Bass--Serre tree is acylindrical. In the following Lemma, we say a collection of subgroups $( H_\lambda \mid \lambda \in \Lambda )$ of a group $G$ is \emph{uniformly almost malnormal} if there exists a constant $D < \infty$ such that whenever $\lambda,\mu \in \Lambda$ and $g \in G$ satisfy $|H_\lambda \cap H_\mu^g| > D$, we have $\lambda = \mu$ and $g \in H_\lambda$.

The following result is well-known: see \cite{selaAH}, for instance. We include the proof here for completeness.

\begin{lem} \label{l:acyl}
Let $\mathcal{G}$ be a finite graph of groups. Suppose that for each $v \in V(\mathcal{G})$, the collection $( \iota_e(G_e) \mid e \in E(\mathcal{G}), i(e) = v )$ of subgroups of $G_v$ is uniformly almost malnormal. Then the action of $\pi_1(\mathcal{G})$ on the Bass--Serre tree $\mathcal{T_G}$ is acylindrical.
\end{lem}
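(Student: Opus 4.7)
The plan is to exploit convexity of the displacement function $p \mapsto d(p, gp)$ on $\mathcal{T_G}$ together with the malnormality hypothesis. First I will translate the hypothesis into the following tree-theoretic statement: there exists $D < \infty$ (taking the maximum of the uniform almost malnormality constants over the finitely many vertices of $\mathcal{G}$) such that the intersection $\Stab(\tilde{e}_1) \cap \Stab(\tilde{e}_2)$ of pointwise stabilizers of any two distinct edges $\tilde{e}_1, \tilde{e}_2$ of $\mathcal{T_G}$ sharing a common vertex has cardinality at most $D$. Indeed, after conjugating so that the common vertex has stabilizer $G_v$, this intersection takes the form $\iota_{e_1}(G_{e_1}) \cap g \iota_{e_2}(G_{e_2}) g^{-1}$ for some $e_1, e_2 \in E(\mathcal{G})$ with $i(e_1) = i(e_2) = v$ and some $g \in G_v$, and since $\tilde{e}_1 \neq \tilde{e}_2$ the triple $(e_1, e_2, g)$ avoids the trivial case $(e, e, g)$ with $g \in \iota_e(G_e)$; uniform almost malnormality then bounds the intersection by $D$. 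Consequently, the pointwise stabilizer of any path of length $\geq 2$ in $\mathcal{T_G}$ has cardinality $\leq D$.

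By a standard reduction I may restrict to $x, y$ vertices of $\mathcal{T_G}$ and $\varepsilon \in \mathbb{Z}_{\geq 0}$. I will set $D_\varepsilon := 3\varepsilon + 2$ and, for vertices $x, y$ with $d(x, y) \geq D_\varepsilon$, let $x'', y''$ be the vertices of $[x, y]$ at distance $\varepsilon$ from $x, y$ respectively, so that $d(x'', y'') \geq \varepsilon + 2$. For $g \in \mathcal{N}_{x, y, \varepsilon}$, convexity of the displacement function yields $d(p, gp) \leq \varepsilon$ for all $p \in [x, y]$. Using the standard dichotomy that a tree isometry is either elliptic (with fixed subtree $F_g$ and $d(p, F_g) = d(p, gp)/2$) or hyperbolic (with axis $A_g$, translation length $\ell_g > 0$, and $d(p, gp) = 2 d(p, A_g) + \ell_g$), a projection argument shows $[x'', y''] \subset F_g$ in the elliptic case and $[x'', y''] \subset A_g$ (with $\ell_g \leq \varepsilon$) in the hyperbolic case. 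Every elliptic element therefore lies in the pointwise stabilizer of the common path $[x'', y'']$, a set of cardinality $\leq D$.

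For the hyperbolic case, each such $g$ acts on $[x'', y'']$ as a translation by $\ell_g \in \{1, \ldots, \varepsilon\}$ in one of two possible directions, giving at most $2\varepsilon$ possible pairs (translation length, direction). The key observation is that if $g, g' \in \mathcal{N}_{x, y, \varepsilon}$ are two hyperbolic elements with the same such pair, then for every $p \in [x'', y'']$ whose image $p + \ell_g$ (along the common direction) also lies in $[x'', y''] \subset A_g \cap A_{g'}$, both $g$ and $g'$ send $p$ to the same point $p + \ell_g$, and hence $g^{-1} g'$ pointwise fixes a subsegment of $[x'', y'']$ of length $\geq d(x'', y'') - \ell_g \geq 2$. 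By the first paragraph, $g^{-1} g'$ then lies in a set of cardinality $\leq D$, so each of the $\leq 2\varepsilon$ pairs contributes $\leq D$ hyperbolic elements, yielding $|\mathcal{N}_{x, y, \varepsilon}| \leq (2\varepsilon + 1) D =: N_\varepsilon$ in total. The main obstacle is precisely this hyperbolic case: a priori many hyperbolic elements could have axes coinciding along $[x'', y'']$ but extending differently beyond, and the uniform almost malnormality hypothesis is exactly what forces distinct hyperbolic elements with matching translation data to differ by a pointwise stabilizer of a long path.
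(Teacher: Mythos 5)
Your proof is correct, and the first step (deducing from uniform almost malnormality that the pointwise stabiliser of any two distinct edges of $\mathcal{T_G}$ sharing a vertex, hence of any geodesic of length $\geq 2$, has order $\leq D$) is essentially the same as in the paper. Where you diverge is in how you pass from this bound to acylindricity: the paper invokes \cite[Lemma 4.2]{mo} as a black box, which directly packages $\mathcal{N}_{x,y,\varepsilon}$ into $2(2\varepsilon+1)$ cosets of the two-point stabiliser; you instead give a self-contained argument via convexity of the displacement function and the elliptic/hyperbolic dichotomy for tree isometries, partitioning $\mathcal{N}_{x,y,\varepsilon}$ by translation length and direction along the common subsegment $[x'',y'']$. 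Both are valid; the paper's route is shorter at the cost of an external citation, while yours is longer but elementary and makes the mechanism explicit (it is, in effect, an inline proof of the content of \cite[Lemma 4.2]{mo} in the special case needed). The constants differ ($D_\varepsilon = 3\varepsilon+2$ and $N_\varepsilon = (2\varepsilon+1)D$ versus $D_\varepsilon = 2\varepsilon+1$ and $N_\varepsilon = 2D(2\varepsilon+1)$), but either choice establishes acylindricity. One small point worth making explicit when you write this up: the claim $[x'',y''] \subseteq F_g$ (resp.\ $\subseteq A_g$) needs the observation that $[x,y]$ meets $F_g$ (resp.\ $A_g$) in a subsegment of length at least $d(x,y) - \varepsilon \geq 2\varepsilon+2$ whose endpoints lie within $\varepsilon/2$ of $x$ and $y$, so that $x''$ and $y''$ fall inside it; you gesture at this with ``a projection argument'' but the quantitative comparison of $d(a',b')$ with $\varepsilon$ is what makes it work.
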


\begin{proof}
Let $G = \pi_1(\mathcal{G})$ and $\mathcal{T} = \mathcal{T_G}$. By the assumption, and as the underlying graph of $\mathcal{G}$ is finite, there exists a constant $D < \infty$ such that for all $v \in V(\mathcal{G})$, all $e,f \in E(\mathcal{G})$ with $i(e) = v = i(f)$ and all $g \in G_v$, we have $|\iota_e(G_e) \cap \iota_f(G_f)^g| \leq D$ whenever $e \neq f$ or $g \notin \iota_e(G_e)$.

Now let $p,q \in \mathcal{T}$ be two points such that $d_{\mathcal{T}}(p,q) > 1$. We claim that $|\Stab_G(p) \cap \Stab_G(q)| \leq D$. Indeed, let $P \subseteq \mathcal{T}$ be the geodesic between $p$ and $q$. Then there exists a vertex $\widetilde{v} \in V(\mathcal{P})$ in the interior of $P$, and so there exist two distinct edges $\widetilde{e},\widetilde{f} \in E(\mathcal{T})$ with $i(\widetilde{e}) = \widetilde{v} = i(\widetilde{f})$ such that $P \cap \widetilde{e} \neq \varnothing$ and $P \cap \widetilde{f} \neq \varnothing$. In particular, any $g \in G$ stabilising $p$ and $q$ also stabilises $\widetilde{e}$ and $\widetilde{f}$.

Now let $v \in V(\mathcal{G})$ and $e,f \in E(\mathcal{G})$ be the images of $\widetilde{v}$ and $\widetilde{e},\widetilde{f}$ in $\mathcal{G}$, respectively. It then follows from the construction of the Bass--Serre tree $\mathcal{T}$ that $\Stab_G(\widetilde{v}) = G_v^h$ for some $h \in G$, and that $\Stab_G(\widetilde{e}) = \iota_e(G_e)^{h_eh} \leq G_v^h$ and $\Stab_g(\widetilde{f}) = \iota_f(G_f)^{h_fh} \leq G_v^h$ for some $h_e,h_f \in G_v$. Moreover, since $\widetilde{e} \neq \widetilde{f}$, we either have $e \neq f$ or $h_fh_e^{-1} \notin \iota_e(G_e)$. We thus have
\[
\Stab_G(p) \cap \Stab_G(q) \subseteq \Stab_G(\widetilde{e}) \cap \Stab_G(\widetilde{f}) = \left( \iota_e(G_e) \cap \iota_f(G_f)^{h_fh_e^{-1}} \right)^{h_eh}
\]
and so $|\Stab_G(p) \cap \Stab_G(q)| \leq D$, as claimed.

Now let $\varepsilon > 1$, and set $D_\varepsilon = 2\varepsilon+1$ and $N_\varepsilon = 2D(2\varepsilon+1)$. If $x,y \in \mathcal{T}$ are two points with $d_{\mathcal{T}}(x,y) \geq D_\varepsilon$, it follows from \cite[Lemma 4.2]{mo} that the set $\mathcal{N}_{x,y,\varepsilon} = \{ g \in G \mid d_{\mathcal{T}}(x,xg) \leq \varepsilon, d_{\mathcal{T}}(y,yg) \leq \varepsilon \}$ is contained in the union of at most $2(2\varepsilon+1)$ right cosets of $\Stab_G(p) \cap \Stab_G(q)$, where $p,q \in \mathcal{T}$ satisfy $d_{\mathcal{T}}(p,q) \geq \varepsilon > 1$. Thus $|\mathcal{N}_{x,y,\varepsilon}| \leq D \cdot 2(2\varepsilon+1) = N_\varepsilon$. Therefore, the action of $G$ on $\mathcal{T}$ is acylindrical, as required.
\end{proof}

The following consequence is almost contained in Lemma~\ref{l:acyl} and \cite[Theorem~1.4]{osin06def}. However, the latter result is only enough to show that the subgroups $\iota_e(G_e) \cap \iota_f(G_f)^g$ of $G_v$ are finite, but not to bound their order (here $g \in G_v$ and $e,f \in E(\mathcal{G})$ are such that $i(e) = i(f) = v$ and either $e \neq f$ or $g \notin \iota_e(G_e)$). We therefore give a complete proof of the following Corollary.

\begin{cor}
Let $\mathcal{G}$ be a finite graph of groups. Suppose that for each $v \in V(\mathcal{G})$, the group $G_v$ is finitely generated and relatively hyperbolic, and $( \iota_e(G_e) \mid e \in E(\mathcal{G}), i(e) = v )$ is a collection of distinct peripheral subgroups of $G_v$. Then the action of $\pi_1(\mathcal{G})$ on the Bass--Serre tree $\mathcal{T_G}$ is acylindrical.
\end{cor}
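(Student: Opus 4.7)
The plan is to verify the hypothesis of Lemma~\ref{l:acyl} and invoke it. For each vertex $v \in V(\mathcal{G})$ I seek a constant $D_v < \infty$ such that the family $\bigl(\iota_e(G_e)\bigr)_{i(e) = v}$ is uniformly almost malnormal in $G_v$; since $V(\mathcal{G})$ is finite, the maximum $D := \max_{v \in V(\mathcal{G})} D_v$ then supplies the single constant required by Lemma~\ref{l:acyl}, and the acylindricity of $\pi_1(\mathcal{G}) \curvearrowright \mathcal{T_G}$ follows at once.

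Fix $v$. By hypothesis, the subgroups $\iota_e(G_e)$ with $i(e) = v$ form a finite collection of pairwise distinct peripheral subgroups of the finitely generated relatively hyperbolic group $G_v$, so it is enough to prove the following general claim: if $(G, \mathcal{P})$ is a finitely generated relatively hyperbolic group, then there exists $D < \infty$ with $|P \cap Q^g| \leq D$ whenever $P, Q \in \mathcal{P}$ and $g \in G$ satisfy either $P \neq Q$ or $g \notin P$. The finiteness of each such intersection is exactly \cite[Theorem~1.4]{osin06def}, which the author recalls in the remark immediately preceding the Corollary; what remains is the quantitative upgrade to a uniform bound.

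For this I would use Bowditch's fine-hyperbolic-graph model of relative hyperbolicity: $G$ acts on a connected, fine, hyperbolic graph $K$ cocompactly on the edges and with finite edge stabilisers, and each $P \in \mathcal{P}$ coincides with the stabiliser of a distinguished vertex $v_P \in V(K)$. Thus each intersection $P \cap Q^g$ under consideration is precisely the stabiliser in $G$ of the pair of distinct vertices $\{v_P,\, g \cdot v_Q\}$ of $K$. The action $G \curvearrowright K$ is acylindrical, so there are constants $D_0, N_0$ such that $|\Stab(x) \cap \Stab(y)| \leq N_0$ whenever $d_K(x,y) \geq D_0$; this handles the regime where the two vertices lie far apart in $K$.

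The main obstacle is the complementary short-distance regime $2 \leq d_K(v_P, g v_Q) < D_0$, for which the acylindricity constant $N_0$ does not directly apply. Here I would exploit fineness: for each $n$ there are only finitely many geodesics of length $\leq n$ between two given vertices of $K$, so every element of $\Stab(v_P) \cap \Stab(g v_Q)$ permutes a finite collection of geodesics, and the kernel of this permutation action must pointwise fix some edge of $K$. Since edge-cocompactness forces only finitely many $G$-conjugacy classes of edge stabilisers, all of which are finite, this kernel has uniformly bounded order; combining with the bound $N_0$ from the far-apart case yields the desired $D$. Converting the qualitative finiteness inherent in fineness into a quantitative bound depending only on $D_0$ and the $G$-orbit data on edges is the delicate step of the argument.
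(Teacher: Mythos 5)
Your reduction is correct and matches the paper's strategy: both verify the hypothesis of Lemma~\ref{l:acyl} by proving that the peripheral subgroups at each vertex form a uniformly almost malnormal family in $G_v$, then take a maximum over the finitely many vertices. Where you diverge is in the model of relative hyperbolicity: the paper works in Osin's relative Cayley graph $\Gamma(G,X\sqcup\mathcal{H})$ and derives the uniform bound directly from the quantitative statement \cite[Theorem~3.23(2)]{osin06def}, which bounds the $X$-length of every element of $H_e\cap H_f^g$ by a constant $\varepsilon=\varepsilon(1,2,0)$ independent of $g$.

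The gap in your argument is exactly at the step you flag as ``delicate,'' and it is not a presentational detail but the whole content of the lemma. In the intermediate regime $2\le d_K(v_P,gv_Q)<D_0$, you propose to permute the finitely many geodesics between $v_P$ and $gv_Q$ and observe that the kernel of this permutation action fixes an edge, hence has order bounded by the (finitely many) edge stabilisers. This correctly bounds the kernel, but leaves the image unbounded unless one bounds the \emph{number} of such geodesics \emph{uniformly in $g$}. Fineness gives only finiteness for each fixed pair of vertices; as $g$ ranges over $G$ (equivalently over the infinitely many double cosets $P\backslash G/Q$), there is no a priori bound on the number of arcs of length $<D_0$ between $v_P$ and $gv_Q$, and cocompactness on edges does not obviously supply one because the graph is not locally finite at the parabolic vertices. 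This is precisely the ``qualitative vs.\ quantitative'' issue the paper singles out in the remark preceding the Corollary: Osin's \cite[Theorem~1.4]{osin06def} (or, in your framework, fineness) shows each intersection $P\cap Q^g$ is finite, but does not bound its order, and the point of the Corollary's proof is to supply that bound. Your sketch stops just before doing so, whereas the paper replaces the fineness argument with \cite[Theorem~3.23]{osin06def}, which is already stated with explicit constants and so delivers the uniform bound with no further work. To rescue your route one would need a genuine argument (e.g.\ converting a large $H$-orbit of a fixed interior vertex $m_0$ into many circuits of bounded length through the edge $\{v_P,m_0\}$, then invoking cocompactness to make the per-edge circuit count uniform); as written, that argument is absent.
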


\begin{proof}
Let $v \in V(\mathcal{G})$. By Lemma~\ref{l:acyl}, it is enough to show that the collection $( \iota_e(G_e) \mid e \in E(\mathcal{G}), i(e) = v )$ of subgroups of $G_v$ is uniformly almost malnormal. To show this, we will use \cite[Theorem~3.23]{osin06def}.

Let $G = G_v$ and $\mathcal{H} = \bigsqcup_{e \in E(\mathcal{G}), i(e) = v} H_e$, where $H_e = \iota_e(G_e)$. We adopt the terminology and notation of \cite{osin06def}: in particular, let $X$ be a finite generating set for $G$ satisfying the assumptions in the beginning of \cite[\S3]{osin06def}, and denote by $\Gamma(G,X \sqcup \mathcal{H})$ the Cayley graph of $G$ with respect to $X \sqcup \mathcal{H}$. Let $\varepsilon = \varepsilon(1,2,0)$ be the constant given in \cite[Theorem~3.23]{osin06def}.

We claim that, given any $g \in G$ and $e,f \in E(\mathcal{G})$ with $i(e) = i(f) = v$, if either $e \neq f$ or $g \notin H_e$, then $|H_e \cap H_f^g| \leq D$, where $D$ is the number of elements in $G$ of word length $\leq \varepsilon$ with respect to $X$: since $X$ is finite, this will prove the result.

Thus, let $g \in G$ and $e,f \in E(\mathcal{G})$ with $i(e) = i(f) = v$, such that either $e \neq f$ or $g \notin H_e$, and consider $H_e \cap H_f^g$. Let $w \in (X \sqcup \mathcal{H})^*$ be a geodesic word representing $g$. Without loss of generality, suppose that no non-empty initial subword of $w$ represents an element of $H_f$: otherwise, we may replace $w$ with a shorter word and $g$ with another element. Similarly, assume that no non-empty terminal subword of $w$ represents an element of $H_e$: otherwise, we may replace $w$ with a shorter word, $g$ with another element, and the subgroup $H_e \cap H_f^g$ with its conjugate.

We now claim that $|h|_X \leq \varepsilon$ for all $h \in H_e \cap H_f^g$: this is enough to prove our original claim. Thus, let $h \in H_e \cap H_f^g$ be a non-trivial element, let $h' = ghg^{-1} \in H_f$, and let $P'Q'P^{-1}Q^{-1}$ be a cycle in $\Gamma(G,X \sqcup \mathcal{H})$, where $P$, $P'$, $Q$ and $Q'$ are geodesic paths labelled by the $H_e$-letter $h$, the $H_f$-letter $h'$, the word $w$ and the word $w$, respectively. It follows from the assumptions on $w$ that both $Q'$ and $Q^{-1}$ are components of this cycle and that they are not connected to any component of $P'$ or of $P^{-1}$. Since $P$ and $P'$ are geodesics and $|Q| = |Q'| = 1$, it then follows that $QP$ and $P'Q'$ are $0$-similar $(1,2)$-quasi-geodesics without backtracking. We have already seen that $Q$ is a component of $QP$ not connected to any component of $P'$; moreover, since either $e \neq f$ or $g \notin H_e$, it follows that $Q$ is not connected to $Q'$. It follows from \cite[Theorem~3.23(2)]{osin06def} that $|h|_X \leq \varepsilon$, as claimed.
\end{proof}

\subsection{Equationally Noetherian vertex groups} \label{ss:combEN}

Here we prove the implication ($\Leftarrow$) in Theorem~\ref{t:comb}\ref{i:tcomb-EN}.

Acylindricity of the action of $\pi_1(\mathcal{G})$ on $\mathcal{T_G}$ allows us to apply the following result of \cite{gh}. Here, let a group $G$ act non-elementarily acylindrically on a fixed hyperbolic space $X$, let $F$ be a group with a finite generating set $S$, and let $\omega$ be a non-principal ultrafilter. We then say that a sequence of homomorphisms $(\varphi_j: F \to G)_{j \in \mathbb{N}}$ is \emph{non-divergent} if
\[
\lim_\omega \inf_{x \in X} \max_{s \in S} d_X(x,x\cdot\varphi_j(s)) < \infty.
\]
In the view of Theorem~\ref{t:qae}, the following result then says that in order to show that an acylindrically hyperbolic group is equationally Noetherian, only considering non-divergent sequences of homomorphisms is enough.
\begin{thm}[D.~Groves and M.~Hull {\cite[Theorem B]{gh}}] \label{t:gh}
Let $G$ be an acylindrically hyperbolic group. Suppose that for any finitely generated group $F$ and any non-divergent sequence of homomorphisms $(\varphi_j: F \to G)_{j \in \mathbb{N}}$, we have $K_{\{1\}}^{(\varphi_j)} \subseteq \ker(\varphi_j)$ $\omega$-almost surely. Then $G$ is equationally Noetherian.
\end{thm}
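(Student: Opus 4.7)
The plan is to use Theorem~\ref{t:qae} as a gateway: it suffices to prove that for every finitely generated group $F$ with finite generating set $S$ and every sequence of homomorphisms $(\varphi_j : F \to G)_{j \in \mathbb{N}}$, the subgroup $K = K_{\{1\}}^{(\varphi_j)}$ is $\omega$-almost surely contained in $\ker(\varphi_j)$. The hypothesis delivers exactly this under a non-divergence assumption, so the entire argument reduces to showing that, in the divergent case, I can replace the given sequence by a non-divergent one without changing $K$ or the relevant kernels.

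The first move is conjugation. Since $K$ and the property ``$K \subseteq \ker(\varphi_j)$ $\omega$-a.s.'' depend only on $\varphi_j$ up to post-composition with inner automorphisms of $G$, I may replace each $\varphi_j$ by $g \mapsto h_j^{-1} \varphi_j(g) h_j$ for any $h_j \in G$. I choose $h_j$ so that the basepoint $x_0 \cdot h_j$ approximately realises the infimum $\lambda_j := \inf_{x \in X}\max_{s \in S} d_X(x, x \cdot \varphi_j(s))$, so after conjugation the basepoint itself witnesses this infimum. If $\lambda_j$ stays bounded along an $\omega$-large set, the modified sequence is non-divergent and the hypothesis gives $K \subseteq \ker(\varphi_j)$ $\omega$-a.s. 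Otherwise $\lambda_j \to \infty$ along $\omega$, and I pass to a Bestvina--Paulin $\omega$-rescaling: take the ultralimit of the pointed metric spaces $(X, d_X/\lambda_j, x_0)$ together with the $F$-actions through $\varphi_j$. Since $X$ is $\delta$-hyperbolic and $\delta/\lambda_j \to 0$, this limit is an $\mathbb{R}$-tree $T$ with a distinguished point $x_\infty$ and an isometric action $F \curvearrowright T$. The normalisation of $\lambda_j$ forces some generator to displace $x_\infty$ by exactly $1$, so the action is non-trivial; meanwhile any $k \in K$ satisfies $\varphi_j(k) = 1$ $\omega$-a.s., hence acts trivially on the orbit of $x_\infty$ and, after restricting to the minimal invariant subtree, trivially on $T$. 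The action therefore descends to a non-trivial action $F/K \curvearrowright T$.

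A crucial intermediate claim is that acylindricity of $G \curvearrowright X$ passes to a form of \emph{stable} arc stabilisers (in fact bounded-size arc stabilisers) for $F/K \curvearrowright T$: the constants $(D_\varepsilon, N_\varepsilon)$ from the acylindricity definition survive the rescaling when $\varepsilon$ is taken proportional to $\lambda_j$, which after dividing by $\lambda_j$ yields uniform bounds on segment stabilisers in $T$. In particular tripod stabilisers are trivial, and the action satisfies the hypotheses of the Rips--Bestvina--Feighn classification of $\mathbb{R}$-tree actions, so $T$ decomposes as a graph of actions whose components are of axial, surface (Seifert) or simplicial type.

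The final and most delicate step is a Rips--Sela style shortening argument. Using the modular group of the decomposition (Dehn twists along simplicial edges and along boundary curves of surface pieces, plus generalised rotations on axial components), one can exhibit, for each $j$, inner-conjugates $\varphi_j'$ of $\varphi_j$ whose max-displacement $\lambda_j'$ is strictly smaller than $\lambda_j$ in the limit — ultimately producing a conjugate sequence whose rescaling limit is trivial, i.e.\ a non-divergent sequence. Since $K$ is unchanged under this shortening, the hypothesis applied to $(\varphi_j')$ yields $K \subseteq \ker(\varphi_j')$ $\omega$-a.s., and since $\varphi_j'$ is conjugate to $\varphi_j$ this gives $K \subseteq \ker(\varphi_j)$ $\omega$-a.s., contradicting the supposed divergence regime. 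The main obstacle throughout is making the transfer of acylindricity in the limit sufficiently quantitative to run the shortening machinery and to exclude pathological mixing between components of the graph of actions — a subtle but by now standard point in the Sela / Reinfeldt--Weidmann / Groves--Hull framework.
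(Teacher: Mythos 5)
The paper does not prove this theorem. Theorem~\ref{t:gh} is cited verbatim from Groves and Hull \cite[Theorem~B]{gh} and is used only as a black box in the proof of Proposition~\ref{p:comb-EN-<=}; there is no argument in this paper against which to compare. Your proposal is therefore attempting to reconstruct the external Groves--Hull proof, and as such the broad outline is the right one: reduce via Theorem~\ref{t:qae}, dispose of the non-divergent case by hypothesis, and in the divergent case pass to a rescaled ultralimit $\mathbb{R}$-tree, run the Rips machine, and shorten.

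That said, two steps in your sketch are genuinely wrong, not merely imprecise. First, the shortening automorphisms are \emph{not} inner. The modular group consists of Dehn twists and related automorphisms, and the shortened sequence has the form $\varphi_j' = (\text{inner}) \circ \varphi_j \circ \tau_j$ with $\tau_j$ a modular automorphism of $F$ (or of the limit quotient). Your assertions that ``$K$ is unchanged under this shortening'' and that ``$\varphi_j'$ is conjugate to $\varphi_j$'' are both false: pre-composition by $\tau_j$ alters $\ker\varphi_j$ and alters $K_{\{1\}}^{(\varphi_j)}$, and controlling how these move under the modular group is precisely the technical heart of the shortening argument. Second, your closing ``contradiction'' is not a contradiction at all: $K\subseteq\ker(\varphi_j)$ $\omega$-a.s.\ is the desired conclusion, and it is perfectly consistent with $\lambda_j\to\infty$. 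The actual argument must instead set up a minimality or descending-chain property on the sequence (e.g.\ choosing $\varphi_j$ shortest in its modular orbit) and derive a contradiction from the existence of a strictly shorter representative. Finally, the claim that acylindricity of $G\curvearrowright X$ transfers to bounded arc stabilisers of $F/K\curvearrowright T$ by rescaling the constants $(D_\varepsilon,N_\varepsilon)$ elides the fact that stabilisers in $T$ live in $F/K$, not in $G$, and that one must handle non-faithful actions and the stability of the kernel sequence; Groves and Hull devote real work to this.
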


The following result is the key part in our proof of Theorem~\ref{t:comb}\ref{i:tcomb-EN}. It shows that, under the assumptions of Theorem~\ref{t:comb}\ref{i:tcomb-EN}, the group $\pi_1(\mathcal{G})$ and subset $A = \{1\}$ satisfy condition \ref{i:tqae-as} of Theorem~\ref{t:qae} for some specific sequences of homomorphisms.

\begin{prop} \label{p:comb}
Let $\mathcal{G}$ be a finite graph of groups satisfying the assumptions \ref{i:tcomb-ext} and \ref{i:tcomb-fin1v} in Theorem~\ref{t:comb}, and suppose that each group $G_v$ is equationally Noetherian and each subgroup $\iota_e(G_e) \leq G_{i(e)}$ is quasi-algebraic. Let $\{ F(v) \}_{v \in V(\mathcal{G})}$ be a collection of finitely generated free groups, let $F_E$ be a free group with free basis $\{ X_e \mid e \in E(\mathcal{G}) \}$, and let $F = (\ast_{v \in V(\mathcal{G})} F(v) ) * F_E$. Let $\varphi_j: F \to \pi_1(\mathcal{G})$ be a sequence of homomorphisms such that, for all $j$, $\varphi_j(F(v)) \subseteq G_v$ for each $v \in V(\mathcal{G})$ and $\varphi_j(X_e) = e$ for each $e \in E(\mathcal{G})$. Then, for any non-principal ultrafilter $\omega$, $K^{(\varphi_j)}_{\{1\}} \subseteq \ker(\varphi_j)$ $\omega$-almost surely.
\end{prop}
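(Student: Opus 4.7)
The plan is to combine the Bass--Serre normal form theorem (Theorem~\ref{t:ftbst}) with a cancellation argument, organised so that a single $\omega$-almost sure set $J_0$ suffices for every $f \in K^{(\varphi_j)}_{\{1\}}$. The finiteness afforded by hypothesis~\ref{i:tcomb-fin1v}, upgraded to Lemma~\ref{l:fin1v2v}, is essential: it keeps $J_0$ a \emph{finite} intersection of $\omega$-almost sure sets (compatible with the finite additivity of $\omega$) rather than a countable one.

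I would first enlarge the bookkeeping at each vertex. For $v, w \in V(\mathcal{G})$, let $\mathcal{M}(v, w)$ be the finite set (Lemma~\ref{l:fin1v2v}) of homomorphisms $G_v \to G_w$ of the form $\overline\phi_{e_\ell} \circ \cdots \circ \overline\phi_{e_1}$ along a path from $v$ to $w$ in $\mathcal{G}$, with $\mathcal{M}(v, v)$ containing the identity. Define $\tilde F_w$ to be the free product of copies $F(v)_{(v, \mu)}$ of $F(v)$ indexed by pairs $(v, \mu)$ with $\mu \in \mathcal{M}(v, w)$: a finite free product of finitely generated free groups, hence finitely generated. Let $\tilde\varphi_j^w \colon \tilde F_w \to G_w$ be the sequence of homomorphisms acting on the $(v, \mu)$-factor as $\mu \circ \varphi_j|_{F(v)}$. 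Applying Theorem~\ref{t:qae} to $\tilde\varphi_j^w$ for the quasi-algebraic subsets $\{1\} \leq G_w$ (equational Noetherianity) and each edge subgroup $A = \iota_e(G_e) \leq G_w$ produces $\omega$-almost sure sets $J_w^{\mathrm{EN}}$ and $J_{w, A}^{\mathrm{QA}}$; as $V(\mathcal{G})$ and $E(\mathcal{G})$ are finite, their intersection $J_0$ is $\omega$-almost sure.

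The core claim is $J_0 \subseteq \{j : K^{(\varphi_j)}_{\{1\}} \subseteq \ker(\varphi_j)\}$. Fix $j \in J_0$ and $f \in K^{(\varphi_j)}_{\{1\}}$ in normal form $h_0 X_{e_1} h_1 \cdots X_{e_n} h_n$. Since $\varphi_j(f) = 1$ $\omega$-almost surely, Theorem~\ref{t:ftbst} combined with finite additivity pins down a single combinatorial cancellation schedule $\sigma_f$ (a non-crossing matching of inverse edges with a fixed order of reductions) which works $\omega$-almost surely. Each step of $\sigma_f$ uses the Bass--Serre relation to rewrite a subword $e_k \varphi_j(h_k) \overline{e_k}$ as $\overline\phi_{\overline{e_k}}(\varphi_j(h_k)) \in G_{v_{k-1}}$, valid when $\varphi_j(h_k) \in \iota_{\overline{e_k}}(G_{e_k})$. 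By induction on the number of reductions performed, each intermediate vertex element in $G_w$ is of the form $\tilde\varphi_j^w(\tilde g)$ for some $\tilde g \in \tilde F_w$, the recipe being to post-compose the $\mu$-index of each constituent factor with $\overline\phi_{\overline{e_k}}$, so that the new indices $\overline\phi_{\overline{e_k}} \circ \mu$ land back in $\mathcal{M}(v, v_{k-1})$. Membership in the relevant $J_{w, A}^{\mathrm{QA}}$ then upgrades each cancellation criterion from `holds $\omega$-a.s.' to `holds at this $j$', and the final surviving vertex element $\tilde\varphi_j^{v_0}(\tilde f) = \varphi_j(f) \in G_{v_0}$, being trivial $\omega$-almost surely, is forced to be $1$ by membership in $J_{v_0}^{\mathrm{EN}}$.

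The main obstacle will be verifying this bookkeeping -- that the nested compositions of extensions accumulating during the reduction stay within the pre-specified finite sets $\mathcal{M}(v, w)$, and that the resulting elements $\tilde g \in \tilde F_w$ at each stage are uniquely and canonically determined by $\sigma_f$. This is exactly what assumption~\ref{i:tcomb-fin1v} ensures; without it, the auxiliary groups $\tilde F_w$ would fail to be finitely generated, the intersection defining $J_0$ would become infinite, and finite additivity would no longer suffice.
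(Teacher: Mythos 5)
Your proposal is correct and takes essentially the same approach as the paper: the auxiliary free groups $\tilde F_w$ coincide with the paper's $\overline{F}(w)$ (free products of copies of $F(v)$ indexed by the finitely many path-maps $\mathcal{M}(v,w)$, which is exactly where hypothesis~\ref{i:tcomb-fin1v} via Lemma~\ref{l:fin1v2v} enters), and the induction via Theorem~\ref{t:ftbst} that you sketch is the same cancellation argument the paper packages as the identity $K^{(\overline\varphi_j)}_{\{1\}} = \langle\!\langle \mathcal{K}_E \cup \mathcal{K}_V \cup \mathcal{K}_0 \rangle\!\rangle$. The paper's presentation differs only cosmetically -- it globalises the $\tilde F_w$ into a single $\overline{F} = (\ast_v \overline{F}(v)) * F_E$ together with maps $\Phi_e$ intertwining $\overline\phi_e$, so that the ``bookkeeping obstacle'' you identify is discharged once and for all by exhibiting $\mathcal{K}$ as a normal generating set each of whose pieces lands in $\ker(\overline\varphi_j)$ $\omega$-almost surely, rather than by a per-element cancellation schedule $\sigma_f$.
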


\begin{proof}
The idea of the proof is to extend each map $\varphi_j: F \to \pi_1(\mathcal{G})$ to a map $\overline\varphi_j: \overline{F} \to \pi_1(\mathcal{G})$. Here the free group $\overline{F}$ is formed by adding new elements to a free basis of $F$, using assumption \ref{i:tcomb-ext} of Theorem~\ref{t:comb}, so that if we have a basis element mapping (under $\overline\varphi_j$) to $g \in G_{i(e)}$ then we also have one mapping to $\overline\phi_e(g) \in G_{i(\overline{e})}$. By assumption \ref{i:tcomb-fin1v} and Lemma~\ref{l:fin1v2v}, we can do this in such a way that $\overline{F}$ is still finitely generated. We then use the assumptions on equational Noetherianity of the $G_v$ and quasi-algebraicity of the $\iota_e(G_e)$ to describe the normal subgroup $K^{(\overline\varphi_j)}_{\{1\}} \unlhd \overline{F}$.

Given an edge $e \in E(\mathcal{G})$, let $\overline\phi_e: G_{i(e)} \to G_{i(\overline{e})}$ be the homomorphism given by assumption \ref{i:tcomb-ext} of Theorem~\ref{t:comb}. It then follows from assumption \ref{i:tcomb-fin1v} and Lemma~\ref{l:fin1v2v} that for each $v,w \in V(\mathcal{G})$, there exists a finite set of homomorphisms $\eta_{v,w,1},\ldots,\eta_{v,w,d(v,w)}: G_v \to G_w$ such that for every path $e_1 \cdots e_n$ in $\mathcal{G}$ from $v$ to $w$, we have $\overline\phi_{e_n} \circ \cdots \circ \overline\phi_{e_1} = \eta_{v,w,\delta}$ for some $\delta \in \{ 1,\ldots,d(v,w) \}$. Without loss of generality, assume that all the $\eta_{v,w,\delta}$ are distinct and arise this way for some path, and that $\eta_{v,v,1}: G_v \to G_v$ is the identity for each $v \in V(\mathcal{G})$. 

Now for each $v,w \in V(\mathcal{G})$ and each $\delta \in \{1,\ldots,d(w,v)\}$, let $F_{w,\delta}(v)$ be a free group isomorphic to $F(w)$, with an isomorphism $\zeta_{w,v,\delta}: F(w) \to F_{w,\delta}(v)$. For each $v \in V(\mathcal{G})$, we form a free group $\overline{F}(v) = \displaystyle\mathop\ast_{\substack{w \in V(\mathcal{G}) \\ 1 \leq \delta \leq d(w,v)}} F_{w,\delta}(v)$, and define $\overline{F} = \left( \displaystyle\mathop\ast_{v \in V(\mathcal{G})} \overline{F}(v) \right) * F_E$. We identify the subgroup $F_{v,1}(v) \leq \overline{F}(v)$ with $F(v)$ via the isomorphism $\zeta_{v,v,1}$: this gives an injective group homomorphism $\alpha: F \to \overline{F}$, defined as $\zeta_{v,v,1}$ on $F(v)$ and as identity on $F_E$.

We now define homomorphisms $\overline\varphi_j: \overline{F} \to \pi_1(\mathcal{G})$ by setting $\overline\varphi_j(X_e) = e$ for every $e \in E(\mathcal{G})$, and $\overline\varphi_j(g) = \eta_{v,w,\delta} (\varphi_j (\zeta_{v,w,\delta}^{-1}(g))) \in G_w$ for every $v,w \in V(\mathcal{G})$, every $\delta \in \{1,\ldots,d(v,w)\}$ and every $g \in F_{v,\delta}(w)$. It follows from the construction that $\varphi_j = \overline\varphi_j \circ \alpha$ and that $\overline\varphi(F(v)) \subseteq G_v$ for each $j$ and each $v \in V(\mathcal{G})$. It is therefore enough to show that $K^{(\overline\varphi_j)}_{\{1\}} \subseteq \ker(\overline\varphi_j)$ $\omega$-almost surely, since in that case we also have $K^{(\varphi_j)}_{\{1\}} = \alpha^{-1}\left(K^{(\overline\varphi_j)}_{\{1\}}\right) \subseteq \alpha^{-1}\left(\ker(\overline\varphi_j)\right) = \ker(\varphi_j)$ $\omega$-almost surely.

It follows from the construction that for each vertex $v \in V(\mathcal{G})$, each edge $e \in E(\mathcal{G})$ and each $\delta \in \{1,\ldots,d(v,i(e))\}$, there exists a unique integer $\overline\delta = \overline\delta(v,e,\delta) \in \{1,\ldots,d(v,i(\overline{e}))\}$ such that $\overline\phi_e \circ \eta_{v,i(e),\delta} = \eta_{v,i(\overline{e}),\overline\delta}$. This allows us to define a homomorphism $\Phi_e: \overline{F}(i(e)) \to \overline{F}(i(\overline{e}))$ by setting the restriction of $\Phi_e$ to $F_{v,\delta}(i(e))$ to be $\zeta_{v,i(\overline{e}),\overline\delta} \circ \zeta_{v,i(e),\delta}^{-1} : F_{v,\delta}(i(e)) \to F_{v,\overline\delta}(i(\overline{e}))$, where $\overline\delta = \overline\delta(v,e,\delta)$ is as in the previous sentence. This implies that we have, for each $j \in \mathbb{N}$, $v \in V(\mathcal{G})$, $e \in E(\mathcal{G})$, $\delta \in \{1,\ldots,d(v,i(e))\}$ and $g \in F_{v,\delta}(i(e))$,
\begin{align*}
(\overline\phi_e \circ \overline\varphi_j)(g) &= (\overline\phi_e \circ \eta_{v,i(e),\delta} \circ \varphi_j \circ \zeta_{v,i(e),\delta}^{-1})(g) = (\eta_{v,i(\overline{e}),\overline\delta} \circ \varphi_j \circ \zeta_{v,i(e),\delta}^{-1})(g) \\ &= \left((\eta_{v,i(\overline{e}),\overline\delta} \circ \varphi_j \circ \zeta_{v,i(\overline{e}),\overline\delta}^{-1}) \circ (\zeta_{v,i(\overline{e}),\overline\delta} \circ \zeta_{v,i(e),\delta}^{-1})\right)(g) = (\overline\varphi_j \circ \Phi_e)(g),
\end{align*}
and so $\overline\phi_e \circ \overline\varphi_j = \overline\varphi_j \circ \Phi_e$ 
for all $j \in \mathbb{N}$ and all $e \in E(\mathcal{G})$; here we have been abusing notation slightly by modifying the domains and codomains of maps in such a way that we could compose them.

Thus -- to recap -- we have a collection $\{ \overline{F}(v) \mid v \in V(\mathcal{G}) \}$ of finitely generated free groups and two collections of homomorphisms, $\{ \Phi_e: \overline{F}(i(e)) \to \overline{F}(i(\overline{e})) \mid e \in E(\mathcal{G}) \}$ and $\{ \overline\varphi_j: \overline{F} \to \pi_1(\mathcal{G}) \mid j \in \mathbb{N} \}$ (where $\overline{F} = (\ast_{v \in V(\mathcal{G})} \overline{F}(v) ) * F_E$), such that $\overline\varphi_j(\overline{F}(v)) \subseteq G_v$, $\overline\varphi_j(X_e) = e$ and $\overline\varphi_j(\Phi_e(g)) = \overline\phi_e(\overline\varphi_j(g))$ for all $j \in \mathbb{N}$, $v \in V(\mathcal{G})$, $e \in E(\mathcal{G})$ and $g \in \overline{F}(i(e))$. It follows that for all $e \in E(\mathcal{G})$ and all $g \in \overline{F}(i(e))$ we have $\overline\varphi_j(X_e^{-1} g X_e) = \overline\varphi_j(\Phi_e(g))$ if and only if $e^{-1} \overline\varphi_j(g) e = \overline\phi_e(\overline\varphi_j(g))$, which by Theorem~\ref{t:ftbst} happens if and only if $\overline\varphi_j(g) \in \iota_e(G_e)$.

Now let $\mathcal{K} = \mathcal{K}_E \cup \mathcal{K}_V \cup \mathcal{K}_0$, where
\[
\mathcal{K}_E = \bigcup_{e \in E(\mathcal{G})} \left\{ \Phi_e(g)^{-1} X_e^{-1} g X_e \:\middle|\: g \in K_{\iota_e(G_e)}^{\left(\overline\varphi_j|_{\overline{F}(i(e))}\right)} \right\},
\qquad
\mathcal{K}_V = \bigcup_{v \in V(\mathcal{G})} K_{\{1\}}^{\left(\overline\varphi_j|_{\overline{F}(v)}\right)},
\]
and
\[ \mathcal{K}_0 = \{ X_e \mid e \in E(\mathcal{G}), e=1 \text{ in } \pi_1(\mathcal{G}) \} \cup \{ X_eX_{\overline{e}} \mid e \in E(\mathcal{G}) \}.
\]
We claim that $K^{(\overline\varphi_j)}_{\{1\}} = \langle\!\langle \mathcal{K} \rangle\!\rangle$, the normal closure of $\mathcal{K}$ in $\overline{F}$. Indeed, it is clear from the construction that $\mathcal{K} \subseteq K^{(\overline\varphi_j)}_{\{1\}}$. Conversely, let $g \in K^{(\overline\varphi_j)}_{\{1\}}$, so that $\overline\varphi_j(g) = 1$ $\omega$-almost surely. After inserting letters of $\mathcal{K}_0$ to $g$ (that is, multiplying $g$ by an element of $\langle\!\langle \mathcal{K}_0 \rangle\!\rangle$) if necessary, we may assume that $g$ has the form $g = g_0 X_{e_1} g_1 X_{e_2} \cdots X_{e_n} g_n$ where $g_\ell \in \overline{F}(v_\ell)$ are such that $i(e_\ell) = v_{\ell-1}$ and $i(\overline{e_\ell}) = v_\ell$ for $1 \leq \ell \leq n$, and $v_0 = v_n$. We argue by induction on $n$ that $g \in \langle\!\langle \mathcal{K} \rangle\!\rangle$. Indeed, if $n = 0$ then we have $g \in \mathcal{K}_V \subseteq \langle\!\langle \mathcal{K} \rangle\!\rangle$. Otherwise, it follows from Theorem~\ref{t:ftbst} and from finite additivity of $\omega$ that for some $\ell \in \{1,\ldots,n-1\}$ we $\omega$-almost surely have $e_{\ell+1} = \overline{e_\ell}$ and $\overline\varphi_j(g_\ell) \in \iota_{\overline{e_\ell}}(G_{e_\ell})$. It thus follows that $\Phi_{\overline{e_\ell}}(g_\ell)^{-1} X_{\overline{e_\ell}}^{-1} g_\ell X_{\overline{e_\ell}} \in \mathcal{K}_E$, and so $g$ is equal to $g' = g_0 X_{e_1} g_1 \cdots X_{e_{\ell-1}} [ g_{\ell-1} \Phi_{\overline{e_\ell}}(g_\ell) g_{\ell+1} ] X_{e_{\ell+2}} \cdots X_{e_n} g_n$ modulo $\langle\!\langle \mathcal{K}_E \cup \mathcal{K}_0 \rangle\!\rangle$, where $g_{\ell-1} \Phi_{\overline{e_\ell}}(g_\ell) g_{\ell+1} \in \overline{F}(v_{\ell-1}) = \overline{F}(v_{\ell+1})$. It then follows, by the inductive hypothesis, that $g' \in \langle\!\langle \mathcal{K} \rangle\!\rangle$, and so $g \in \langle\!\langle \mathcal{K} \rangle\!\rangle$ as well. Hence $K^{(\overline\varphi_j)}_{\{1\}} = \langle\!\langle \mathcal{K} \rangle\!\rangle$, as claimed.

Finally, note that we $\omega$-almost surely have $K_{\{1\}}^{\left(\overline\varphi_j|_{\overline{F}(v)}\right)} \subseteq \ker\left(\overline\varphi_j|_{\overline{F}(v)}\right)$ for all $v \in V(\mathcal{G})$ (as $G_v$ is equationally Noetherian) and $K_{\iota_e(G_e)}^{\left(\overline\varphi_j|_{\overline{F}(i(e))}\right)} \subseteq \left(\overline\varphi_j|_{\overline{F}(i(e))}\right)^{-1}(\iota_e(G_e))$ for all $e \in E(\mathcal{G})$ (as $\iota_e(G_e) \leq G_{i(e)}$ is quasi-algebraic). This implies that $\mathcal{K}_V \subseteq \ker(\overline\varphi_j)$ and $\mathcal{K}_E \subseteq \ker(\overline\varphi_j)$ $\omega$-almost surely. As we also have $\mathcal{K}_0 \subseteq \ker(\overline\varphi_j)$ for all $j$ by construction, and as $\ker(\overline\varphi_j)$ is a normal subgroup of $\overline{F}$ for each $j$, it follows that $K^{(\overline\varphi_j)}_{\{1\}} = \langle\!\langle \mathcal{K} \rangle\!\rangle \subseteq \ker(\overline\varphi_j)$ $\omega$-almost surely. This concludes the proof.
\end{proof}

We now combine Theorem~\ref{t:gh} and Proposition~\ref{p:comb} to prove the `if' part of Theorem~\ref{t:comb}\ref{i:tcomb-EN}. To do this, we adapt the first half of the proof of Theorem \cite[Theorem D]{gh} to our scenario.

\begin{prop} \label{p:comb-EN-<=}
Let $\mathcal{G}$ be a finite graph of groups. Suppose that
\begin{enumerate}[label={\textup{(\roman*)}}]
\item \label{i:pcomb-en-ext} for each $e \in E(\mathcal{G})$, the isomorphism $\phi_e = \iota_{\overline{e}} \circ \iota_e^{-1}: \iota_e(G_e) \to \iota_{\overline{e}}(G_e)$ extends to a homomorphism $\overline\phi_e: G_{i(e)} \to G_{i(\overline{e})}$;
\item \label{i:pcomb-en-fin1v} for each $v \in V(\mathcal{G})$, there are only finitely many endomorphisms of the form $\overline\phi_{e_n} \circ \cdots \circ \overline\phi_{e_1}: G_v \to G_v$, where $e_1 \cdots e_n$ ranges over all closed paths in $\mathcal{G}$ starting and ending at $v$;
\item \label{i:pcomb-en-en} for each $v \in V(\mathcal{G})$, the group $G_v$ is equationally Noetherian;
\item \label{i:pcomb-en-qa} for each $e \in E(\mathcal{G})$, the subgroup $\iota_e(G_e) \leq G_{i(e)}$ is quasi-algebraic; and
\item \label{i:pcomb-en-acyl} the action of $\pi_1(\mathcal{G})$ on $\mathcal{T_G}$ is acylindrical.
\end{enumerate}
Then $\pi_1(\mathcal{G})$ is equationally Noetherian.
\end{prop}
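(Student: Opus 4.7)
The plan is to apply the Groves--Hull criterion (Theorem~\ref{t:gh}) to reduce to non-divergent sequences of homomorphisms into $\pi_1(\mathcal{G})$, and then reshape each such sequence into a form that feeds directly into Proposition~\ref{p:comb}.

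First, I dispense with degenerate cases of the action of $\pi_1(\mathcal{G})$ on the Bass--Serre tree $\mathcal{T_G}$. If every orbit of this action is bounded, then, since $\mathcal{T_G}$ is a tree, $\pi_1(\mathcal{G})$ fixes a vertex and is therefore contained in a conjugate of some vertex group $G_v$; equational Noetherianity passes to subgroups. If $\pi_1(\mathcal{G})$ is virtually cyclic, then it is linear (over $\mathbb{Z}$) and thus equationally Noetherian. In any other case the action is non-elementary acylindrical by hypothesis~\ref{i:pcomb-en-acyl}, so $\pi_1(\mathcal{G})$ is acylindrically hyperbolic and Theorem~\ref{t:gh} applies: I must verify that for every finitely generated group $F'$ and every non-divergent sequence $\psi_j: F' \to \pi_1(\mathcal{G})$, $K^{(\psi_j)}_{\{1\}} \subseteq \ker(\psi_j)$ holds $\omega$-almost surely.

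Fix such a sequence $\psi_j$, with $F'$ generated by $\{s_1,\ldots,s_k\}$. Non-divergence provides, $\omega$-almost surely, a vertex $x_j \in \mathcal{T_G}$ with $\max_i d_{\mathcal{T_G}}(x_j, x_j \psi_j(s_i)) \leq C$ for some $C$ independent of $j$. Since $\mathcal{T_G}$ has finitely many $\pi_1(\mathcal{G})$-orbits of vertices, pigeonhole and a conjugation of each $\psi_j$ (which leaves $K^{(\psi_j)}_{\{1\}}$ and $\ker(\psi_j)$ unchanged) allow me to assume $x_j = v_0$ for a single fixed vertex $v_0$, $\omega$-almost surely. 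Theorem~\ref{t:ftbst} then yields expressions
\[
\psi_j(s_i) \;=\; g_{0,i,j}\, e_{1,i,j}\, g_{1,i,j} \cdots e_{n_{i,j},i,j}\, g_{n_{i,j},i,j}
\]
with $n_{i,j} \leq C$ and $e_{\ell,i,j} \in E(\mathcal{G})$. Finitely many pigeonhole refinements (using finite additivity of $\omega$) allow me to assume $\omega$-almost surely that $n_{i,j} = n_i$ and $e_{\ell,i,j} = e_{\ell,i}$ are independent of $j$, while the vertex-group coordinates $g_{\ell,i,j} \in G_{v_{\ell,i}}$ continue to vary.

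Now I match the setup of Proposition~\ref{p:comb}: let $F(v)$ be freely generated by $\{y_{\ell,i} \mid v_{\ell,i} = v\}$ for each $v \in V(\mathcal{G})$, let $F_E$ be freely generated by $\{X_e \mid e \in E(\mathcal{G})\}$, and set $F = (\ast_v F(v)) \ast F_E$. Define $\varphi_j: F \to \pi_1(\mathcal{G})$ by $\varphi_j(y_{\ell,i}) = g_{\ell,i,j}$ and $\varphi_j(X_e) = e$, and $\rho: F' \to F$ by $\rho(s_i) = y_{0,i} X_{e_{1,i}} y_{1,i} \cdots X_{e_{n_i,i}} y_{n_i,i}$. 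Then $\psi_j = \varphi_j \circ \rho$ holds $\omega$-almost surely; the sequence $\varphi_j$ fits the hypotheses of Proposition~\ref{p:comb}, which yields $K^{(\varphi_j)}_{\{1\}} \subseteq \ker(\varphi_j)$ $\omega$-almost surely. Pulling back along $\rho$ gives $K^{(\psi_j)}_{\{1\}} \subseteq \rho^{-1}(K^{(\varphi_j)}_{\{1\}}) \subseteq \rho^{-1}(\ker\varphi_j) = \ker\psi_j$ $\omega$-almost surely, which is precisely what Theorem~\ref{t:gh} demands.

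The main obstacle is the normalization step: leveraging non-divergence and the preliminary conjugation to pin down a single basepoint, and then performing finitely many pigeonhole refinements to fix the length and edge pattern of each normal form. Once this combinatorial data is uniform, the remainder of the argument is a straightforward pullback of the conclusion of Proposition~\ref{p:comb} through $\rho$.
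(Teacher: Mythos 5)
Your approach is the same as the paper's: apply Theorem~\ref{t:gh} to reduce to non-divergent sequences, normalise the sequence by conjugation and finitely many pigeonhole refinements (using finite additivity of $\omega$) to fix the combinatorial pattern of normal forms, build the free group $F = (\ast_v F(v)) * F_E$ and homomorphisms $\varphi_j$ matching the hypotheses of Proposition~\ref{p:comb}, and pull the conclusion back. The handling of the degenerate cases and the pigeonhole normalisation are both correct.

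There is, however, a genuine (if readily fixable) gap in the final pull-back: the map $\rho: F' \to F$ is not well-defined as a group homomorphism. You define $\rho$ on the generators $s_i$ of $F'$ by prescribing words in the free group $F$, but $F'$ is an \emph{arbitrary} finitely generated group and may have relations; since $F$ is free, the images $\rho(s_i)$ would have to satisfy those relations literally (as freely reduced words), which there is no reason to expect. In particular, knowing $\varphi_j(\rho(s_i)) = \psi_j(s_i)$ only places $\rho(r)$ in $\ker\varphi_j$ for each relator $r$ of $F'$, not at the identity of $F$. The paper avoids this by instead defining the analogous map $\alpha$ on a free group $F_S$ with free basis $S$ (so $\alpha$ is automatically well-defined), introducing the canonical surjection $\pi: F_S \to F'$ with $\psi_j \circ \pi = \varphi_j \circ \alpha$ $\omega$-almost surely, and then using surjectivity of $\pi$ to push the containment $K^{(\psi_j \circ \pi)}_{\{1\}} \subseteq \ker(\psi_j \circ \pi)$ down to $K^{(\psi_j)}_{\{1\}} \subseteq \ker\psi_j$. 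Equivalently, you could have observed at the outset that it suffices to verify the Groves--Hull criterion for finitely generated \emph{free} $F'$, by the same surjection argument; either route closes the gap.
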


\begin{proof}
Let $G = \pi_1(\mathcal{G})$ and let $\mathcal{T} = \mathcal{T_G}$ be the corresponding Bass--Serre tree, so that the action of $G$ on $\mathcal{T}$ is acylindrical by \ref{i:pcomb-en-acyl}. We may assume that $G$ is not virtually cyclic (otherwise $G$ is equationally Noetherian since it is linear, for instance) and that $\mathcal{T}$ is unbounded (otherwise $G = G_v$ for some $v \in V(\mathcal{G})$, and so it is equationally Noetherian by assumption \ref{i:pcomb-en-en}). As the action of $G$ on $\mathcal{T}$ is cobounded, these two assumptions imply that it is also non-elementary; in particular, $G$ is acylindrically hyperbolic.

As $\mathcal{T}$ is a simplicial tree, so $0$-hyperbolic, we may use Theorem~\ref{t:gh} to show that $G$ is equationally Noetherian. Thus, let $F$ be a group with a finite generating set $S$, and let $(\varphi_j: F \to G)_{j \in \mathbb{N}}$ be a non-divergent sequence of homomorphisms.

We fix an edge $e \in E(\mathcal{G})$, and let $x_0 \in \mathcal{T}$ be the midpoint of the edge $\widetilde{e} \in E(\mathcal{T})$ which maps to $e$ under the quotient map $\mathcal{T} \to \mathcal{T}/G$ and whose stabiliser is $G_e$. Since the action of $G$ on $\mathcal{T}$ is cobounded, we may assume that, after conjugating each $\varphi_j$ if necessary, we have $\lim_\omega \max_{s \in S} d_\mathcal{T}(x_0,x_0 \cdot \varphi_j(s)) = n' < \infty$. Since $\omega$ is finitely additive and $S$ is finite, it follows that there exist $\{ n_s' \mid s \in S \} \subseteq \{0,\ldots,n'\}$ such that $d_\mathcal{T}(x_0,x_0 \cdot \varphi_j(s)) = n_s'$ for every $s \in S$ $\omega$-almost surely. Again, since $\omega$ is finitely additive and $S$, $V(\mathcal{G})$ and $E(\mathcal{G})$ are all finite, this implies that for each $s \in S$ there exist $n_s \in \mathbb{N}$, vertices $v_{s,0},\ldots,v_{s,n_s} \in V(\mathcal{G})$ and edges $e_{s,1},\ldots,e_{s,n_s} \in E(\mathcal{G})$ such that we $\omega$-almost surely have
\[
\varphi_j(s) = g_{s,0,j} e_{s,1} g_{s,1,j} \cdots e_{s,n_s} g_{s,n_s,j}
\]
with $g_{s,\ell,j} \in G_{v_{s,\ell}}$ for each $\ell$.

Now for each $v \in V(\mathcal{G})$, let $\widehat{S}(v) = \{ (s,\ell) \mid v_{s,\ell} = v \}$, and let $\widehat{F}(v)$ be a free group with free basis $\{ Y_{(s,\ell)} \mid (s,\ell) \in \widehat{S}(v) \}$. Let $\widehat{F} = \left( \ast_{v \in V(\mathcal{G})} \widehat{F}(v) \right) * F_E$, where $F_E$ is a free group with free basis $\{ X_e \mid e \in E(\mathcal{G}) \}$. We define a sequence of homomorphisms $(\widehat\varphi_j: \widehat{F} \to G)_{j \in \mathbb{N}}$ by setting $\widehat\varphi_j(Y_{(s,\ell)}) = g_{s,\ell,j}$ and $\widehat\varphi_j(X_e) = e$, so that $\widehat\varphi_j$ is well-defined $\omega$-almost surely. It then follows by Proposition~\ref{p:comb} that $K_{\{1\}}^{(\widehat\varphi_j)} \subseteq \ker(\widehat\varphi_j)$ $\omega$-almost surely.

Now define a map $\alpha: S \to \widehat{F}$ by setting $\alpha(s) = Y_{(s,0)} X_{e_{s,1}} Y_{(s,1)} \cdots X_{e_{s,n_s}} Y_{(s,n_s)}$, and extend it to a map $\alpha: F_S \to \widehat{F}$, where $F_S$ is a free group with free basis $S$. Let $\pi: F_S \to F$ be the canonical surjection. It then follows from the construction that $\varphi_j \circ \pi = \widehat\varphi_j \circ \alpha$ $\omega$-almost surely. Now as $K_{\{1\}}^{(\widehat\varphi_j)} \subseteq \ker(\widehat\varphi_j)$ $\omega$-almost surely, we also have $K_{\{1\}}^{(\varphi_j \circ \pi)} = \alpha^{-1} \left( K_{\{1\}}^{(\widehat\varphi_j)} \right) \subseteq \alpha^{-1} \left( \ker(\widehat\varphi_j) \right) = \ker(\varphi_j \circ \pi)$ $\omega$-almost surely. As $\pi$ is surjective, this implies that $K_{\{1\}}^{(\varphi_j)} \subseteq \ker(\varphi_j)$ $\omega$-almost surely. Thus, by Theorem~\ref{t:gh}, $G$ is equationally Noetherian, as required.
\end{proof}

\subsection{Necessary conditions and remarks} \label{ss:comb-rem}

Finally, we complete the proof of Theorem~\ref{t:comb} -- that is, prove the implications ($\Rightarrow$) -- and make a couple of remarks.

\begin{lem} \label{l:comb-=>}
Let $\mathcal{G}$ be a finite graph of groups. Suppose that for each $e \in E(\mathcal{G})$, the isomorphism $\phi_e = \iota_{\overline{e}} \circ \iota_e^{-1}: \iota_e(G_e) \to \iota_{\overline{e}}(G_e)$ extends to a homomorphism $\overline\phi_e: G_{i(e)} \to G_{i(\overline{e})}$. Then the following hold.
\begin{enumerate}[label=\textup{(\alph*)}]
\item \label{i:lcomb-RF} If $\pi_1(\mathcal{G})$ is residually finite, then $\iota_e(G_e)$ is separable in $G_{i(e)}$ for all $e \in E(\mathcal{G})$.
\item \label{i:lcomb-EN} If $\pi_1(\mathcal{G})$ is equationally Noetherian, then $\iota_e(G_e)$ is quasi-algebraic in $G_{i(e)}$ for all $e \in E(\mathcal{G})$.
\end{enumerate}
\end{lem}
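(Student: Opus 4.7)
The proof plan hinges on a single membership criterion: for any $e \in E(\mathcal{G})$ and any $g \in G_{i(e)}$, one has $g \in \iota_e(G_e)$ if and only if the element $w_e(g) := \overline{e} \cdot g \cdot e \cdot \overline\phi_e(g)^{-1}$ is trivial in $\pi_1(\mathcal{G})$. The forward implication is immediate from the Bass--Serre relation $\overline{e}\iota_e(h)e = \iota_{\overline{e}}(h)$ together with the fact that $\overline\phi_e$ restricts to $\phi_e = \iota_{\overline{e}} \circ \iota_e^{-1}$ on $\iota_e(G_e)$. For the converse, I would view $w_e(g)$ as a closed loop of length two based at $i(\overline{e})$, of the shape $g_0 e_1 g_1 e_2 g_2$ with $g_0 = 1$, $e_1 = \overline{e}$, $g_1 = g$, $e_2 = e$, $g_2 = \overline\phi_e(g)^{-1}$, and apply the Normal Form Theorem (Theorem~\ref{t:ftbst}); the only available index of cancellation is $j = 1$, and it forces $g \in \iota_{\overline{e_1}}(G_{e_1}) = \iota_e(G_e)$.

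For part \ref{i:lcomb-RF}, I would fix $g \in G_{i(e)} \setminus \iota_e(G_e)$; by the criterion $w_e(g) \neq 1$ in $\pi_1(\mathcal{G})$, so residual finiteness yields a finite quotient $\psi: \pi_1(\mathcal{G}) \to Q$ with $\psi(w_e(g)) \neq 1$. I would then package $\psi$ and its twist through $\overline\phi_e$ into a single homomorphism $\Psi: G_{i(e)} \to Q \times Q$ given by $\Psi(x) = (\psi(x), \psi(\overline\phi_e(x)))$, and observe that the ``graph'' subgroup
\[
D = \{ (y, \psi(e)^{-1} y \psi(e)) \mid y \in Q \} \leq Q \times Q
\]
contains $\Psi(\iota_e(G_e))$ (by the Bass--Serre relation again), while unpacking $\psi(w_e(g)) \neq 1$ gives exactly $\Psi(g) \notin D$. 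Hence $\Psi(g) \notin \Psi(\iota_e(G_e))$, exhibiting the required finite quotient.

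For part \ref{i:lcomb-EN}, given $S \subseteq F_n(X_1,\ldots,X_n)$, I would introduce fresh variables $Y_1,\ldots,Y_n,Z_1,\ldots,Z_n,W_1,W_2$ and, for each $s \in S$, define the coefficient-free equation
\[
\widetilde{s} = W_2 \cdot s(Y_1,\ldots,Y_n) \cdot W_1 \cdot s(Z_1,\ldots,Z_n)^{-1} \in F_{2n+2}.
\]
Applying equational Noetherianity of $\pi_1(\mathcal{G})$ to $T = \{\widetilde{s} \mid s \in S\}$ yields a finite subsystem $T_0 \subseteq T$ with $V_{\pi_1(\mathcal{G})}(T_0) = V_{\pi_1(\mathcal{G})}(T)$, to which corresponds a finite $S_0 \subseteq S$. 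Using that $\overline\phi_e$ is a homomorphism, so that $\overline\phi_e(s(g_1,\ldots,g_n)) = s(\overline\phi_e(g_1),\ldots,\overline\phi_e(g_n))$, the membership criterion rephrases as
\[
s(g_1,\ldots,g_n) \in \iota_e(G_e) \iff \widetilde{s}(g_1,\ldots,g_n,\overline\phi_e(g_1),\ldots,\overline\phi_e(g_n),e,\overline{e}) = 1;
\]
specialising the identity $V_{\pi_1(\mathcal{G})}(T_0) = V_{\pi_1(\mathcal{G})}(T)$ at this particular tuple then delivers the nontrivial inclusion $V_{G_{i(e)},\iota_e(G_e)}(S_0) \subseteq V_{G_{i(e)},\iota_e(G_e)}(S)$, the reverse inclusion being obvious.

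The main technical content is the converse direction of the membership criterion, but this reduces to a direct application of the Normal Form Theorem to a short loop; everything after that is bookkeeping. In particular, no use is made of hypothesis \ref{i:tcomb-fin1v} or of acylindricity, matching the author's remark that these assumptions are only needed for the $(\Leftarrow)$ implications.
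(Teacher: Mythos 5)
Your proof is correct and follows essentially the same strategy as the paper: reduce membership in $\iota_e(G_e)$ to the triviality of a word in $\pi_1(\mathcal{G})$ built from $e$, $\overline{e}$, $g$ and $\overline\phi_e(g)$, then transfer residual finiteness (respectively equational Noetherianity) of $\pi_1(\mathcal{G})$ down to separability (respectively quasi-algebraicity) of the edge subgroup. A small improvement in your version: by cyclically permuting the test word so that $g$ itself sits in the middle position, the converse direction of the membership criterion drops straight out of Theorem~\ref{t:ftbst} with no case analysis (the paper's choice of word requires a two-case argument), and in part~\ref{i:lcomb-RF} you construct the separating finite quotient $G_{i(e)} \to Q \times Q$ directly, where the paper instead argues by contradiction.
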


\begin{proof}
We first claim that given any $e \in E(\mathcal{G})$ and any $h \in G_{i(e)}$, we have $h \in \iota_e(G_e)$ if and only if $e \overline\phi_e(h)^{-1} e^{-1} h = 1$. Indeed, the implication ($\Rightarrow$) is clear. Conversely, suppose that $h \notin \iota_e(G_e)$. If $\overline\phi_e(h) \notin \iota_{\overline{e}}(G_e)$ then it follows from Theorem~\ref{t:ftbst} that $e \overline\phi_e(h)^{-1} e^{-1} h \neq 1$; otherwise, we have $e\overline\phi_e(h)e^{-1} \in \iota_e(G_e)$ but $h \notin \iota_e(G_e)$, and so again $\left( e \overline\phi_e(h) e^{-1} \right)^{-1} h$ is non-trivial. This proves the claim.

\begin{enumerate}[label=(\alph*)]

\item Suppose for contradiction that $\iota_e(G_e)$ is not separable in $G_{i(e)}$ for some $e \in E(\mathcal{G})$, and let $h \in G_{i(e)} \setminus \iota_e(G_e)$ be an element such that $hK \in \iota_e(G_e)K / K$ for any finite-index normal subgroup $K \unlhd G_{i(e)}$. Consider the element $g = e\overline\phi_e(h)^{-1}e^{-1}h \in \pi_1(\mathcal{G})$: it is non-trivial by the claim above.

Let $N \unlhd \pi_1(\mathcal{G})$ be a normal finite-index subgroup. Then the groups $K_{i(e)} := N \cap G_{i(e)}$ and $K_{i(\overline{e})} := N \cap G_{i(\overline{e})}$ are finite-index normal subgroups of $G_{i(e)}$ and $G_{i(\overline{e})}$, respectively, and so $K := K_{i(e)} \cap (\overline\phi_e)^{-1}(K_{i(\overline{e})})$ is a finite-index normal subgroup of $G_{i(e)}$. We thus have $hK = h'K$ for some $h' \in \iota_e(G_e)$; therefore, $e^{-1}h'e = \overline\phi_e(h')$, and so $gK = e\overline\phi_e(h)^{-1}e^{-1}h'K = e\overline\phi_e(h^{-1}h')e^{-1}K$. But as $h^{-1}h' \in K \subseteq (\overline\phi_e)^{-1}(K_{i(\overline{e})})$, it follows that $\overline\phi_e(h^{-1}h') \in K_{i(\overline{e})} \subseteq N$. Hence, as $K \subseteq N$ and as $N$ is normal, we have $gN = e\overline\phi_e(h^{-1}h')e^{-1}N = N$. This contradicts the fact that $\pi_1(\mathcal{G})$ is residually finite; thus $\iota_e(G_e)$ must be separable in $G_{i(e)}$ for all $e \in E(\mathcal{G})$.

\item Suppose that $\pi_1(\mathcal{G})$ is equationally Noetherian, let $e \in E(\mathcal{G})$, and let $S \subseteq F_n$ be a system of equations. Consider the system of equations $\overline{S} = \{ \Xi(s) \mid s \in S \} \subseteq F_{2n+1}(X_1,\ldots,X_{2n+1})$, where
\[
\Xi(s) = X_{2n+1} s(X_1,\ldots,X_n)^{-1} X_{2n+1}^{-1} s(X_{n+1},\ldots,X_{2n})
\]
for any $s \in F_n$. It then follows from the claim above that for any $g_1,\ldots,g_n \in G_{i(e)}$, we have $(g_1,\ldots,g_n) \in V_{G_{i(e)},\iota_e(G_e)}(S)$ if and only if $(\overline\phi_e(g_1),\ldots,\overline\phi_e(g_n),g_1,\ldots,g_n,e) \in V_{\pi_1(\mathcal{G})}(\overline{S})$. But as $\pi_1(\mathcal{G})$ is equationally Noetherian, there exists a finite subset $\overline{S}_0 = \{ \Xi(s) \mid s \in S_0 \}$ of $\overline{S}$ such that $V_{\pi_1(\mathcal{G})}(\overline{S}) = V_{\pi_1(\mathcal{G})}(\overline{S}_0)$, where $S_0$ is some finite subset of $S$. It follows that $V_{G_{i(e)},\iota_e(G_e)}(S) = V_{G_{i(e)},\iota_e(G_e)}(S_0)$; hence $\iota_e(G_e)$ is quasi-algebraic in $G_{i(e)}$. \qedhere

\end{enumerate}
\end{proof}

\begin{proof}[Proof of Theorem~\ref{t:comb}]
The implications ($\Leftarrow$) follow from Proposition~\ref{p:comb-RF-<=} for \ref{i:tcomb-RF}, and from Proposition~\ref{p:comb-EN-<=} for \ref{i:tcomb-EN}.

The implications ($\Rightarrow$) follow from Lemma~\ref{l:comb-=>} and from the fact that subgroups of residually finite (respectively equationally Noetherian) groups are residually finite (respectively equationally Noetherian).
\end{proof}

\begin{rmk}
In both parts \ref{i:tcomb-RF} and \ref{i:tcomb-EN} of Theorem~\ref{t:comb}, we have actually proved the implication ($\Rightarrow$) in more generality than stated. In particular, neither of the proofs for these implications uses the assumption \ref{i:tcomb-fin1v} in Theorem~\ref{t:comb} -- and the acylindricity assumption in part \ref{i:tcomb-EN} is not used either.
\end{rmk}

\begin{rmk}
In general, the assumption on the existence of maps $\overline\phi_e$ defined in Lemma~\ref{l:comb-=>} cannot be dropped. For instance, consider the group $G = \langle a,b,t \mid a^t = a^2 \rangle$. It can be expressed as an amalgamated free product of the subgroups $H = \langle a,t \rangle \cong BS(1,2)$ and $K = \langle a,b \rangle \cong F_2$ with amalgamated subgroup $L = \langle a \rangle \cong \mathbb{Z}$, as well as a free product of $H$ and $\langle b \rangle \cong \mathbb{Z}$. The latter expression implies that $G$ is both residually finite and equationally Noetherian (as it is a free product of two finitely generated linear groups, for example).

However, it is easy to check that $tat^{-1} \notin L$ but $tat^{-1}N \in LN/N$ for every finite-index normal subgroup $N \unlhd H$, showing that $L$ is not separable in $H$. Likewise, if we set $s_n = X_1^n X_2 X_1^{-n} \in F_2(X_1,X_2)$ for $n \in \mathbb{N}$, we have $s_n(t,a^{2^m}) \in L$ if and only if $m \geq n$, and so for any finite subset $S_0 \subset S := \{ s_n \mid n \in \mathbb{N} \}$ there exists $m \in \mathbb{N}$ such that $(t,a^{2^m}) \in V_{H,L}(S_0) \setminus V_{H,L}(S)$; thus $L$ is not quasi-algebraic in $H$.

It follows that, with respect to the splitting $G \cong H \ast_L K$, neither statement~\ref{i:lcomb-RF} nor statement~\ref{i:lcomb-EN} in Lemma~\ref{l:comb-=>} is true. Note also that the action of $G$ on the Bass--Serre tree $\mathcal{T}$ corresponding to this splitting is acylindrical: indeed, using the splitting $G \cong H \ast \langle b \rangle$, Theorem~\ref{t:ftbst} implies that $H \cap H^g = \{1\}$ for every $g \in G \setminus H$, which implies acylindricity (of the action on $\mathcal{T}$) by \cite[Lemma~4.2]{mo}.
\end{rmk}

\bibliographystyle{amsalpha}
\bibliography{../../../all}

\end{document}